\documentclass[11pt]{amsart}
\usepackage[british]{babel}

\usepackage{amssymb,latexsym}
\usepackage{comment}
\usepackage{titletoc}
\usepackage{amsfonts, amsmath, amsthm, amssymb, fancybox, graphicx, color, times, babel,mathrsfs}
\usepackage[utf8]{inputenc}
\usepackage{multirow}
\usepackage{enumitem}
\usepackage[numbers,sort&compress]{natbib}

\usepackage[usenames,dvipsnames]{xcolor}

\usepackage[all,cmtip]{xy}
\hyphenation{ambient}
\usepackage[normalem]{ulem}
\usepackage{cancel}

\usepackage{tikz-cd}

\usepackage{a4wide}

\newtheorem {theorem}{Theorem}
\newtheorem {corollary}[theorem]{Corollary}

\newtheorem {lemma}[theorem]{Lemma}

\newtheorem {proposition}[theorem]{Proposition}

\theoremstyle{definition}
\newtheorem {remark}[theorem]{Remark}
\newtheorem {definition}[theorem]{Definition}

\numberwithin{theorem}{section}

\DeclareMathOperator{\disc}{disc}

\newcommand{\Q}{\mathbb{Q}}

\newcommand{\Z}{\mathbb{Z}}

\setcounter{tocdepth}{1}

\renewcommand{\geq}{\geqslant}
\renewcommand{\leq}{\leqslant}

\makeatletter
\newcommand{\customlabel}[2]{
   \protected@write \@auxout {}{\string \newlabel {#1}{{#2}{\thepage}{#2}{#1}{}} }
   \hypertarget{#1}{}
}
\makeatother

\usepackage{xr-hyper}
\usepackage{hyperref}

\date{}

\begin{document}
\author[I. Del Corso]{Ilaria Del Corso$^1$}\thanks{$^{[1]}$ The authors have been partially supported by MIUR (Italy) through   PRIN 2017  ``Geometric, algebraic and
    analytic methods in arithmetic'', and by Universit\`a di Pisa through  PRA 2018-19 ``Spazi di moduli, rappresentazioni e strutture combinatorie''.   
     } 
\address[]{ \parbox{\linewidth}{Ilaria del Corso \\ Dipartimento di Matematica, Università di Pisa, Largo Bruno Pontecorvo 5, 56127 Pisa, Italy \vspace{0.15cm}}}
\email{ilaria.delcorso@unipi.it}

\author[F. Ferri]{Fabio Ferri$^2$}
\thanks{$^{[2]}$The author gratefully acknowledges the funding of his PhD studentship by the University of Exeter.}
\address[]{\parbox{\linewidth}{Fabio Ferri \newline Department of Mathematics, University of Exeter, Rennes Dr, Exeter EX4 4RN, United Kingdom\vspace{0.15cm}}}
\email{ff263@exeter.ac.uk}

\author[D. Lombardo]{Davide Lombardo$^1$}
\address[]{\parbox{\linewidth}{Davide Lombardo \newline Dipartimento di Matematica, Università di Pisa, Largo Bruno Pontecorvo 5, 56127 Pisa, Italy\vspace{0.15cm}}}
\email{davide.lombardo@unipi.it}

\title{How far is an extension of $p$-adic fields from having a normal integral basis?}

\begin{abstract}
Let $L/K$ be a finite Galois extension of $p$-adic fields with group $G$. It is well-known that $\mathcal{O}_L$ contains a free $\mathcal{O}_K[G]$-submodule of finite index. We study the \textit{minimal} index of such a free submodule, and determine it exactly in several cases, including for any cyclic extension of degree $p$ of $p$-adic fields.
\end{abstract}

\maketitle

\section{Introduction}
Let $L/K$ be a Galois extension of $p$-adic fields with Galois group $G$, ramification index $e_{L/K}$ and inertia degree $f_{L/K}$, and let $\mathcal{O}_K$ (respectively $\mathcal{O}_L$) denote the ring of integers of $K$ (resp.~$L$). 
For any $p$-adic field $F$, let $e_F$ and $f_F$ be the absolute ramification index and inertia degree, $\pi_F$ a uniformiser, and $v_F$ the valuation, normalised by $v_F(\pi_F)=1$.
We will write $K[G]$ for the $K$-algebra whose elements are formal linear combinations of elements of $G$ with $K$-coefficients. We similarly define the group ring $\mathcal{O}_K[G]$ and observe that $L$ is in a natural way a $K[G]$-module (resp.~$\mathcal{O}_L$ is a $\mathcal{O}_K[G]$-module).
The classical normal basis theorem shows that in this setting $L$ is a free $K[G]$-module of rank 1. It is then natural to ask whether $\mathcal{O}_L$ is also a free $\mathcal{O}_K[G]$-module of rank 1. The answer to this question is given by the following result.
\begin{theorem}[{\cite{MR1581331}, \cite[Theorem 3 on p.~26]{MR717033}}]\label{tamepadic}
 Let $L/K$ be a Galois extension of $p$-adic fields with Galois group $G$. Then $\mathcal{O}_L$ is free of rank $1$ as an $\mathcal{O}_K[G]$-module if and only if the extension is tamely ramified.
\end{theorem}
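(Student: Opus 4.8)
I would prove the two implications separately, beginning with the easier one: if $\mathcal{O}_L$ is free of rank $1$ over $\mathcal{O}_K[G]$, then $L/K$ is tame. Such a module is in particular isomorphic to $\mathcal{O}_K[G]$ as a $G$-module, so functoriality of Tate cohomology gives $\widehat{H}^0(G,\mathcal{O}_L)\cong\widehat{H}^0(G,\mathcal{O}_K[G])$. The right-hand side vanishes: the $G$-invariants of $\mathcal{O}_K[G]$ are the $\mathcal{O}_K$-multiples of $\Sigma:=\sum_{g\in G}g$, while the norm element sends $x$ to $\Sigma x=\varepsilon(x)\Sigma$ (with $\varepsilon$ the augmentation), so its image is again $\mathcal{O}_K\Sigma$. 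Since $\widehat{H}^0(G,\mathcal{O}_L)=\mathcal{O}_K/\Tr_{L/K}(\mathcal{O}_L)$, this says exactly that $\Tr_{L/K}\colon\mathcal{O}_L\to\mathcal{O}_K$ is surjective. I would then conclude with the classical identity $\Tr_{L/K}(\mathcal{O}_L)=\m_K^{\lfloor v_L(\mathfrak{d}_{L/K})/e_{L/K}\rfloor}$ together with $v_L(\mathfrak{d}_{L/K})\ge e_{L/K}-1$, an equality precisely when $L/K$ is tamely ramified: surjectivity of the trace forces the equality.

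For the converse, suppose $L/K$ is tame; the plan is to show that $\mathcal{O}_L$ is a cohomologically trivial $G$-module and then upgrade this to freeness of rank $1$. Cohomological triviality may be tested on a Sylow $q$-subgroup $G_q$ for each prime $q$, where it suffices that two consecutive Tate cohomology groups vanish. For $q\ne p$ this is immediate, since $|G_q|$ is a unit in $\mathcal{O}_K$ and so each $\widehat{H}^i(G_q,\mathcal{O}_L)$ is an $\mathcal{O}_K$-module annihilated by a unit, hence zero. For $q=p$, tameness forces the inertia subgroup $I\subseteq G$ to have order $e_{L/K}$ prime to $p$, so $G_p$ injects into $G/I\cong\gal(\ell_L/\ell_K)$ (residue fields), which is cyclic, being the Galois group of an extension of finite fields; thus $G_p$ is cyclic and $L/L^{G_p}$ is unramified. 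In the unramified case $\mathcal{O}_L$ is $\mathcal{O}_{L^{G_p}}[G_p]$-free of rank $1$ — lift a normal basis of the residue extension to $\mathcal{O}_L$ and apply Nakayama, using $\m_{L^{G_p}}\mathcal{O}_L=\m_L$ — and $\mathcal{O}_{L^{G_p}}[G_p]$, being induced up from the trivial subgroup, is cohomologically trivial, hence so is the free module $\mathcal{O}_L$. Therefore $\mathcal{O}_L$ is cohomologically trivial over each $G_q$, and so over $G$.

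It remains to pass from cohomological triviality to freeness. A standard homological result says that a finitely generated $\mathcal{O}_K[G]$-module which is $\mathcal{O}_K$-free and cohomologically trivial is $\mathcal{O}_K[G]$-projective; so $\mathcal{O}_L$ is projective, of ``rank $1$'' because $\mathcal{O}_L\otimes_{\mathcal{O}_K}K\cong K[G]$ by the classical normal basis theorem. To conclude it is genuinely free I would use that $\mathcal{O}_K$ is a complete discrete valuation ring, so $\mathcal{O}_K[G]$ is semiperfect: finitely generated projectives then satisfy Krull--Schmidt, and such a module is free exactly when its reduction modulo the Jacobson radical is free over the semisimple quotient $\mathcal{O}_K[G]/\rad\cong\ell_K[G]/\rad(\ell_K[G])$; since $\mathcal{O}_L/\m_K\mathcal{O}_L$ and $\ell_K[G]$ have the same $\ell_K$-dimension $|G|$, a short dimension count then gives $\mathcal{O}_L\cong\mathcal{O}_K[G]$. (Equivalently one can invoke the vanishing of the locally free class group of $\mathcal{O}_K[G]$ for a $p$-adic ring of integers $\mathcal{O}_K$.) I expect the crux to be the $p$-part of cohomological triviality, since that is where the tameness hypothesis really enters, via the structure of tame Galois groups of $p$-adic fields (metacyclic, with cyclic Sylow $p$-subgroup fixing an unramified subextension); the projective-implies-free step is standard machinery but not elementary. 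A more hands-on route for the ``if'' direction instead exhibits explicit generators — in the totally tamely ramified case $1+\pi_L+\cdots+\pi_L^{e_{L/K}-1}$, whose Galois conjugates relate to the $\mathcal{O}_K$-basis $\{\pi_L^i\}$ by a Vandermonde matrix in the $e_{L/K}$-th roots of unity, with unit determinant since $p\nmid e_{L/K}$ — and then amalgamates this with the unramified case along $K\subseteq L^{I}\subseteq L$; it is exactly this amalgamation, where $G$ need not be abelian, that has to be handled with care.
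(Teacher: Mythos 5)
This theorem is not proved in the paper at all: it is Noether's criterion, quoted with references to Noether and to Fr\"ohlich's book, so there is no internal argument to compare yours against. What you propose is the classical proof found in those sources (and in Serre's \emph{Corps Locaux}): for ``free $\Rightarrow$ tame'', freeness forces $\widehat{H}^0(G,\mathcal{O}_L)=\mathcal{O}_K/\Tr_{L/K}(\mathcal{O}_L)=0$, and surjectivity of the trace combined with $\Tr_{L/K}(\mathcal{O}_L)=\mathfrak{m}_K^{\lfloor v_L(\mathfrak{d}_{L/K})/e_{L/K}\rfloor}$ and $v_L(\mathfrak{d}_{L/K})\ge e_{L/K}-1$ (equality iff tame) gives tameness; for ``tame $\Rightarrow$ free'', cohomological triviality checked on Sylow subgroups (trivially for $q\ne p$, and via the unramified subextension $L/L^{G_p}$ for $q=p$, where tameness is genuinely used), then projectivity of the $\mathcal{O}_K$-free cohomologically trivial module, then freeness from rank $1$. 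All of this is correct and is essentially the argument the cited literature gives.

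The one step you should tighten is the very last one. Over the complete local order $\mathcal{O}_K[G]$ a finitely generated projective module is indeed determined by its quotient modulo the Jacobson radical, but the quotient $\mathcal{O}_L/\mathfrak{m}_K\mathcal{O}_L$ is \emph{not} that radical quotient (the radical of $\mathcal{O}_K[G]$ strictly contains $\mathfrak{m}_K\mathcal{O}_K[G]$ whenever $p\mid |G|$), and merely matching $\dim_{k_K}\mathcal{O}_L/\mathfrak{m}_K\mathcal{O}_L=|G|=\dim_{k_K}k_K[G]$ does not identify the tops: distinct indecomposable projectives can have reductions of equal dimension. Likewise the aside about the locally free class group begs the question, since one must first know the module is locally free. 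The clean way to finish is the standard fact for orders over a complete discrete valuation ring (Curtis--Reiner, \S 18, in the same circle of results as the Theorem 18.10 this paper invokes elsewhere): two finitely generated projective $\mathcal{O}_K[G]$-modules with isomorphic $K$-spans are isomorphic. Applied to $\mathcal{O}_L$ and $\mathcal{O}_K[G]$, together with the normal basis isomorphism $\mathcal{O}_L\otimes_{\mathcal{O}_K}K\cong K[G]$ that you already established, this yields $\mathcal{O}_L\cong\mathcal{O}_K[G]$ and completes the proof.
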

More generally, one may ask whether $\mathcal{O}_L$ is free over other subrings of $K[G]$. It turns out that there is a natural candidate for a ring over which $\mathcal{O}_L$ could be free, the so-called \textbf{associated order}.

\begin{definition}
The associated order (of $\mathcal{O}_L$ in $K[G]$) is the ring
\[
\mathfrak{A}_{L/K} = \{ \lambda \in K[G] : \lambda \cdot x \in \mathcal{O}_L \; \forall x \in \mathcal{O}_L \}.
\]
\end{definition}
It is well-known that the associated order is indeed an $\mathcal{O}_K$-order in $K[G]$. Furthermore, if $\mathcal{O}_L$ is free of rank 1 over an $\mathcal{O}_K$-order $\Lambda$ in $K[G]$, then necessarily $\Lambda=\mathfrak{A}_{L/K}$ (see Proposition \ref{associatedfree}).
In our setting, we have that $\mathfrak{A}_{L/K}$ coincides with $ \mathcal{O}_K[G]$ precisely  when $L/K$ is at most tamely ramified.

In this paper, we investigate the situation when $\mathcal{O}_L$ is \emph{not} free over $\mathcal{O}_K[G]$. More specifically, we measure the failure of freeness by studying the minimal index of a free $\mathcal{O}_K[G]$-module inside $\mathcal{O}_L$, namely the quantity
\[
m(L/K) := \min_{\alpha \in \mathcal{O}_L} [\mathcal{O}_L : \mathcal{O}_K[G] \alpha],
\] 
where $[\mathcal{O}_L : \mathcal{O}_K[G]\alpha]$ denotes the group index. This quantity is well defined since for every integral normal basis generator $\alpha$ of the extension the index $[\mathcal{O}_L : \mathcal{O}_K[G]\alpha]$ is finite.
Clearly, by Theorem \ref{tamepadic}, we have $m(L/K)=1$ if and only if the extension is at most tamely ramified. Notice that, even though this is not clear from the definition, $m(L/K)$ is in principle computable for any finite extension $L/K$: this is the content of Theorem \ref{thm:mLKEffectivelyComputable}.
In certain settings it may be more natural to consider indices of the form $[\mathcal{O}_L : \mathfrak{A}_{L/K} \alpha]$, but, as we will see, these differ from $[\mathcal{O}_L : \mathcal{O}_K[G] \alpha]$ only by a constant factor independent of $\alpha$ (cf.~Proposition \ref{productindex}). On the other hand, the proportionality factor $[\mathfrak{A}_{L/K} : \mathcal{O}_K[G]]$ also carries interesting arithmetic content, so that $m(L/K)$ might in fact capture more complete information than the indices $[\mathcal{O}_L : \mathfrak{A}_{L/K} \alpha]$.

It is not the first time that the quantity $m(L/K)$ appears in the literature, in more or less explicit form. For example, in \cite{MR3411126} Johnston computed an explicit free generator of the ring of integers over its associated order in any wildly and weakly ramified Galois extension $L/K$ of local fields (for some earlier results  see also Burns \cite{MR1760494}); a crucial ingredient in his approach is precisely the computation of $m(L/K)$ in this situation.

\begin{theorem}[{\cite[Proof of Theorem 1.2]{MR3411126}}]\label{thm:JohnstonWeaklyRamified}
 Let $L/K$ be a wildly and weakly ramified Galois extension of $p$-adic fields. Then
$
  m(L/K)=p^{f_L}.
$
\end{theorem}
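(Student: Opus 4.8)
The plan is to factor $m(L/K)$ through the associated order $\mathfrak{A}_{L/K}$. By Proposition \ref{productindex}, every normal integral generator $\alpha$ satisfies $[\mathcal{O}_L : \mathcal{O}_K[G]\alpha] = [\mathfrak{A}_{L/K} : \mathcal{O}_K[G]]\cdot[\mathcal{O}_L : \mathfrak{A}_{L/K}\alpha]$, so minimising over $\alpha$ gives
\[
m(L/K) = [\mathfrak{A}_{L/K} : \mathcal{O}_K[G]] \cdot \min_{\alpha \in \mathcal{O}_L}[\mathcal{O}_L : \mathfrak{A}_{L/K}\alpha].
\]
The second factor is a positive integer (since $\alpha\in\mathcal{O}_L$ forces $\mathfrak{A}_{L/K}\alpha\subseteq\mathcal{O}_L$), and it equals $1$ precisely when $\mathcal{O}_L$ is a free $\mathfrak{A}_{L/K}$-module of rank $1$. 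Thus I would split the theorem into two assertions: (i) $\mathcal{O}_L$ is $\mathfrak{A}_{L/K}$-free, and (ii) $[\mathfrak{A}_{L/K}:\mathcal{O}_K[G]]=p^{f_L}$.

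For (i) I would invoke the known Galois module structure of weakly ramified extensions, namely that $\mathcal{O}_L$ is $\mathfrak{A}_{L/K}$-free in this case (this goes back to Erez's study of the square root of the inverse different and its comparison with $\mathcal{O}_L$, with the residue characteristic $2$ situation treated separately, and it is also implicit in Johnston's explicit generator). Alternatively — to keep things self-contained — I would prove (i) together with the upper bound $m(L/K)\le p^{f_L}$ by writing down an explicit $\alpha$: in the basic case of a totally ramified cyclic degree-$p$ extension one can take $\alpha$ to be a suitable unit multiple of a uniformiser $\pi_L$ and check by hand that $[\mathcal{O}_L:\mathcal{O}_K[G]\alpha]=p^{f_L}$, the general case following by the reduction described below.

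For (ii) the strategy is a direct valuation-theoretic description of $\mathfrak{A}_{L/K}$. First I would reduce to a totally wildly ramified extension: the maximal unramified subextension $L_0/K$ and the tame part of the inertia group are controlled by Theorem \ref{tamepadic} (the corresponding layers of the tower give induced modules and leave the relevant indices unchanged), so it suffices to treat the totally ramified, weakly ramified extension $L/L_0$, whose wild inertia $P=G_1$ is a $p$-group with a single ramification break, equal to $1$. One then filters $K[G]$ by powers of the augmentation ideal and passes to the associated graded ring $\operatorname{gr}\mathcal{O}_L\cong k_L[\bar\pi]$; the condition ``break $=1$'' says exactly that $\sigma-1$ shifts the valuation filtration of $\mathcal{O}_L$ by $1$, with the usual extra jump at multiples of $p$. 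In the cyclic degree-$p$ model case, writing $\tau=\sigma-1$, one gets $v_L(\tau^m\pi_L)=m+1$ for $0\le m\le p-1$; these valuations are pairwise incongruent mod $p$, so there is no cancellation among distinct powers of $\tau$, and a short argument yields $\mathfrak{A}_{L/K}=\mathcal{O}_K[G]+\pi_K^{-1}\tau^{p-1}\mathcal{O}_K$ (this is visibly a ring, using $\tau^{p}\in p\,\mathcal{O}_K[G]$). Hence $[\mathfrak{A}_{L/K}:\mathcal{O}_K[G]]=|\mathcal{O}_K/\m_K|=p^{f_K}=p^{f_L}$, the last equality because $L/K$ is totally ramified here. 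Combining (i) and (ii) gives $m(L/K)=p^{f_L}$.

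I expect the main obstacle to be step (ii) for a general weakly ramified extension, i.e.\ when $G_1$ is a non-cyclic $p$-group and there are nontrivial tame and residue layers: then the augmentation-ideal filtration of $\mathcal{O}_K[G]$ and the valuation filtration of $\mathcal{O}_L$ interact less transparently, and one has to show that the generators of $\mathfrak{A}_{L/K}$ beyond $\mathcal{O}_K[G]$ account for exactly one factor of the residue field. The cleanest route should be via the isomorphism of graded modules forced by break $=1$, together with the reduction to the totally ramified case to strip off the harmless layers; the same circle of ideas underpins the freeness statement (i) if one does not simply quote it.
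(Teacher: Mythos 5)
Your overall frame is the right one, and it is essentially how the paper itself views this statement (which it does not prove, but cites from Johnston): by Proposition \ref{productindex} one splits $m(L/K)$ into the freeness assertion for $\mathcal{O}_L$ over $\mathfrak{A}_{L/K}$ and the index $[\mathfrak{A}_{L/K}:\mathcal{O}_K[G]]$, and your model computation for a totally ramified cyclic degree-$p$ extension with jump $t=1$ is correct: there $\mathfrak{A}_{L/K}=\mathcal{O}_K[G]+\pi_K^{-1}(\sigma-1)^{p-1}\mathcal{O}_K$ and the index is $p^{f_K}=p^{f_L}$, in agreement with Theorem \ref{thm:StructureOfALK} and Theorem \ref{thm:IntroCyclicDegreep}. (A side remark on attribution: Erez's theorem concerns the square root of the inverse different as a $\mathbb{Z}[G]$-module, not the $\mathfrak{A}_{L/K}$-freeness of $\mathcal{O}_L$; the freeness you need in the weakly ramified case is precisely the K\"ock/Johnston result, i.e.\ the very source this theorem is quoted from, so quoting it for (i) is legitimate but makes your ``self-contained'' alternative the only genuinely new content.)

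The genuine gap is step (ii) beyond the cyclic degree-$p$, totally ramified case, together with the reduction you use to get there. For a wildly and weakly ramified extension, $G_1$ is an elementary abelian $p$-group of arbitrary rank, $G_0=G_1\rtimes C$ with $C$ cyclic of order prime to $p$ possibly acting nontrivially, and $G$ need not be abelian; this general situation is the actual content of the theorem, and you only address it with an admittedly heuristic graded-module sketch. Moreover, the assertion that the unramified and tame layers ``give induced modules and leave the relevant indices unchanged'' is exactly where the work lies: the paper's own tool for the unramified layer, Proposition \ref{unramifiedindex}, requires $G_0$ (or $G$) abelian together with a freeness hypothesis, and in general Berg\'e's results (Proposition \ref{indexassociated}, Proposition \ref{imageberge}, Theorem \ref{bergeprojective}) give only the inequality $[\mathfrak{A}_{L/K}:\mathcal{O}_K[G]]\leq[\mathfrak{A}_{L/L^{nr}}:\mathcal{O}_{L^{nr}}[G_0]]$, with equality governed by surjectivity of \eqref{injection}, which is not automatic for non-abelian $G$. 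For the tame quotient there is no cited tool at all: neither $m(L/K)$ nor the associated order decomposes along the tower $L/L^{G_1}/K$ in any obvious way, so ``stripping off'' the tame part needs an argument (Johnston's proof works with the full group $G$ and an explicit description of $\mathfrak{A}_{L/K}$ rather than such a d\'evissage). As written, your proof establishes the theorem only in the special case already covered by Theorem \ref{thm:IntroCyclicDegreep}, and the passage to general weakly ramified $G$ remains open.
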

In a closely related direction, K\"ock \cite{MR2089083} and Johnston \cite{MR3411126} discuss whether a power of the maximal ideal $(\pi_L)$ can be $\mathcal{O}_K[G]$-free, thus giving some information on the possible indices $[\mathcal{O}_L : \mathcal{O}_K[G]\alpha]$; see also Ullom \cite{MR263790}.
Besides being useful for understanding the \textit{additive} structure of $\mathcal{O}_L$, free $\mathcal{O}_K[G]$-submodules $M$ of the ring of integers also appear in the cohomological study of local class field theory. In fact, starting from such an $M$, one can construct a cohomologically trivial submodule $V$ of finite index in $\mathcal{O}_L^\times$ (see for example \cite[p.~8]{MR2467155}); the cohomology of $\mathcal{O}_L^\times$ is then isomorphic to that of the finite quotient $\mathcal{O}_L^\times/V$, so that information on $[\mathcal{O}_L:M]$ (hence on $[\mathcal{O}_L^\times : V]$) translates into bounds for the cohomology groups $H^i(G, \mathcal{O}_L^\times)$.

In this paper we provide both widely-applicable estimates for $m(L/K)$ and exact formulas in several special cases.
Our first result is the following  general upper bound for  $m(L/K)$, that will be proved in Subsection \ref{sect:GeneralBound}. Notice that $m(L/K)$ is by definition a power of $p$, so bounding its absolute value is equivalent to bounding its $p$-adic valuation.
\begin{theorem}\label{thm:IntroGeneralBound}
 Let $L/K$ be a Galois extension of $p$-adic fields. Then
\[
v_p (m(L/K)) \leq f_L(e_{L/K}-1) + \frac{1}{2} [L:\mathbb{Q}_p]  \cdot v_p([L:K]).
\]
\end{theorem}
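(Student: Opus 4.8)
The plan is to produce an explicit element $\alpha\in\mathcal{O}_L$ whose $\mathcal{O}_K[G]$-orbit has index in $\mathcal{O}_L$ bounded by the right-hand side, and then to estimate this index by a volume (discriminant) computation. First I would reduce to studying the $\mathcal{O}_K$-lattice $\mathcal{O}_K[G]\alpha\subseteq\mathcal{O}_L$; since both are free $\mathcal{O}_K$-modules of rank $[L:K]$, the index $[\mathcal{O}_L:\mathcal{O}_K[G]\alpha]$ is the norm (to $\mathbb{Q}_p$, then the $p$-adic absolute value) of the determinant of the matrix expressing $\{\sigma\alpha\}_{\sigma\in G}$ in an $\mathcal{O}_K$-basis of $\mathcal{O}_L$. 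Equivalently, $v_p([\mathcal{O}_L:\mathcal{O}_K[G]\alpha]) = \tfrac12\bigl(v_p(\disc(\mathcal{O}_K[G]\alpha/\mathcal{O}_K)) - v_p(\disc(\mathcal{O}_L/\mathcal{O}_K))\bigr)\cdot f_K$, and the discriminant of $\mathcal{O}_K[G]\alpha$ factors as $\disc(\mathcal{O}_K[G]/\mathcal{O}_K)$ times $\Nm_{L/K}(\alpha)$-type contributions — more precisely, the group-ring discriminant $\disc(\mathcal{O}_K[G]/\mathcal{O}_K)$ governs the ``defect'' coming from the wild part, and its $p$-adic valuation is $[L:K]\cdot v_p([L:K])$ by the standard formula $\disc(\mathcal{O}_K[G]/\mathcal{O}_K) = \pm |G|^{|G|}$ (taking $\mathrm{Tr}$ pairing on the group basis).

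Next I would choose $\alpha$ cleverly. A good candidate is a uniformiser-adjusted normal basis generator: take $\theta\in\mathcal{O}_L$ generating a normal basis of $L/K$ over $K$ (which exists by the classical normal basis theorem, after scaling into $\mathcal{O}_L$), and then control how far $\mathcal{O}_K[G]\theta$ sits from $\mathcal{O}_L$. The point is to bound $v_p([\mathcal{O}_L:\mathcal{O}_K[G]\theta])$ by separating the tame and wild contributions: the ``$f_L(e_{L/K}-1)$'' term should come from the ramification filtration / the conductor-discriminant formula applied to the extension $L/K$ (the different $\mathfrak{d}_{L/K}$ has valuation $\geq e_{L/K}-1$, with equality in the tame case, and each inertia-degree copy contributes an $f_L$ factor), while the ``$\tfrac12[L:\mathbb{Q}_p]v_p([L:K])$'' term is exactly half the $p$-adic valuation of $\det$ of the group-ring Gram matrix, i.e.\ the intrinsic obstruction measured by $\disc(\mathcal{O}_K[G]/\mathcal{O}_K)$, rescaled from $\mathcal{O}_K$ down to $\mathbb{Q}_p$ (which is where the $[K:\mathbb{Q}_p]$ inside $[L:\mathbb{Q}_p]=f_L e_L$ enters). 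Assembling: $v_p(m(L/K)) \leq v_p([\mathcal{O}_L:\mathcal{O}_K[G]\theta]) \leq f_L(e_{L/K}-1) + \tfrac12[L:\mathbb{Q}_p]v_p([L:K])$.

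The cleanest route for the second term is probably: $v_p([\mathcal{O}_L:\mathcal{O}_K[G]\alpha]) \leq v_p([\mathfrak{A}_{L/K}:\mathcal{O}_K[G]]) + v_p([\mathcal{O}_L:\mathfrak{A}_{L/K}\alpha])$, then bound $v_p([\mathfrak{A}_{L/K}:\mathcal{O}_K[G]]) \leq \tfrac12 v_p(\disc(\mathcal{O}_K[G]/\mathcal{O}_K)/\disc(\mathfrak{A}_{L/K}/\mathcal{O}_K))\cdot f_K$ crudely by $\tfrac12 v_p(\disc(\mathcal{O}_K[G]/\mathcal{O}_K))\cdot f_K = \tfrac12 f_K[L:K]v_p([L:K])$, using that $\disc(\mathfrak{A}_{L/K}/\mathcal{O}_K)$ is an integral ideal so its valuation is nonnegative. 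One then needs $v_p([\mathcal{O}_L:\mathfrak{A}_{L/K}\alpha]) \leq f_L(e_{L/K}-1)$ for a suitable $\alpha$; here I would invoke (or reprove) the fact that $\mathfrak{A}_{L/K}$ is ``not too far'' from acting transitively — concretely, that $\pi_K^{?}\cdot\mathcal{O}_L \subseteq \mathfrak{A}_{L/K}\alpha$ for an $\alpha$ with small-valuation ``conductor'', the exponent being controlled by the different exponent $d_{L/K} = v_L(\mathfrak{d}_{L/K})$, and $d_{L/K} \leq e_{L/K}-1 + v_L(e_{L/K}) \leq \ldots$ — so some care with the exact wild contribution is needed, and the honest bound $d_{L/K}\leq e_{L/K}-1+v_p(e_{L/K})e_{L/K}$ may need to be absorbed into the second term rather than the first.

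The main obstacle I anticipate is making the split between the two terms rigorous: neither ``$f_L(e_{L/K}-1)$'' nor ``$\tfrac12[L:\mathbb{Q}_p]v_p([L:K])$'' is individually the whole story, and the wild part of the different spills over. The cleanest fix is likely to not try to produce the sharp $\alpha$ but rather to bound $v_p(m(L/K))$ directly by $\tfrac12 f_K\bigl(v_p\disc(\mathcal{O}_K[G]/\mathcal{O}_K) - v_p\disc(\mathcal{O}_L/\mathcal{O}_K) + 2 f_L(e_{L/K}-1)\bigr)$ via a volume argument on the lattice $\mathcal{O}_K[G]\theta$ inside $\mathcal{O}_L$ for a normal-basis $\theta\in\mathcal{O}_L$ chosen with $v_L(\theta)$ as small as possible, and then simplify using $v_p(\disc(\mathcal{O}_L/\mathcal{O}_K)) = f_L d_{L/K}/e_K \geq 0$ and $v_p(\disc(\mathcal{O}_K[G]/\mathcal{O}_K)) = f_K\cdot[L:K]v_p([L:K])/e_K$. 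The inequality $v_p(\disc(\mathcal{O}_L/\mathcal{O}_K))\geq f_L(e_{L/K}-1)/e_K \cdot(\text{something})$ then needs to be played against the lower bound $d_{L/K}\geq e_{L/K}-1$ to recover the stated clean form; I expect a short lemma reconciling these and a careful bookkeeping of $e_K$ versus $e_L=e_K e_{L/K}$ to be exactly where the real work lies.
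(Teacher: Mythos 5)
Your handling of the second term is essentially the paper's: compute $\operatorname{disc}(\mathcal{O}_K[G])=\pm|G|^{|G|}$ via the trace pairing on the group basis, and use that the discriminant of any larger order is integral, so $v_p([\Lambda:\mathcal{O}_K[G]])\leq \tfrac12 f_K\, v_K(|G|^{|G|}\mathcal{O}_K)=\tfrac12[L:\mathbb{Q}_p]v_p([L:K])$ (note your displayed value $\tfrac12 f_K[L:K]v_p([L:K])$ drops the factor $e_K$ coming from $v_K(|G|)=e_Kv_p(|G|)$; this is only bookkeeping, but it shows the $v_p$-versus-$v_K$ confusion you flag yourself). The genuine gap is the first term. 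You try to obtain $f_L(e_{L/K}-1)$ from ramification data of $L/K$ (the different exponent $d_{L/K}\geq e_{L/K}-1$, conductor-discriminant type input), and you need either the unproven claim that some $\alpha$ satisfies $v_p([\mathcal{O}_L:\mathfrak{A}_{L/K}\alpha])\leq f_L(e_{L/K}-1)$, or the vaguer volume argument of your last paragraph; you correctly observe that the wild part of the different ``spills over'' and never exhibit an $\alpha$ together with a proof of the required index bound. Indeed the intermediate inequality you want is stronger than what the paper itself establishes: its argument only gives $\min_\alpha v_p([\mathcal{O}_L:\mathfrak{A}_{L/K}\alpha])\leq f_L(e_{L/K}-1)+v_p([\mathfrak{M}:\mathfrak{A}_{L/K}])$, and there is no reason the wild contribution should be absorbed entirely into the group-ring term along your route.

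The missing idea is that in the paper $f_L(e_{L/K}-1)$ has nothing to do with the different of $L/K$: it is a rounding loss. One takes a maximal order $\mathfrak{M}\supseteq\mathcal{O}_K[G]$ in $K[G]$; by the theory of maximal orders over complete discrete valuation rings (Reiner, Theorem 18.10) the lattice $\mathfrak{M}\mathcal{O}_L$ is \emph{automatically} free of rank one over $\mathfrak{M}$, and a short computation shows it is a fractional ideal $\pi_L^{-a}\mathcal{O}_L$. Choosing a free generator $x$ and scaling by $r=\pi_K^{b}$ with $b=\lceil a/e_{L/K}\rceil$ so that $rx\in\mathcal{O}_L$, the index bookkeeping gives
\[
[\mathfrak{M}\mathcal{O}_L:\mathcal{O}_L]\,[\mathcal{O}_L:\mathcal{O}_K[G]rx]=[\mathfrak{M}:\mathcal{O}_K[G]]\,p^{f_K|G|b},
\qquad [\mathfrak{M}\mathcal{O}_L:\mathcal{O}_L]=p^{af_L},
\]
and since $f_K|G|b\leq f_L(a+e_{L/K}-1)$ the leftover exponent is exactly $f_L(e_{L/K}-1)$. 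This freeness-plus-fractional-ideal device is what replaces your attempt to show that $\mathfrak{A}_{L/K}$ ``almost acts transitively'' with conductor controlled by $d_{L/K}$; without it (or some substitute producing an explicit $\alpha$ with a proven index bound) your proposal does not yield the stated inequality.
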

We  note that the bound of the previous theorem can be relaxed to an estimate depending only on $[L:\mathbb{Q}_p]$.

 Much more is known on the Galois structure of $L/K$  when $L$ is absolutely abelian. 
As a consequence, in this case we obtain an explicit formula for the value of $m(L/K)$. 
\begin{theorem}\label{thm:IntroAbsolutelyAbelian}
Let $L/K$ be a Galois extension of $p$-adic fields, with $p$ odd, and assume that $L/\Q_p$ is abelian. Denote by $L^{nr}$ the maximal unramified subextension of $L/K$. Then
$$
\begin{array}{rl}
 v_p( m(L/K))&=v_{p}(m(L/L^{nr}))\\
  &  \displaystyle={\frac{f_L}{2}\left(e_{L}\cdot v_p(e_{L/K})- \sum_{d \mid e_{L/K}} \frac{\varphi(d)}{[L^{nr}(\zeta_d):L^{nr}]} v_{L^{nr}}( \operatorname{disc}(L^{nr}(\zeta_d)/L^{nr}))\right)}.
\end{array}
$$
 \end{theorem}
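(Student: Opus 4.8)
The plan is to split the proof into a reduction to the totally ramified case and an explicit computation there. For the first equality, I would establish the general principle that if $M/K$ is an unramified subextension of a Galois extension $L/K$, then $v_p(m(L/K)) = v_p(m(L/M))$: since $\mathcal{O}_M$ has a normal integral basis over $\mathcal{O}_K[\gal(M/K)]$ (Theorem \ref{tamepadic}), one can transport a minimal-index free $\mathcal{O}_M[\gal(L/M)]$-submodule of $\mathcal{O}_L$ to a free $\mathcal{O}_K[\gal(L/K)]$-submodule of the same index, and conversely. Applying this with $M = L^{nr}$ — and using $e_{L/L^{nr}} = e_{L/K}$, while $e_L$ and $f_L$ are unchanged — reduces us to proving the displayed formula for the totally ramified extension $L/L^{nr}$. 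Write $F := L^{nr}$ and $\Gamma := \gal(L/F)$; because $p$ is odd and $L/\mathbb{Q}_p$ is abelian, the inertia group $\Gamma$ is cyclic, of order $e := e_{L/K}$, and $e_F = e_K$, $f_F = f_L$.

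The heart of the argument is the claim that $\mathcal{O}_L$ is stable under the (unique) maximal $\mathcal{O}_F$-order $\mathfrak{M}$ in $F[\Gamma]$, equivalently $\mathfrak{A}_{L/F} = \mathfrak{M}$. By local Kronecker--Weber, $L$ sits inside a compositum of a cyclotomic field $\mathbb{Q}_p(\zeta_{p^k})$ with an unramified field, and I would use this to reduce the inclusion $\mathfrak{M}\cdot\mathcal{O}_L \subseteq \mathcal{O}_L$ to a computation in a cyclotomic tower: it suffices to check that the idempotents $\frac{1}{|\Gamma|}\sum_{\sigma\in\Gamma}\chi(\sigma)\sigma$ of $F[\Gamma]$ (for $\chi\in\widehat{\Gamma}$, after extending scalars to a splitting field) send a chosen $\mathcal{O}_F$-generator of $\mathcal{O}_L$ into $\mathcal{O}_L$; expressing that generator in terms of roots of unity, the resulting character sums $\frac{1}{|\Gamma|}\sum_\sigma \chi(\sigma)\sigma(\zeta^j)$ are either $0$ or a root of unity, so the denominator $|\Gamma|$ cancels. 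This is where $p$ odd is used (it is what makes $\Gamma$ cyclic and, more concretely, provides the $p$-divisibility of the traces $\Tr_{L/L'}$ along the tower needed for the cancellation). Once $\mathfrak{A}_{L/F} = \mathfrak{M}$ is known, $\mathfrak{M}$ is a finite product of discrete valuation rings — the rings of integers of the fields $F(\zeta_d)$ arising as simple factors of $F[\Gamma]$ — so any full rank-$1$ $\mathfrak{M}$-lattice, in particular $\mathcal{O}_L$, is free of rank $1$ over $\mathfrak{M}$. By Proposition \ref{associatedfree} this re-confirms $\mathfrak{M} = \mathfrak{A}_{L/F}$, and Proposition \ref{productindex} then gives $m(L/F) = [\mathfrak{M} : \mathcal{O}_F[\Gamma]]$.

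It remains to evaluate this index by a discriminant computation, taking discriminants of $\mathcal{O}_F$-orders in $F[\Gamma]$ with respect to the trace form of the algebra. From $\mathcal{O}_F[\Gamma]\subseteq\mathfrak{M}$ one gets $v_p(m(L/F)) = \tfrac{f_L}{2}\bigl(v_F(\disc(\mathcal{O}_F[\Gamma])) - v_F(\disc\mathfrak{M})\bigr)$. Here $\disc(\mathcal{O}_F[\Gamma])$ is generated by $\pm e^{e}$, and since $v_F(p) = e_K$ we obtain $v_F(\disc(\mathcal{O}_F[\Gamma])) = e\cdot e_K\cdot v_p(e) = e_L\,v_p(e_{L/K})$. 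On the other hand $F[\Gamma]\cong\prod_{d\mid e}F(\zeta_d)^{\varphi(d)/[F(\zeta_d):F]}$; the algebra trace form of $F[\Gamma]$ is the orthogonal sum of the trace forms of the factors, and $\mathfrak{M} = \prod_{d\mid e}\mathcal{O}_{F(\zeta_d)}^{\varphi(d)/[F(\zeta_d):F]}$, whence $v_F(\disc\mathfrak{M}) = \sum_{d\mid e_{L/K}}\tfrac{\varphi(d)}{[F(\zeta_d):F]}\,v_F(\disc(F(\zeta_d)/F))$ (only the $d$ divisible by $p$ contribute). Substituting $F = L^{nr}$ gives the stated formula.

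The step I expect to be hardest is the identification $\mathfrak{A}_{L/L^{nr}} = \mathfrak{M}$: for $L$ itself cyclotomic over $F$ the idempotent computation is immediate because $\mathcal{O}_L$ is monogenic with a distinguished generator, but a general abelian extension of $\mathbb{Q}_p$ is only a subfield of a cyclotomic field and its ring of integers is less transparent, so one must either descend the statement carefully along the embedding into a cyclotomic field while tracking maximal orders, or argue directly with the valuations of the local resolvents (Gauss sums) $(\alpha\mid\chi)$, showing that for a suitable $\alpha$ these attain the minimal valuation compatible with the conductor of $\chi$ — precisely the condition forcing maximality of $\mathfrak{A}_{L/L^{nr}}$. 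Care is also needed for the unramified-descent lemma of the first step and for the exclusion of $p=2$, where the $2$-adic cyclotomic tower is not procyclic and the argument breaks down.
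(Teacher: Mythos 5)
Your proof has two genuine gaps, and they sit exactly at the two load-bearing points of the theorem. First, the equality $v_p(m(L/K))=v_p(m(L/L^{nr}))$ is not a formal "unramified descent" principle, and your transport argument does not establish it: $\gal(L^{nr}/K)$ is a \emph{quotient} of $G$, not a subgroup (the extension $1\to\gal(L/L^{nr})\to G\to\gal(L^{nr}/K)\to 1$ need not split), so a normal integral basis generator of $\mathcal{O}_{L^{nr}}$ over $\mathcal{O}_K[\gal(L^{nr}/K)]$ cannot be spliced with an $\mathcal{O}_{L^{nr}}[\gal(L/L^{nr})]$-generator of a submodule of $\mathcal{O}_L$ to produce an $\mathcal{O}_K[G]$-generator, and even when the extension does split the index of any natural candidate is not obviously the same; the "conversely" is likewise unsupported. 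In the paper this is precisely where the heavy input enters: Lettl's theorem gives that $\mathcal{O}_L$ is free over $\mathfrak{A}_{L/K}$, and then Proposition \ref{unramifiedindex} — which rests on Bergé's projectivity-descent Theorem \ref{bergeprojective}, the surjectivity of \eqref{injection} for abelian $G$ (Proposition \ref{imageberge}), and cleanness of commutative orders — yields $m(L/K)=m(L/L^{nr})=[\mathfrak{A}_{L/L^{nr}}:\mathcal{O}_{L^{nr}}[G_0]]$. Stated without hypotheses, your principle is not known to hold (the paper explicitly presents the equality as valid only in special cases), so this step needs either those inputs or a genuinely new argument.

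Second, the claim $\mathfrak{A}_{L/L^{nr}}=\mathfrak{M}$ is Lettl's Theorem 1, which the paper cites rather than reproves; your sketch does not prove it. Verifying that the Wedderburn idempotents carry $\mathcal{O}_L$ into $\mathcal{O}_L$ would only show that $\mathfrak{A}_{L/L^{nr}}$ contains $\bigoplus_\chi e_\chi\,\mathcal{O}_{L^{nr}}[\Gamma]$, whose components are of the form $\mathcal{O}_{L^{nr}}[\zeta_d]$ and need not be the maximal orders $\mathcal{O}_{L^{nr}(\zeta_d)}$ when $L^{nr}/\Q_p$ is ramified (compare $F=\Q_3(\sqrt3)$, $d=3$, where $\disc(x^2+x+1)=-3$ is not a unit although $F(\zeta_3)/F$ is unramified); you have not ruled this out, so "it suffices to check the idempotents" is unjustified. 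Moreover, your cancellation of $|\Gamma|$ in the character sums presupposes an $\mathcal{O}_{L^{nr}}$-generator of $\mathcal{O}_L$ expressed in roots of unity, which exists only when $L$ itself is cyclotomic over $L^{nr}$ — as you acknowledge, and the fallback via resolvents is left entirely unspecified. By contrast, the final step of your proposal (once freeness over the maximal order is granted, computing $[\mathfrak{M}:\mathcal{O}_{L^{nr}}[\Gamma]]$ from $\disc\mathcal{O}_{L^{nr}}[\Gamma]=e^{e}\mathcal{O}_{L^{nr}}$ and the cyclotomic factors) is correct and coincides with Lemma \ref{associatedmaximal} and Equation \eqref{formula-max-abeliano} in the paper, but it is the easy part; the two steps above are where the actual content lies and they are missing.
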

We prove Theorem~\ref{thm:IntroAbsolutelyAbelian} in Section~\ref{sec:assab}.

In Section~\ref{sec:local-global} we consider an analogous quantity $m(E/F)$ for a number field extension $E/F$ and, in the case when $F=\mathbb{Q}$ and $E/\mathbb{Q}$ is abelian,  we establish some relations between the $p$-adic valuation of the minimal index for
$E/\mathbb{Q}$, and the  $p$-adic valuation of the minimal index for their $p$-adic completions.
 
Section~\ref{sec:degp} is devoted to the special interesting case of cyclic extensions of  degree $p$. We prove the following exact formula for $m(L/K)$, which holds  without any assumption on $K$. 
\begin{theorem}\label{thm:IntroCyclicDegreep}
Let $L/K$ be a ramified Galois extension of $p$-adic fields of degree $p$, with ramification jump $t$. Let $a \in \{0,\ldots,p-1\}$ be the residue class of $t$ modulo $p$ and set $\nu_i = \left\lfloor \frac{a+it}{p} \right\rfloor$.
Then if $a \neq 0$ we have
$
v_p( m(L/K))= f_K \left(\sum_{i=0}^{p-1} \nu_i+\min_{0\leq i\leq p-1}(i e_K-(p-1)\nu_i) \right),
$
while for $a=0$ we have $v_p(m(L/K))=\frac{1}{2}[L:\mathbb{Q}_p]$.
\end{theorem}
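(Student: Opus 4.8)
Here is the plan I would follow.

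First, since $[L:K]=p$ is prime and $L/K$ is ramified it is totally ramified, so $f_L=f_K$, $e_L=pe_K$ and $[L:\mathbb{Q}_p]=pe_Kf_K$. Fixing a generator $\sigma$ of $G$, the powers $(\sigma-1)^i$ for $0\le i\le p-1$ form an $\mathcal{O}_K$-basis of $\mathcal{O}_K[G]$, so $\mathcal{O}_K[G]\alpha=\bigoplus_{i=0}^{p-1}\mathcal{O}_K(\sigma-1)^i\alpha$ for every $\alpha\in\mathcal{O}_L$. Setting $\lambda_r(\alpha)=\min\{v_L(x):0\ne x\in\mathcal{O}_K[G]\alpha,\ v_L(x)\equiv r\pmod p\}$ for $r=0,\dots,p-1$, a Smith-normal-form argument over the discrete valuation ring $\mathcal{O}_K$ (using the $\mathcal{O}_K$-basis $1,\pi_L,\dots,\pi_L^{p-1}$ of $\mathcal{O}_L$) shows that, for any normal basis generator $\alpha$,
\[
v_p\bigl([\mathcal{O}_L:\mathcal{O}_K[G]\alpha]\bigr)=f_K\Bigl(\tfrac1p\textstyle\sum_{r=0}^{p-1}\lambda_r(\alpha)-\tfrac{p-1}{2}\Bigr),
\]
so that $v_p(m(L/K))=f_K\bigl(\tfrac1p\min_\alpha\sum_r\lambda_r(\alpha)-\tfrac{p-1}{2}\bigr)$ and the problem becomes the minimisation of $\sum_r\lambda_r(\alpha)$.

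Next I would set up the valuation arithmetic. From $v_L(\sigma\pi_L-\pi_L)=t+1$ one gets $v_L((\sigma-1)x)=v_L(x)+t$ whenever $p\nmid v_L(x)$, while $v_L((\sigma-1)x)\ge v_L(x)+t$ always and $v_L((\sigma-1)x)\ge v_L(x)+t+1$ when $p\mid v_L(x)$, with equality in this last case for generic $x$ — here one uses $a\ne0$, i.e. $t<pe_K/(p-1)$, to discard the competing term of valuation $v_L(x)+pt$. Combined with the relation $(\sigma-1)^p=-\sum_{l=1}^{p-1}\binom{p}{l}(\sigma-1)^l$, which brings $v_L(p)=pe_K$ into play, this determines $v_L((\sigma-1)^j\alpha)$ for generic $\alpha$ with any prescribed $v_L(\alpha)\in\{0,\dots,p-1\}$. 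For $v_L(\alpha)=a$ these valuations are exactly $a+jt=p\nu_j+\rho_j$ with $\rho_j:=(a+jt)\bmod p$; since $p\nmid t$ they are pairwise distinct modulo $p$, so $\{(\sigma-1)^j\alpha\}$ is already an optimal $\mathcal{O}_K$-basis, $\sum_r\lambda_r(\alpha)=\sum_{j=0}^{p-1}(a+jt)$, and a short computation gives $v_p([\mathcal{O}_L:\mathcal{O}_K[G]\alpha])=f_K\sum_{j=0}^{p-1}\nu_j$. This is the term $i=0$ of the claimed formula and already proves $v_p(m(L/K))\le f_K\sum_j\nu_j$.

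For the sharp value when $a\ne0$ one must do better, by taking $\alpha$ with $v_L(\alpha)$ equal to a carefully chosen residue $\rho_i$ with $\rho_i<a$, so that $\alpha$ itself realises a small valuation in its residue class and $\mathcal{O}_K[G]\alpha$ becomes more efficient. For such $\alpha$ the valuations of the $(\sigma-1)^j\alpha$ no longer fill all residue classes: some classes are covered twice and equally many are missed, and the $\lambda_r(\alpha)$ must be found by resolving the collisions through $\mathcal{O}_K$-linear combinations — a Euclidean-algorithm-type cascade. I expect the outcome to be that, for the index $i$, the cascade terminates at an element of valuation exactly $pe_K+a$ occupying the missing class of $t$, whence $\sum_r\lambda_r(\alpha)=\sum_j(a+jt)+p\bigl(ie_K-(p-1)\nu_i\bigr)$ and $v_p([\mathcal{O}_L:\mathcal{O}_K[G]\alpha])=f_K\bigl(\sum_j\nu_j+ie_K-(p-1)\nu_i\bigr)$; minimising over $i$ gives the upper bound, and the matching lower bound follows by checking that no $\alpha$ can push the relevant minima below $pe_K+a$. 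The hard part is precisely this collision-resolution bookkeeping: showing that the chain of combinations needed to cover the missing residue classes terminates exactly at valuation $pe_K+a$, controlling the genericity hypotheses on $\alpha$ throughout, and proving the lower bound — a continued-fraction computation in the spirit of Ferton's analysis of the associated order of a cyclic degree-$p$ extension.

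Finally, the case $a=0$ should be comparatively clean: here $t=pe_K/(p-1)$ is the maximal jump, which forces $(p-1)\mid e_K$ and $\zeta_p\in K$, so $K[G]\cong K^p$. One then shows that $\mathfrak{A}_{L/K}$ is the maximal order $\mathcal{O}_K^p$ of $K[G]$ and that $\mathcal{O}_L$, being a finitely generated torsion-free module of the correct rank over a product of discrete valuation rings, is free of rank $1$ over $\mathfrak{A}_{L/K}$; hence $m(L/K)=[\mathfrak{A}_{L/K}:\mathcal{O}_K[G]]$ by Proposition~\ref{productindex}, and since $\disc(x^p-1)=\pm p^p$ one computes $[\mathcal{O}_K^p:\mathcal{O}_K[G]]=p^{f_Kpe_K/2}=p^{\frac12[L:\mathbb{Q}_p]}$, as claimed.
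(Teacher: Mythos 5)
Your general framework is sound: for a totally ramified degree-$p$ extension, the index of $\mathcal{O}_K[G]\alpha$ in $\mathcal{O}_L$ is indeed governed by the minimal valuations $\lambda_r(\alpha)$ in each residue class modulo $p$, and the formula $v_p([\mathcal{O}_L:\mathcal{O}_K[G]\alpha])=f_K\bigl(\tfrac1p\sum_r\lambda_r(\alpha)-\tfrac{p-1}{2}\bigr)$ is correct. Your treatment of $a=0$ (the idempotents lie in $\mathfrak{A}_{L/K}$, so it is the maximal order $\mathcal{O}_K^p$, and $\operatorname{disc}(x^p-1)=\pm p^p$ gives $p^{\frac12[L:\mathbb{Q}_p]}$) is essentially the paper's Proposition~\ref{prop:aeq0} and Corollary~\ref{cor:AEqZeroOLIsAFree} in a slightly different order, and your computation for $v_L(\alpha)=a$, yielding the value $f_K\sum_j\nu_j$, matches the $i=0$ term of the formula and the paper's Proposition~\ref{prop:PropertiesOff} and Theorem~\ref{thm:StructureOfStuff}.

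However, for $a\neq 0$ the actual content of the theorem is left unproved. You write that you \emph{expect} the ``collision-resolution cascade'' to terminate at an element of valuation exactly $pe_K+a$, and you explicitly defer both the construction of an $\alpha$ realising the candidate value $f_K\bigl(\sum_j\nu_j+ie_K-(p-1)\nu_i\bigr)$ for a general index $i$ and, more seriously, the lower bound showing that no normal basis generator can produce a smaller index. That is precisely where all the difficulty lies. In the paper this is done by expanding $f^j\beta$ (with $f=\sigma-1$) in the basis $\{f^i\pi_L^a\}$, bounding the valuations of the resulting matrix entries (Lemmas~\ref{lemma:fiv}, \ref{lemma:InequalityOnValuations1}, \ref{lemma:InequalityOnValuations2}), and controlling every term of the Leibniz expansion of the determinant via the combinatorial identity $\sum_{j\in S}j=(p-1)k$ of Lemma~\ref{lemma:RelationKS} together with the function $g(j)=\tfrac{j}{p-1}e_K-\nu_j$; the matching upper bound requires the explicit element $\beta=\pi_L^a+\pi_K^{-\nu_{i_m}}f^{i_m}\pi_L^a$ and the delicate uniqueness argument of Proposition~\ref{prop:SingleLeibnizTermWithMinimalValuation} and Lemma~\ref{lemma:OnlyPermutation} showing that exactly one Leibniz term attains the minimal valuation. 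Your proposal contains no substitute for these steps (nor a justification that the optimal $\alpha$ can be taken with $v_L(\alpha)$ a single residue $\rho_i<a$, which is not what the paper's minimiser looks like), so as it stands it is a plausible plan for the case $a\neq0$ rather than a proof.
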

The method used to show Theorem \ref{thm:IntroCyclicDegreep} also allows us to give, in Section~\ref{sect:BertrandiasFerton}, a  new proof of a result originally due to Ferton and Bertrandias \cite[Théorème]{MR296047}.
\begin{theorem}\label{thm:IntroFertonBertrandias}
Let $L/K$ be a totally ramified cyclic extension of degree $p$ of $p$-adic fields. Let $t$ be the unique ramification jump of the extension (in the lower numbering) and let $a \in \{0,\ldots,p-1\}$ be the residue class of $t$ modulo $p$. The following hold:
\begin{enumerate}
\item if $a=0$ or $a \mid p-1$, then $\mathcal{O}_L$ is free over $\mathfrak{A}_{L/K}$. 
\item Suppose that the inequality $t < \frac{ep}{p-1} - 1$ holds. If $\mathcal{O}_L$ is free over $\mathfrak{A}_{L/K}$, then $a \mid p-1$.
\end{enumerate}
\end{theorem}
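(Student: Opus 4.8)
The plan is to deduce both parts from the explicit value of $m(L/K)$ given by Theorem~\ref{thm:IntroCyclicDegreep} together with the description of the associated order established in Section~\ref{sec:degp}. The first move is Proposition~\ref{productindex}: since
\[
m(L/K)=[\mathfrak{A}_{L/K}:\mathcal{O}_K[G]]\cdot\min_{\alpha\in\mathcal{O}_L}[\mathcal{O}_L:\mathfrak{A}_{L/K}\alpha]
\]
and $\mathcal{O}_L$ is $\mathfrak{A}_{L/K}$-free exactly when the last factor equals $1$, being $\mathfrak{A}_{L/K}$-free is equivalent to the numerical identity $v_p(m(L/K))=v_p([\mathfrak{A}_{L/K}:\mathcal{O}_K[G]])$. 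We may assume, as in the statement, that $L/K$ is totally, hence wildly, ramified of degree $p$ with ramification jump $t\geq 1$.

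Next I would write both sides out. For the left side we have Theorem~\ref{thm:IntroCyclicDegreep}, whose formula is phrased in terms of $e_K,f_K$ and the integers $\nu_i=\lfloor(a+it)/p\rfloor$. For the right side I would use the description of $\mathfrak{A}_{L/K}$ from Section~\ref{sec:degp} — built from the behaviour of $\sigma-1$ on the valuation filtration of $\mathcal{O}_L$, where the $v_L$-valuation increases by $t$ on elements of valuation prime to $p$ and by more when the valuation is divisible by $p$ — which yields a closed formula for $v_p([\mathfrak{A}_{L/K}:\mathcal{O}_K[G]])$, of the shape $f_K\sum_{i=1}^{p-1}c_i$ with explicit exponents $c_i$ governed by the same ``valuation walk'' that produces the $\nu_i$. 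Equating the two sides turns ``$\mathcal{O}_L$ is $\mathfrak{A}_{L/K}$-free'' into a purely arithmetic condition on $t,a,e_K,p$; concretely it says that $\min_{0\leq i\leq p-1}(ie_K-(p-1)\nu_i)$ takes the smallest value the intrinsic constraints allow, i.e. that the ``offset'' optimisation in Theorem~\ref{thm:IntroCyclicDegreep} matches the associated-order windows $[c_ip,(c_i+1)p)$.

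For part (1): if $a=0$ then $p\mid t$, which by the classical structure theory of degree-$p$ extensions forces $t=\tfrac{pe_K}{p-1}$ (the maximal jump, occurring only when $\mu_p\subseteq K$); in this Kummer situation all the quantities in play have clean closed forms, and a short computation shows that both $v_p(m(L/K))$ and $v_p([\mathfrak{A}_{L/K}:\mathcal{O}_K[G]])$ equal $\tfrac12[L:\mathbb{Q}_p]$, so $\mathcal{O}_L$ is $\mathfrak{A}_{L/K}$-free. If instead $a\mid p-1$ with $a\geq 1$, the arithmetic condition must be checked by hand. The reason it holds is that for $a\mid p-1$ the valuation walk is ``rigid'': one can choose a normal basis generator $\alpha$, with $v_L(\alpha)$ an explicit residue depending on $a$, for which the valuations $v_L((\sigma-1)^i\alpha)$ increase by $t$ except on a periodic set of indices where they increase by $t+1$, their residues mod $p$ exhaust $\{0,\dots,p-1\}$, and each $v_L((\sigma-1)^i\alpha)$ lands in the correct window $[c_ip,(c_i+1)p)$. (In continued-fraction language, $a\mid p-1$ is exactly when the expansion of $p/a$ has length $\leq 2$, the simplest case, which is why the obvious candidate generator works; this is the content of Ferton's combinatorics, re-derived here within the setup of Section~\ref{sec:degp}.) I would make this precise by an induction on $i$ tracking $v_L((\sigma-1)^i\alpha)$ modulo $p$ together with the (a priori nonnegative) defect $\lfloor v_L((\sigma-1)^i\alpha)/p\rfloor-c_i$, showing the defect stays $0$.

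For part (2): assume $t<\tfrac{ep}{p-1}-1$ and that $\mathcal{O}_L$ is $\mathfrak{A}_{L/K}$-free; I would prove $a\mid p-1$ contrapositively. If $a\nmid p-1$ and $a\neq 0$, the valuation walk is forced to produce a defect: there is an index $i$ for which $\lfloor v_L((\sigma-1)^i\alpha)/p\rfloor>c_i$ no matter how $\alpha$ is chosen, so $\min_\alpha[\mathcal{O}_L:\mathfrak{A}_{L/K}\alpha]>1$. The role of the hypothesis $t<\tfrac{ep}{p-1}-1$ is to ensure that $e_K$ is large enough relative to $t$ that the minimum $\min_{0\leq i\leq p-1}(ie_K-(p-1)\nu_i)$ in Theorem~\ref{thm:IntroCyclicDegreep} is not attained ``cheaply'' via a small value of $ie_K$ — i.e. the obstruction really is the intrinsic one coming from the $\nu_i$'s — so that the defect above genuinely prevents the identity $v_p(m(L/K))=v_p([\mathfrak{A}_{L/K}:\mathcal{O}_K[G]])$. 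I expect this last combinatorial step to be the main obstacle: identifying $a\mid p-1$ as exactly the dividing line between ``the minimal valuation vector fits the windows of $\mathfrak{A}_{L/K}$'' and ``it cannot'', and verifying that $t<\tfrac{ep}{p-1}-1$ is precisely the hypothesis needed to separate the two regimes, both demand careful control of the walk — most cleanly through a continued-fraction analysis of the exponents $c_i$ defining $\mathfrak{A}_{L/K}$ and of the optimal $\alpha$, in the spirit of Ferton's original argument transported into the present framework.
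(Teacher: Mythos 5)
Your reduction via Proposition \ref{productindex} --- freeness of $\mathcal{O}_L$ over $\mathfrak{A}_{L/K}$ is equivalent to the numerical identity $v_p(m(L/K))=v_p([\mathfrak{A}_{L/K}:\mathcal{O}_K[G]])$ --- is exactly the paper's starting point (Theorem \ref{thm:BertrandiasFertonReformulation}), and your treatment of $a=0$ via Kummer theory matches Proposition \ref{prop:aeq0} and Corollary \ref{cor:AEqZeroOLIsAFree}. But as written the argument has two genuine gaps. First, the ``closed formula of the shape $f_K\sum_i c_i$'' for $[\mathfrak{A}_{L/K}:\mathcal{O}_K[G]]$ is not available in Section \ref{sec:degp}; it is a substantive result that must be proved. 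The paper establishes it as Theorem \ref{thm:StructureOfALK}: $\mathfrak{A}_{L/K}=\bigoplus_i \mathcal{O}_K\,\pi_K^{-n_i}f^i$ with $n_i=\min_{0\leq j\leq p-1-i}(\nu_{i+j}-\nu_j)$, and its proof needs the integrality statement of Corollary \ref{cor:iPlusjGreaterThanP} for the wrap-around terms $f^{i+j}$ with $i+j\geq p$, together with the ``distinct valuations mod $p$, hence no cancellation'' argument. Without this, your ``windows $[c_ip,(c_i+1)p)$'' are undefined and neither direction of the theorem can be checked. (For part (1) with $a\mid p-1$, $a\geq 1$, the concrete verification you defer is that $\nu_i=it_0+\lfloor i/k\rfloor$ with $k=(p-1)/a$, whence $\min_i(ie-(p-1)\nu_i)=0$ and $n_i=\nu_i$; this is Lemma \ref{lemma:CFLength2} and the lemma following it.)

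Second, and more seriously, part (2) is the heart of the theorem and your proposal leaves its decisive step as an expectation, which you yourself flag as ``the main obstacle''. Moreover, the role you assign to the hypothesis $t<\frac{ep}{p-1}-1$ --- that the minimum $\min_i(ie_K-(p-1)\nu_i)$ is ``not attained cheaply'' --- is not how the hypothesis actually functions and does not yield a proof. In the paper it is used to sharpen Corollary \ref{cor:iPlusjGreaterThanP}: under non-almost-maximal ramification one gets $\pi_K^{-n_i}f^i\cdot\pi_K^{-\nu_j}f^j\pi_L^a\in\pi_K\mathcal{O}_L$ for $i+j\geq p$, so the matrix of $\mathfrak{A}_{L/K}\beta$ in the basis $\{\pi_K^{-\nu_i}f^i\pi_L^a\}$ is lower triangular modulo $\pi_K$ with determinant congruent to $d_0^p\,\pi_K^{\sum_i(\nu_i-n_i)}$; freeness then forces $\nu_i=n_i$ for all $i$, i.e.\ $\nu_{i+j}-\nu_j\geq\nu_i$, and a further (nontrivial) combinatorial argument with $\tilde\nu_i=\lfloor a(i+1)/p\rfloor$ extracts $a\mid p-1$. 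Your alternative route --- a single ``defect'' index with $\lfloor v_L((\sigma-1)^i\alpha)/p\rfloor>c_i$ for every choice of $\alpha$ --- is neither exhibited nor obviously sufficient: without the triangularity modulo $\pi_K$ supplied by the ramification hypothesis, cancellations among the columns of the lattice $\mathfrak{A}_{L/K}\alpha$ are not excluded, so a defect in one coordinate does not immediately force $[\mathcal{O}_L:\mathfrak{A}_{L/K}\alpha]>1$. Until the structure of $\mathfrak{A}_{L/K}$ is derived and the necessity combinatorics is actually carried out, the proposal is a plausible plan rather than a proof.
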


\begin{remark}
The inequality $t < \frac{ep}{p-1} - 1$ corresponds to the condition that $L/K$ is not \textit{almost-maximally ramified}, see Definition \ref{def:AlmostMaximallyRamified}.
\end{remark}

Finally, a natural related question is to understand which elements generate $\mathcal{O}_K[G]$-submodules of minimal index. We obtain some partial results on this problem by exhibiting an explicit minimal element for cyclic extensions of degree $p$ (Proposition \ref{prop:aeq0} and Remark \ref{rmk:MinimalElement}).

\subsection*{Acknowledgements}
The authors are indebted to Henri Johnston for his many insightful comments, especially on Theorem \ref{thm:IntroGeneralBound}, and to Nigel Byott for helpful discussions and for pointing out some relevant results in the literature.  We are also grateful to Cornelius Greither for his comments on a preliminary version of the paper and for suggesting Remark \ref{rmk:greither}.

\section{Preliminaries}\label{sect:Preliminaries}
\subsection{Preliminaries on discriminants}\label{sub:discriminants}
We will use the notation fixed in the Introduction, so $L/K$ is  a Galois extension of $p$-adic fields with Galois group $G$.
The normal basis theorem ensures that $L/K$ admits a normal basis, namely a $K$-basis of the form 
$\{\sigma(\alpha)\}_{\sigma\in G}$ for some $\alpha\in L$ (equivalently, we have $L=K[G]\alpha$). In this case the element $\alpha$ is called a normal basis generator. Clearly, $\alpha$ can be chosen to lie in $\mathcal{O}_L$, and it is easy to see that $\alpha \in \mathcal{O}_L$ is a normal basis generator if and only if $\mathcal{O}_K[G]\alpha$ is a \emph{full} $\mathcal{O}_K$-lattice in $L$;
this happens precisely when $[\mathcal O_L:\mathcal{O}_K[G]\alpha]$ is  finite. Therefore, in order to compute $m(L/K)$, it suffices to consider elements $\alpha \in \mathcal{O}_L$ that are normal basis generators.

Let $X\supseteq Y$ be $\mathcal O_K$-lattices of the same rank ($\mathcal O_K$ is a principal ideal domain, so the rank of an $\mathcal O_K$-lattice, which is free, is well defined). Choosing two bases for $X$ and $Y$, we easily see that the quotient $X/Y$ is of the form $\oplus_{i=1}^r \mathcal O_K/(\pi_K^{a_i})$, hence in particular finite, and we define the \emph{module index} $[X:Y]_{\mathcal O_K}$ as the ideal of $\mathcal O_K$ generated by $\pi_K^{\sum_{i=1}^r a_i}$. The \emph{subgroup index} is instead defined as $[X:Y]:=|X/Y|$, and we have
\[
[X:Y]=N([X:Y]_{\mathcal O_K}),
\]
where $N$ is the ideal norm. 
More generally, if $V$ is a finite-dimensional $K$ vector space and $X$ and $Y$ are full $\mathcal{O}_K$-lattices in $V$, then we set
$[X:Y]_{\mathcal O_K}=[X:X\cap Y]_{\mathcal O_K}[Y:X\cap Y]^{-1}_{\mathcal O_K}$, and this is a fractional ideal of $\mathcal O_K$.
Note that there exists an invertible $K$-linear transformation of $V$ such
that $\varphi(X)=Y$, so that 
\begin{equation}
\label{eq:ind-det}
[X:Y]_{\mathcal O_K}=(\det \varphi)\mathcal O_K
\end{equation}
(see \cite{CF}  for details and \cite{DCDJA} for an overview). 

\begin{remark}\label{remark:nodvr}
Let $L/K$ be an extension of number fields and let $X, Y$ be full $\mathcal{O}_K$-lattices in $L$. In this setting, the index $[X:Y]_{\mathcal{O}_K}$ is defined to be the only ideal of $\mathcal{O}_K$ with the following property: for every maximal ideal $\mathfrak{p}$ of $\mathcal{O}_K$, the $\mathfrak{p}$-adic valuation of $[X:Y]_{\mathcal{O}_K}$ is equal to that of $[X\otimes_{\mathcal{O}_K}\mathcal{O}_{K_\mathfrak p}:Y\otimes_{\mathcal{O}_K}\mathcal{O}_{K_\mathfrak p}]_{\mathcal{O}_{K_\mathfrak p}}$, where $K_\mathfrak p$ is the completion of $K$ at $\mathfrak{p}$. Note that a formula analogous to \eqref{eq:ind-det} holds if $X$ and $Y$ are free over $\mathcal{O}_K$, which happens for instance whenever $K$ has class number $1$.
\end{remark}

Suppose now that $X$ is an $\mathcal O_K$-lattice and let $V := X \otimes_{\mathcal{O}_K} K$ be the corresponding finite-dimensional vector space. Let $B$ be a non-degenerate symmetric $K$-bilinear form on $V$. 
\begin{definition}\label{def:Discriminant}
We define the \emph{discriminant} of $X$ by means of the module index:
$$\disc_{B}(X):=[X':X]_{\mathcal O_K},$$
where $X'$ is  the dual module of $X$ with respect to $B$, namely 
$$X'=\{v\in V\mid B(v,X)\subseteq \mathcal O_K\}.$$
\end{definition}
With these definitions, for $Y\subseteq X$ we have the following proposition, which is a special case of \cite[Proposition 4 on p.12]{CF}.
\begin{proposition}\label{prop:disc} Let $X,Y$ be $\mathcal O_K$-lattices such that $ X \otimes_{\mathcal{O}_K} K = Y \otimes_{\mathcal{O}_K} K=:V$. Let $B$ be a non-degenerate symmetric bilinear form on $V$.
Then 
\begin{enumerate}
\item $\disc_{B}(Y)=[X:Y]_{\mathcal O_K}^2\disc_{B}(X).$
\item If $X$ is the free $\mathcal O_K$-module generated by $\{x_i\}_{i=1}^n$ then 
\[
\disc_K(X)=\det\{B(x_i,x_j)\}.
\]
\end{enumerate}
\end{proposition}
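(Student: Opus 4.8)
The plan is to base everything on the determinant description \eqref{eq:ind-det} of the module index, which is valid for arbitrary full $\mathcal O_K$-lattices in a finite-dimensional $K$-vector space $V$. First I would extract from it the \emph{multiplicativity} of the module index: for full lattices $A,B,C$ in $V$, picking automorphisms $\varphi,\psi$ of $V$ with $\varphi(A)=B$, $\psi(B)=C$ and using $\det(\psi\varphi)=\det\psi\det\varphi$ gives $[A:C]_{\mathcal O_K}=[A:B]_{\mathcal O_K}[B:C]_{\mathcal O_K}$, hence also $[B:A]_{\mathcal O_K}=[A:B]_{\mathcal O_K}^{-1}$. Next I would record how dual lattices transform: if $\varphi$ is an automorphism of $V$ and $\varphi^{*}$ its $B$-adjoint (so $B(\varphi^{*}v,w)=B(v,\varphi w)$ and $\det\varphi^{*}=\det\varphi$), then for $\varphi(X)=Y$ the equivalences $v\in Y' \iff B(v,\varphi X)\subseteq\mathcal O_K \iff B(\varphi^{*}v,X)\subseteq\mathcal O_K \iff \varphi^{*}v\in X'$ show $\varphi^{*}(Y')=X'$, so that by \eqref{eq:ind-det}
\[
[Y':X']_{\mathcal O_K}=(\det\varphi^{*})\,\mathcal O_K=(\det\varphi)\,\mathcal O_K=[X:Y]_{\mathcal O_K}.
\]

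For part (1), I would fix an automorphism $\varphi$ of $V$ with $\varphi(X)=Y$ and combine the above with two applications of two-term multiplicativity (no nestedness of the lattices being needed):
\[
\disc_{B}(Y)=[Y':Y]_{\mathcal O_K}=[Y':X']_{\mathcal O_K}\,[X':X]_{\mathcal O_K}\,[X:Y]_{\mathcal O_K}=[X:Y]_{\mathcal O_K}^{2}\,\disc_{B}(X).
\]
For part (2), write $X=\bigoplus_{i=1}^{n}\mathcal O_K x_i$. Non-degeneracy of $B$ produces the dual basis $\{x_i^{*}\}$ of $V$ characterised by $B(x_i^{*},x_j)=\delta_{ij}$, and writing $v=\sum_i a_i x_i^{*}$ and imposing $B(v,x_j)=a_j\in\mathcal O_K$ shows $X'=\bigoplus_i\mathcal O_K x_i^{*}$. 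Since $x_i=\sum_j B(x_i,x_j)\,x_j^{*}$, the automorphism of $V$ carrying the basis $\{x_j^{*}\}$ of $X'$ onto the basis $\{x_j\}$ of $X$ has determinant $\det\{B(x_i,x_j)\}$, so \eqref{eq:ind-det} yields $\disc_{B}(X)=[X':X]_{\mathcal O_K}=\bigl(\det\{B(x_i,x_j)\}\bigr)\mathcal O_K$.

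I do not anticipate a genuine obstacle here: the whole argument is bookkeeping organised around \eqref{eq:ind-det}. The only point requiring a little care is keeping the various module indices consistent when the lattices involved ($Y'$, $X'$, $X$, $Y$) are \emph{not} nested — which is exactly why I would take the determinant formula \eqref{eq:ind-det}, valid for arbitrary full lattices, as the backbone of the proof rather than the ``size of the finite quotient'' description, which applies only to nested pairs. As the authors note, the statement is in any case a special case of \cite[Proposition 4 on p.\ 12]{CF}, which could be cited directly.
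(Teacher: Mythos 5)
Your argument is correct. The paper itself does not spell out a proof of this proposition at all: it is stated as a special case of \cite[Proposition 4 on p.~12]{CF} and the reader is referred there. What you have written is a complete, self-contained verification along exactly the lines of the standard argument in that reference: multiplicativity of the generalised module index and the identity $[Y':X']_{\mathcal O_K}=[X:Y]_{\mathcal O_K}$ for duals, both derived from the determinant formula \eqref{eq:ind-det}, give part (1), and the computation of $X'$ via the $B$-dual basis, together with $x_i=\sum_j B(x_i,x_j)x_j^{*}$, gives part (2). All the individual steps check out: the adjoint $\varphi^{*}$ indeed satisfies $\det\varphi^{*}=\det\varphi$ (in coordinates $\varphi^{*}$ is $G^{-1}A^{T}G$ with $G$ the symmetric Gram matrix), the equivalence $v\in Y'\iff\varphi^{*}v\in X'$ together with invertibility of $\varphi^{*}$ yields $\varphi^{*}(Y')=X'$, and your use of \eqref{eq:ind-det} for non-nested lattices is legitimate since the paper states that formula for arbitrary full lattices. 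The only cosmetic remark is that in part (2) the conclusion should be read as an equality of ideals, $\disc_B(X)=\bigl(\det\{B(x_i,x_j)\}\bigr)\mathcal O_K$, which is the same mild abuse of notation as in the statement itself, and that the determinant is only well defined up to a unit of $\mathcal O_K$, which is harmless since only the ideal it generates is used.
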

\subsection{Preliminaries on ramification groups}
Let $L/K$ be a Galois extension of $p$-adic fields with Galois group $G$. %
\begin{definition}\label{def:RamificationGroups}
 For $i \geq -1$ we let
\[
G_i = \{g \in G : (g-1)(\mathcal{O}_L) \subseteq (\pi_L)^{i+1} \}
\]
be the $i$-th ramification group of $G$ in the lower numbering.
\end{definition}

Clearly, $G_{i+1}\unlhd G_i$ for each $i\ge-1$ and 
we say that $t$ is a (lower) ramification jump for the extension $L/K$  if $G_t\ne G_{t+1}$. It is well-known that for a fixed Galois extension $L/K$ the group $G_i$ is trivial for $i$ large enough (see \cite[Chapter 4]{MR554237} for details). The extension $L/K$ is called \textit{unramified} (respectively \textit{tamely ramified}, \textit{weakly ramified}) when $G_0$ (respectively $G_1, G_2$) is trivial.
The unique ramification jump of a ramified extension of degree $p$ has the following property.
\begin{proposition}[{\cite[Chapter IV, §2, Ex.~3]{MR554237} and \cite[III.2, Prop.~2.3]{MR1915966}}]\label{prop:InequalitiesOneAndt}
Let  $L/K$ be a cyclic ramified extension of degree $p$ and let $t$ be its ramification jump. Then:
\begin{enumerate}
\item the inequality $1 \leq t \leq \frac{e_Kp}{p-1}$ holds;
\item if $p \mid t$, then $t = \frac{e_Kp}{p - 1}$, the ground field $K$ contains the $p$-th
roots of unity, and there exists a uniformiser $\pi_K$ of $K$ such that $L = K(\pi_K^{1/p})$.
\end{enumerate}
\end{proposition}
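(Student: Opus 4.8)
The plan is to reduce to the totally ramified case, use the relation between the ramification break and the different, and then analyse the Eisenstein minimal polynomial of a uniformiser of $L$; the equality case will be handled by locating a $p$-th root of unity inside $L$ via Krasner's lemma and finishing with Kummer theory. Since $[L:K]=p$ is prime, a ramified extension of degree $p$ is automatically totally ramified (the ramification index divides $p$ and is $>1$), and a totally ramified extension of degree equal to the residue characteristic cannot be tame, so $G_1=G_0=G$, whence $t\ge 1$. Fix a uniformiser $\pi_L$ of $L$, so $\OO_L=\OO_K[\pi_L]$ and $\pi_L$ has Eisenstein minimal polynomial $g(X)=X^p+c_{p-1}X^{p-1}+\cdots+c_0$ over $K$; recall $\mathfrak d_{L/K}=(g'(\pi_L))$ and $v_L(\mathfrak d_{L/K})=\sum_{i\ge 0}(|G_i|-1)=(t+1)(p-1)$. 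We also use that $v_L(p)=e_L=pe_K$, that $v_L(\sigma(\pi_L)-\pi_L)=t+1$ for $\sigma\ne1$, and that $(\sigma-1)\p_L^m\subseteq\p_L^{m+t}$ (immediate from the former).

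The key point for part (1) is that the $p$ monomials of $g'(\pi_L)=p\pi_L^{p-1}+\sum_{i=1}^{p-1}i\,c_i\pi_L^{i-1}$ have pairwise distinct $v_L$-values modulo $p$: the $i$-th summand contributes a valuation $\equiv i-1\pmod p$ (since $i$ is a $p$-adic unit for $1\le i\le p-1$), while $v_L(p\pi_L^{p-1})=pe_K+p-1\equiv -1\pmod p$. Hence $v_L(g'(\pi_L))$ is the unique minimum of these valuations, so in particular $v_L(g'(\pi_L))\le pe_K+p-1$, which with $v_L(\mathfrak d_{L/K})=(t+1)(p-1)$ gives $t\le\frac{e_Kp}{p-1}$. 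If moreover $p\mid t$ then $(t+1)(p-1)\equiv -1\pmod p$, so the minimum is attained at the leading monomial; this forces $v_L(g'(\pi_L))=pe_K+p-1$ and therefore $t=\frac{e_Kp}{p-1}$, giving the first assertion of part (2).

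Now assume $t=\frac{e_Kp}{p-1}$. The equality $v_L(g'(\pi_L))=pe_K+p-1$ at the leading monomial forces $v_K(c_i)\ge e_K+1$ for $1\le i\le p-1$, and hence (from $g(\pi_L)=0$) $\pi_L^p\equiv -c_0\pmod{\p_L^{\,pe_K+p+1}}$, with $-c_0$ a uniformiser of $K$. Set $w=\sigma(\pi_L)/\pi_L-1$, so $v_L(w)=t$. Comparing the identity $(1+w)^p=\sigma(\pi_L^p)/\pi_L^p$ with this congruence gives $v_L\bigl((1+w)^p-1\bigr)\ge e_L+t+1$; expanding $(1+w)^p-1=w(p+w^{p-1})+\sum_{k=2}^{p-1}\binom pk w^k$ and using $(p-1)t=e_L$ then yields $w^{p-1}\equiv -p\pmod{\p_L^{\,e_L+1}}$. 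On the other hand, writing $\lambda_k=\zeta_p^k-1$ for a primitive $p$-th root of unity $\zeta_p^k$, the relation $(1+\lambda_k)^p=1$ forces $v_L(\lambda_k)=t$ and $\lambda_k^{p-1}\equiv -p\pmod{\p_L^{\,e_L+1}}$, while the residues $\overline{\lambda_k/\pi_L^{\,t}}$, $1\le k\le p-1$, are pairwise distinct (as $v_L(\zeta_p^j-\zeta_p^k)=t$ for $j\ne k$). Since $\overline{w/\pi_L^{\,t}}$ and these $p-1$ residues are all roots of $X^{p-1}-\overline{-p/\pi_L^{\,e_L}}$, a degree-$(p-1)$ polynomial over the residue field of $L$, we get $\overline{w/\pi_L^{\,t}}=\overline{\lambda_{k_0}/\pi_L^{\,t}}$ for some $k_0$, i.e.\ $v_L\bigl((1+w)-\zeta_p^{k_0}\bigr)>t=v_L(\zeta_p^{k_0}-\zeta_p^{k})$ for $k\ne k_0$. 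Krasner's lemma then puts $\zeta_p^{k_0}$, hence $\zeta_p$, in $L$; and since $\gal(L/K)$ acts on $\mu_p$ through the homomorphism $\gal(L/K)\to(\Z/p\Z)^\times$, necessarily trivial as $\gcd(p,p-1)=1$, we conclude $\zeta_p\in L^{\gal(L/K)}=K$.

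Finally, with $\zeta_p\in K$ and $t=\frac{e_Kp}{p-1}$, Kummer theory writes $L=K(\gamma^{1/p})$ for some $\gamma\in K^\times$. If $\gamma$ could be taken a unit, then $\gamma^{1/p}\in\OO_L^\times$ and $(\sigma-1)(\gamma^{1/p})=(\zeta_p^a-1)\gamma^{1/p}$ (with $a$ prime to $p$) has $v_L$-value exactly $v_L(\zeta_p-1)=\frac{e_Kp}{p-1}=t$, contradicting $(\sigma-1)\OO_L\subseteq\p_L^{t+1}$. Hence $v_K(\gamma)\not\equiv 0\pmod p$, and after raising $\gamma$ to a power prime to $p$ and multiplying by a suitable $p$-th power in $K^\times$ we may take $\gamma$ to be a uniformiser of $K$, so $L=K(\pi^{1/p})$. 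I expect the equality case to be the main obstacle: the Eisenstein congruence only tells us that $\pi_L^p$ is $p$-adically close to a uniformiser of $K$, which is too weak to feed Krasner's lemma directly, and the device is to transfer the information into the relation $w^{p-1}\equiv -p$, where the conjugates $\zeta_p^k-1$ sit at exactly the level $t$ of $w$ so that one residue-field coincidence is enough. The rest — the reductions, the valuation count for $g'(\pi_L)$, and the final Kummer step — is routine.
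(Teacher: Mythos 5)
Your proof is correct, and it is worth noting that the paper itself does not prove Proposition \ref{prop:InequalitiesOneAndt}: it quotes it from Serre and from Fesenko--Vostokov, so the right comparison is with those standard treatments, and your argument is essentially the classical one made self-contained. For (1) you compute $v_L(\mathfrak{d}_{L/K})$ in two ways --- as $(t+1)(p-1)$ from the ramification filtration and as $v_L(g'(\pi_L))$ from the Eisenstein polynomial, where the distinct-residues-mod-$p$ observation bounds the latter by $pe_K+p-1$; for (2) the congruence class of $(t+1)(p-1)$ modulo $p$ pins the minimum at the leading monomial, and then the residue-field coincidence of $\sigma(\pi_L)/\pi_L-1$ with some $\zeta_p^{k}-1$, Krasner's lemma, the coprimality of $p$ and $p-1$, and the Kummer-theoretic valuation argument finish the proof. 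The delicate points all check out: the extension of $v_L$ to $L(\zeta_p)$ enters only through valuation inequalities and a root count for a degree-$(p-1)$ polynomial over a field, and the final step correctly rules out unit radicands using $(\sigma-1)\mathcal{O}_L\subseteq\mathfrak{p}_L^{t+1}$. One presentational caveat: the inequality $v_L\bigl((1+w)^p-1\bigr)\ge e_L+t+1$ does \emph{not} follow from merely subtracting the congruence $\pi_L^p\equiv-c_0 \pmod{\mathfrak{p}_L^{pe_K+p+1}}$ from its image under $\sigma$ (that only yields $e_L+1$, which is too weak to isolate $w^{p-1}+p$); you must apply $\sigma-1$ to $\pi_L^p+c_0=-\sum_{i\ge1}c_i\pi_L^{i}$ and invoke $(\sigma-1)\mathfrak{p}_L^m\subseteq\mathfrak{p}_L^{m+t}$, a tool you do list in your preliminaries, so the claim is right but the sentence deriving it should be expanded to make this explicit.
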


\begin{definition}\label{def:AlmostMaximallyRamified}
Let $L/K$ be a ramified cyclic  extension of degree $p$. The extension $L/K$ is called \textit{maximally ramified} if its ramification jump assumes its maximum possible value, namely $t=\frac{e_Kp}{p-1}$, and is called \textit{almost-maximally ramified} if $t$ satisfies
\[
\frac{e_Kp}{p-1}-t\le1 . 
\]

More generally, a totally ramified cyclic extension $L/K$ of degree $dp^n$, with $(d,p)=1$, is  called \textit{almost-maximally ramified} if its first positive ramification jump $t_1$ satisfies 
\[
\frac{e_Kdp}{p-1}-t_1\le 1.
\]
\end{definition}

\begin{remark}
Jacobinski \cite{jac64} defined an extension $L/K$ with Galois group $G$ to be almost-maximally ramified if all idempotents $e_H=\frac1{|H|}\sum_{\sigma\in H}\sigma$  belong to the associated order $\mathfrak{A}_{L/K}$,  when $H$ ranges over all subgroups of $G$ included between two consecutive ramification groups of the extension. Using basic facts on ramification explained in \cite[Chapters 3 and 4]{MR554237}, one can verify that $e_H\in\mathfrak{A}_{L/K}$ if and only if
\begin{equation}\label{eq:RamificationInequality}
 \sum_{i=0}^\infty (|G_i(L/L^H)|-1)\geq e_Lv_p(|H|),
\end{equation}
where $L^H$ is the field fixed by $H$ and $G_i(L/L^H)$ the $i$-th ramification group of the extension $L/L^H$.
One may then show that Jacobinski's definition is equivalent to the inequalities in Definition \ref{def:AlmostMaximallyRamified}.
See \cite[$\S$1.2]{MR513880} or \cite[Proposition 1]{MR543208} for further details.
\end{remark}

\section{General results}
Let $L/K$ be a Galois extension of $p$-adic fields with group $G$.
As observed in the Introduction, the quantity $m(L/K)=\min_{\alpha \in \mathcal{O}_L} [\mathcal{O}_L : \mathcal{O}_K[G]  \alpha]$ is well defined as a minimum, and can be computed on the integers which are normal basis generators. In fact, these are precisely  the cases for which  $\mathcal{O}_K[G]  \alpha$ is a full $\mathcal{O}_K$-submodule of L. 

We are interested in bounding the value of  $m(L/K)$ in terms of the standard invariants of the extensions $L/K$ and $L/\Q_p$. Before doing this, we will show in Theorem \ref{thm:mLKEffectivelyComputable} that in principle the minimal index $m(L/K)$ can be effectively computed for any finite Galois extension $L/K$. 
In practice, direct computation of $m(L/K)$ with the algorithm we propose is limited to very simple extensions only. For this reason, an \textit{a priori} bound on $m(L/K)$ (such as in Theorem \ref{thm:IntroGeneralBound}) can be very useful both for the study of a single extension and for understanding the behaviour of $m(L/K)$ in families.%

\subsection{The minimal index $m(L/K)$ is effectively computable}

\begin{theorem}\label{thm:mLKEffectivelyComputable}
Let $L/K$ be a finite Galois extension of $p$-adic fields with Galois group $G$. The quantity $m(L/K)$ may be determined through a finite, effective procedure.
\end{theorem}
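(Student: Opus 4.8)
The plan is to reduce the computation of $m(L/K)$ to a finite search. The key observation is that, although a priori the minimum in the definition of $m(L/K)$ ranges over the infinite set $\mathcal{O}_L$, the index $[\mathcal{O}_L : \mathcal{O}_K[G]\alpha]$ (equivalently, the module index $[\mathcal{O}_L : \mathcal{O}_K[G]\alpha]_{\mathcal{O}_K}$) depends on $\alpha$ only through its class in a finite quotient. Concretely, I would first show that there is an explicit integer $N$ (depending only on the invariants of $L/K$, for instance one can take $N$ large enough that $(\pi_L)^N \subseteq \mathfrak{d}_{L/K}\cdot\mathcal{O}_L$, or more crudely any $N$ bounding the relevant $p$-adic valuations via Theorem \ref{thm:IntroGeneralBound}) with the property that if $\alpha \equiv \beta \pmod{\pi_L^N \mathcal{O}_L}$ and $\alpha$ is a normal basis generator, then $[\mathcal{O}_L : \mathcal{O}_K[G]\alpha] = [\mathcal{O}_L : \mathcal{O}_K[G]\beta]$. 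This follows because $\mathcal{O}_K[G](\alpha-\beta)$ is contained in a deeply enough power of $\pi_L$ that adding it does not change the lattice $\mathcal{O}_K[G]\alpha$ modulo the fixed lattice $\mathcal{O}_L$; one makes this precise using \eqref{eq:ind-det}, writing the index as the valuation of a determinant of a change-of-basis matrix and observing that perturbing $\alpha$ by something in $\pi_L^N\mathcal{O}_L$ perturbs that matrix by something that does not affect its Smith normal form over $\mathcal{O}_K$ once $N$ exceeds the largest elementary divisor exponent.

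Once this is established, the computation proceeds as follows. The quotient $\mathcal{O}_L/\pi_L^N\mathcal{O}_L$ is finite, of explicitly bounded cardinality $q^{N}$ where $q = |\mathcal{O}_K/\pi_K|^{f_{L/K}}$ (equivalently $|\mathcal{O}_L/\pi_L|$); choose a set of representatives $\alpha_1,\dots,\alpha_r$ in $\mathcal{O}_L$, which can be taken to be $\mathcal{O}_K$-linear combinations, with coefficients in a fixed finite set of representatives of $\mathcal{O}_K/\pi_K^M$ for suitable $M$, of a fixed $\mathcal{O}_K$-basis of $\mathcal{O}_L$. For each representative $\alpha_i$ one computes the $\mathcal{O}_K$-module $\mathcal{O}_K[G]\alpha_i$ — this is spanned by the finitely many elements $\sigma(\alpha_i)$, $\sigma\in G$, each expressed in the chosen $\mathcal{O}_K$-basis of $\mathcal{O}_L$ — then checks whether it is full rank (i.e.\ whether $\alpha_i$ is a normal basis generator, detectable by a nonzero determinant), and if so computes the index $[\mathcal{O}_L:\mathcal{O}_K[G]\alpha_i]$ via the Smith normal form, or via \eqref{eq:ind-det} and the ideal norm, of the matrix expressing the $\sigma(\alpha_i)$ in terms of the basis. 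Finally $m(L/K)$ is the minimum of these finitely many indices over the $i$ for which $\alpha_i$ is a normal basis generator. All the arithmetic takes place over $\mathcal{O}_K/\pi_K^M$ for an explicit $M$, so every step is a finite computation.

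The main obstacle is the first step: producing an honest, explicit bound $N$ and proving rigorously that the index is invariant modulo $\pi_L^N$. The subtlety is that $\mathcal{O}_K[G]\alpha$ can fail to be a lattice (when $\alpha$ is not a normal basis generator), so one must be careful to argue that, for $\alpha$ a normal basis generator, a perturbation by $\pi_L^N\mathcal{O}_L$ keeps it a normal basis generator with the same index — while perturbations of non-generators are simply discarded. A clean way to handle this is to bound uniformly the largest elementary divisor that can occur for any normal basis generator: by Proposition \ref{prop:disc}, $[\mathcal{O}_L : \mathcal{O}_K[G]\alpha]_{\mathcal{O}_K}^2 = \disc_{B}(\mathcal{O}_K[G]\alpha)/\disc_{B}(\mathcal{O}_L)$ for the trace form $B$, and the numerator is controlled since $\mathcal{O}_K[G]\alpha \subseteq \mathcal{O}_L$ forces $\disc_B(\mathcal{O}_K[G]\alpha)$ to be divisible by $\disc_B(\mathcal{O}_L)$ and bounded below in valuation; combined with Theorem \ref{thm:IntroGeneralBound} this pins down $N$. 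Once $N$ is in hand the remaining steps are routine linear algebra over a finite ring.
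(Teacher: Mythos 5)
Your overall scheme (write the index of $\mathcal{O}_K[G]\alpha$ as the valuation of a determinant in a fixed $\mathcal{O}_K$-basis of $\mathcal{O}_L$, then reduce the minimisation to a finite search over residues to a computable precision) is the same mechanism as the paper's proof; the one genuinely different ingredient is that you fix the precision a priori via Theorem \ref{thm:IntroGeneralBound}, whereas the paper bootstraps: it first effectively exhibits one normal basis generator $\omega_0$, computes the valuation $R_0$ of its index, and then works modulo $\pi_K^{R_0+1}$. Your variant is legitimate (Theorem \ref{thm:IntroGeneralBound} is proved in Subsection \ref{sect:GeneralBound} independently of effectivity, so there is no circularity) and it spares you the search for an initial generator. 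However, your key lemma is false as stated: there is no $N$ depending only on the invariants of $L/K$ such that $\alpha\equiv\beta\pmod{\pi_L^N\mathcal{O}_L}$ with $\alpha$ a normal basis generator forces $[\mathcal{O}_L:\mathcal{O}_K[G]\alpha]=[\mathcal{O}_L:\mathcal{O}_K[G]\beta]$. The indices of normal basis generators are unbounded: if $\alpha_0$ is a generator, so is $\pi_K^N\alpha_0$, whose index has valuation at least $[L:K]\cdot N$, and perturbing $\pi_K^N\alpha_0$ by elements of $\pi_K^N\mathcal{O}_L\subseteq\pi_L^N\mathcal{O}_L$ does change the index. For the same reason there is no uniform bound on ``the largest elementary divisor that can occur for any normal basis generator'', so the proposed route through Proposition \ref{prop:disc} cannot be completed; note also that the divisibility of $\disc_B(\mathcal{O}_K[G]\alpha)$ by $\disc_B(\mathcal{O}_L)$ only bounds the index from below, which is not what you need.

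The gap is repairable inside your own framework, and the repair is exactly the subtlety the paper's proof handles with its comparison of the quantities $r_i$, $R_i$, $R_0$. What you actually need are two one-sided statements: (a) every chosen representative that generates a normal basis has index at least $m(L/K)$ (trivial, since it is an element of $\mathcal{O}_L$); and (b) if $\beta$ attains $m(L/K)$ and $M$ is a computable integer with $M> v_K\left([\mathcal{O}_L:\mathcal{O}_K[G]\beta]_{\mathcal{O}_K}\right)$ --- which Theorem \ref{thm:IntroGeneralBound} supplies after converting $v_p$ into $v_K$ by dividing by $f_K$ --- then the representative of the class of $\beta$ modulo $\pi_K^{M}\mathcal{O}_L$ has its determinant congruent to that of $\beta$ modulo $\pi_K^{M}$, hence of the same valuation, so the minimum over the finitely many representatives equals $m(L/K)$. (It is cleaner to work modulo $\pi_K^M$ rather than $\pi_L^N$: congruence of elements modulo $\pi_K^M\mathcal{O}_L$ translates directly into congruence of their coordinate vectors modulo $\pi_K^M$, which is what the determinant argument behind \eqref{eq:ind-det} uses; with $\pi_L$ you would need $N\geq e_{L/K}M$.) With the invariance lemma restated in this weaker, element-dependent form, your algorithm is correct.
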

\begin{proof}
We fix a basis $\alpha_1,\ldots,\alpha_n$ of $\mathcal{O}_L$ over $\mathcal{O}_K$ and use it to identify $L$ with $K^n$ and $\mathcal{O}_L$ with $\mathcal{O}_K^n$:
for a vector $v=(c_{1},\ldots,c_{n}) \in \mathcal{O}_K^n$, we let $\omega(v) \in \mathcal{O}_L$ be the element $\sum_{i=1}^n c_i \alpha_i$.
Note that an integral basis may be computed effectively, for example using the Montes algorithm \cite{MR3105943, MR3276340}. 

Let $g_1=\operatorname{Id},\ldots,g_n$ be an enumeration of the elements of $G$. Every $g_i$ corresponds to a $K$-linear transformation of $K^n$, which can be represented by a matrix $M_i \in \operatorname{Mat}_{n}(K)$. Notice that  $M_k=\{m_{ij}^{(k)}\}$ where $g_k(\alpha_j) = \sum_{i=1}^n m_{ij}^{(k)} \alpha_i$. Since the $\alpha_j$'s are a basis of $\mathcal{O}_L$ over  $\mathcal{O}_K$ it follows that  the entries of $M_k$ lie in $\mathcal{O}_K$.

Let $\omega_1, \omega_2 \in \mathcal{O}_L$ and let $v_1, v_2$ be the vectors of the coordinates of $\omega_1,\omega_2$ in the basis $\alpha_j$. Assume that   $\omega_1$ generates a normal basis and let $[\mathcal{O}_L : \mathcal{O}_K[G]\omega_1]_{\mathcal{O}_K} = \pi_K^{R}\mathcal{O}_K$.   With the previous notation,  for $i=1,2,$  we have 
\begin{equation}
\label{indici}
[\mathcal{O}_L : \mathcal{O}_K[G] \omega_i] = N\left( \det \left( v_i \bigm\vert M_2v_i \bigm\vert \cdots \bigm\vert M_n v_i \right) \right).
\end{equation}

We claim that if  $v_2 \equiv v_1 \pmod{\pi^{R+1}}$ then 
\[
[\mathcal{O}_L : \mathcal{O}_K[G]   \omega_2]_{\mathcal{O}_K} =[\mathcal{O}_L : \mathcal{O}_K[G]   \omega_1]_{\mathcal{O}_K}:
\]
indeed from $v_2 \equiv v_1 \pmod{\pi_K^{R+1}}$ we obtain $M_k v_1 \equiv M_kv_2 \pmod{\pi_K^{R+1}}$ for every $k$, so the claim follows from Equation \eqref{indici}.

We turn to the description of a possible algorithm to compute $m(L/K)$. First we claim that one can effectively find a normal basis generator $\omega_0$ of $L/K$. The usual proof of the normal basis theorem (see for example \cite[Theorem VI.13.1]{MR1878556}) shows that there exists a non-zero (and effectively computable) polynomial $p(x_1,\ldots,x_a) \in K[x_1,\ldots,x_a]$ such that all elements $\omega=\sum_{i=1}^a c_i\alpha_i$ in $L$ that do \textit{not} generate a normal basis for $L/K$ satisfy $p(c_1,\ldots,c_a)=0$. Hence it suffices to find a point where $p(x_1,\ldots,x_n)$ does not vanish, and it is enough to try sufficiently many values of $x_1,\ldots,x_n$ to find some $v_0:=(c_1,\ldots,c_n) \in \mathcal{O}_K^n$ for which $p(c_1,\ldots,c_n) \neq 0$. The element $\omega_0 := \omega(v_0) \in \mathcal{O}_L$ is then a normal basis generator for $L/K$.
Next we compute the positive integer $R_0$ defined by the equality $[\mathcal{O}_L : \mathcal{O}_K[G]   \omega_0]=\pi_K^{R_0}\mathcal{O}_K$. This can also be done effectively: indeed, it suffices to compute the determinant
\[
\det\left( v_0 \bigm\vert M_2v_0 \bigm\vert \cdots \bigm\vert M_nv_0 \right)
\]
to sufficient $\pi_K$-adic precision to ensure that one can determine its $\pi_K$-adic valuation.

Fix representatives $v_1,\ldots,v_s$ of the (finitely many) residue classes in $\mathcal{O}_K^n/(\pi_K^{R_0+1} \mathcal{O}_K)^{n}$, and for every $i=1,\ldots,s$ let $\omega_i=\omega(v_i)$. Write $[\mathcal{O}_L : \mathcal{O}_K[G]   \omega_i]=\pi_K^{R_i} \mathcal{O}_K$. By the same argument as above, each of the finitely many $R_i$ is effectively computable, and there is no loss of generality in assuming that there is an index $i_0$ for which $\omega_{i_0}=\omega_0$.
We claim that $m(L/K)$ is equal to the norm of the ideal $\pi_K^{\min_i R_i}$.
To prove this, let
\[
r_i = \min_{v \equiv v_i \bmod{\pi_K^{R_0+1}}} v_K [\mathcal{O}_L : \mathcal{O}_K[G]   \omega(v)]_{\mathcal{O}_K},
\]
where the minimum is taken over all vectors $v \in \mathcal{O}_K^n$ all of whose coordinates are congruent to the respective coordinates of $v_i$ modulo $\pi_K^{R_0+1}$.
We now show that either $r_i=R_i$ or $r_i \geq R_0$. 
Indeed,  it is clear by definition that $r_i \leq R_i$. So $r_i=R_i$ or  $r_i < R_i$, and in this last case we can show that  $r_i \geq R_0$. In fact, if we had $r_i < R_0$ the index would be constant on the coset $v_i +\pi_K^{R_0+1}$, and this is not the case since we are assuming $r_i< R_i$. 
This proves that for all $i$ we have
\[
\min\{r_i, R_0\} = \min\{R_i, R_0 \};
\]
note in particular that for the index $i_0$ such that $\omega_i=\omega_0$ we have $r_{i_0}=R_{i_0}=R_0$.
Taking the minimum over all $i$ now yields
\[
\begin{aligned}
\min_{\omega \in \mathcal{O}_L} v_K[\mathcal{O}_L : \mathcal{O}_K[G]   \omega]_{\mathcal{O}_K} %
& = \min_i r_i = \min_i \min\{r_i, r_{i_0}\} \\
& = \min_i \min\{R_i, R_0\} = \min_i R_i,
\end{aligned}
\]
hence $m(L/K) = N\left( \pi_K^{\min_i R_i} \right)$. Since the finitely many quantities $R_i$ are effectively computable, so is $m(L/K)$, as claimed.
\end{proof}

\subsection{The associated order}
 We briefly review the notion of \textit{associated order} in the general setting of Dedekind domains that we will need later. We also show that the quantity $m(L/K)$ may be expressed in a natural way in terms of the associated order. The following result is well known.
\begin{proposition}\label{associatedfree}
Let $R$ be a Dedekind domain with field of fractions $K$, and let $A$ be a finite-dimensional $K$-algebra. Let $N$ be an $R$-lattice such that $K  N:=K \otimes_{R} N \cong A$ as $A$-modules. Finally, suppose that $\Lambda$ is an $R$-order in $A$ such that $N$ is a free $\Lambda$-module. Then $\Lambda$ is equal to the associated order of $N$ in $A$:
\[
 \Lambda=\mathfrak{A}(A, N) := \{ \lambda \in  A \mid \lambda N \subseteq N \}.
\]
\end{proposition}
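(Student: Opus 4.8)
The plan is to reduce to the case in which $N$ is free of rank exactly $1$ over $\Lambda$, and then to exploit the fact that a free generator of $N$ over $\Lambda$ is simultaneously a free generator of $KN\cong A$ over $A$; this last point forces every element of $A$ stabilising $N$ to already lie in $\Lambda$.

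First I would dispose of the easy inclusion: since by hypothesis $N$ is a $\Lambda$-module, every $\lambda\in\Lambda$ satisfies $\lambda N\subseteq N$, so $\Lambda\subseteq\mathfrak{A}(A,N)$ (here one uses $\Lambda\subseteq A$ and the identification of $N$ with a full $R$-lattice inside $KN\cong A$). For the reverse inclusion I would first pin down the rank. If $A=0$ then $N=0$ and the statement is trivial, so assume $A\neq 0$. As $N$ is finitely generated over $R$ it is free of finite rank over $\Lambda$; writing $N\cong\Lambda^{r}$ as $\Lambda$-modules, tensoring with $K$ over $R$, and using that $\Lambda$ is an $R$-order in $A$ so that $K\otimes_{R}\Lambda=A$, one obtains $A\cong KN\cong A^{r}$ as $A$-modules. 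Comparing $K$-dimensions and using $0<\dim_{K}A<\infty$ forces $r=1$. Hence $N=\Lambda\beta$ for some $\beta\in N$ such that $\lambda\mapsto\lambda\beta$ is an isomorphism $\Lambda\xrightarrow{\sim}N$ of $\Lambda$-modules. Tensoring this isomorphism with $K$ and using that $-\otimes_{R}K$ preserves isomorphisms, I get that $c\mapsto c\beta$ is an isomorphism $A\xrightarrow{\sim}KN$; in particular $c\beta=0$ with $c\in A$ forces $c=0$.

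The proof then concludes quickly: given $\mu\in\mathfrak{A}(A,N)$, we have $\mu\beta\in N=\Lambda\beta$, so $\mu\beta=\lambda\beta$ for some $\lambda\in\Lambda$; hence $(\mu-\lambda)\beta=0$ in $KN$, and therefore $\mu=\lambda\in\Lambda$ by the injectivity just established. This yields $\mathfrak{A}(A,N)\subseteq\Lambda$ and finishes the argument.

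The only point requiring a little care — the ``hard part'', such as it is — is that $A$ is allowed to be noncommutative (in the application $A=K[G]$ with $G$ possibly non-abelian), so the reduction to rank $1$ and the implication $c\beta=0\Rightarrow c=0$ should be phrased entirely in terms of left $A$-module homomorphisms and $K$-dimension counts, rather than by invoking any non-zero-divisor property of $\beta$ inside the ring $A$.
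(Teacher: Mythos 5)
Your argument is correct and essentially identical to the paper's proof: both use that a free $\Lambda$-generator $\beta$ of $N$ is simultaneously a free $A$-generator of $KN$, so that $\mu\beta=\lambda\beta$ forces $\mu=\lambda$, while the easy inclusion $\Lambda\subseteq\mathfrak{A}(A,N)$ is immediate. Your explicit dimension count showing the rank must be $1$ is a small addition that the paper's proof takes for granted, but it does not change the route.
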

\begin{proof}
By assumption there exists $\alpha\in N$ such that $N=\Lambda \alpha$ is a free $\Lambda$-module. Then $\alpha$ is also such that $K  N=A \alpha$ is free over $A$. The inclusion $\Lambda\subseteq \mathfrak{A}(A, N)$ is clear. Conversely, let $a\in \mathfrak{A}(A, N)$; then $a\alpha\in N=\Lambda \alpha$, so there exists $\lambda\in\Lambda$ such that $a\alpha =\lambda\alpha$ in $A$. Since $K  N$ is freely generated by $\alpha$, this means that $a=\lambda\in \Lambda$.
\end{proof}

\begin{proposition}\label{productindex}
Let $L/K$ be a Galois extension of $p$-adic fields with Galois group $G$. Then 
\[
m(L/K) = [\mathfrak{A}_{L/K}:\mathcal{O}_K[G]] \cdot \min_{\alpha \in \mathcal{O}_L} [\mathcal{O}_L : \mathfrak{A}_{L/K}   \alpha].
\] 
In particular, $\mathcal{O}_L$ is free over $\mathfrak{A}_{L/K}$ if and only if $m(L/K) = [\mathfrak{A}_{L/K}:\mathcal{O}_K[G]]$. In this case, given an element $\beta$ that realises the minimal index $m(L/K)$, the elements realising $m(L/K)$ are precisely those of the form $\lambda \beta$ for $\lambda \in \mathfrak{A}_{L/K}^*$. Equivalently, an element $\beta \in \mathcal{O}_L$ realises the minimum if and only if it generates $\mathcal{O}_L$ over $\mathfrak{A}_{L/K}$.
\end{proposition}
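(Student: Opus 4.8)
The plan is to work locally, passing through the module index over $\mathcal{O}_K$ and using that $\mathfrak{A}_{L/K}$ is itself an $\mathcal{O}_K$-order in $K[G]$ containing $\mathcal{O}_K[G]$. First I would fix any normal basis generator $\alpha \in \mathcal{O}_L$ (which exists by the normal basis theorem, and which we may take in $\mathcal{O}_L$). Since $\mathcal{O}_K[G] \subseteq \mathfrak{A}_{L/K}$ and both are full $\mathcal{O}_K$-lattices in $K[G]$, multiplication by $\alpha$ gives an injection of $K[G]$ into $L$ (by the normal basis theorem $K[G]\alpha = L$), under which $\mathcal{O}_K[G]\alpha \subseteq \mathfrak{A}_{L/K}\alpha \subseteq \mathcal{O}_L$ are full $\mathcal{O}_K$-lattices in $L$. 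The multiplicativity of the module index (a special case of \eqref{eq:ind-det}, since one can compose the relevant $K$-linear transformations) then gives
\[
[\mathcal{O}_L : \mathcal{O}_K[G]\alpha]_{\mathcal{O}_K} = [\mathcal{O}_L : \mathfrak{A}_{L/K}\alpha]_{\mathcal{O}_K} \cdot [\mathfrak{A}_{L/K}\alpha : \mathcal{O}_K[G]\alpha]_{\mathcal{O}_K}.
\]
The key observation is that the last factor is independent of $\alpha$: multiplication by $\alpha$ is a $K$-linear isomorphism $K[G] \to L$ carrying $\mathfrak{A}_{L/K}$ to $\mathfrak{A}_{L/K}\alpha$ and $\mathcal{O}_K[G]$ to $\mathcal{O}_K[G]\alpha$, so by \eqref{eq:ind-det} the module index $[\mathfrak{A}_{L/K}\alpha : \mathcal{O}_K[G]\alpha]_{\mathcal{O}_K}$ equals $[\mathfrak{A}_{L/K} : \mathcal{O}_K[G]]_{\mathcal{O}_K}$. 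Applying the ideal norm $N$ and minimising over $\alpha$ yields the displayed formula $m(L/K) = [\mathfrak{A}_{L/K}:\mathcal{O}_K[G]] \cdot \min_{\alpha} [\mathcal{O}_L : \mathfrak{A}_{L/K}\alpha]$.

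For the freeness criterion: $\min_\alpha [\mathcal{O}_L : \mathfrak{A}_{L/K}\alpha] \geq 1$ always, with equality if and only if some $\mathfrak{A}_{L/K}\alpha$ equals $\mathcal{O}_L$, i.e.\ if and only if $\mathcal{O}_L$ is free of rank $1$ over $\mathfrak{A}_{L/K}$ (here I use that $K \otimes_{\mathcal{O}_K} \mathcal{O}_L = L \cong K[G]$ as $K[G]$-modules, so any $\mathfrak{A}_{L/K}$-generating set of $\mathcal{O}_L$ has one element). Combining with the index formula, $\mathcal{O}_L$ is free over $\mathfrak{A}_{L/K}$ exactly when $m(L/K) = [\mathfrak{A}_{L/K}:\mathcal{O}_K[G]]$.

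Finally, assume $\mathcal{O}_L = \mathfrak{A}_{L/K}\beta$ is free, and suppose $\alpha \in \mathcal{O}_L$ realises $m(L/K)$; by the index formula this forces $[\mathcal{O}_L:\mathfrak{A}_{L/K}\alpha] = 1$, i.e.\ $\mathfrak{A}_{L/K}\alpha = \mathcal{O}_L = \mathfrak{A}_{L/K}\beta$. Writing $\alpha = \lambda\beta$ and $\beta = \mu\alpha$ with $\lambda,\mu \in \mathfrak{A}_{L/K}$ and using that $\alpha$ (equivalently $\beta$) is a free generator over $\mathfrak{A}_{L/K}$ — in particular a $K[G]$-basis of $L$ — we get $\lambda\mu = 1$ in $K[G]$, hence $\lambda \in \mathfrak{A}_{L/K}^*$; conversely any such $\lambda\beta$ clearly generates $\mathcal{O}_L$ over $\mathfrak{A}_{L/K}$ and so realises the minimum. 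The main point requiring care is the multiplicativity/translation-invariance of the module index for lattices in $L$ that are \emph{not} a priori contained in one another in a compatible way — this is handled cleanly by passing everything through the isomorphism "multiplication by $\alpha$" and invoking \eqref{eq:ind-det} — together with the (standard) fact that $\mathfrak{A}_{L/K}$ is an $\mathcal{O}_K$-order, so that all four lattices appearing have full rank.
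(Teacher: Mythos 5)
Your proof is correct and takes essentially the same route as the paper: both factor the index through the chain $\mathcal{O}_K[G]\alpha \subseteq \mathfrak{A}_{L/K}\alpha \subseteq \mathcal{O}_L$ and use that multiplication by a normal basis generator is a $K$-linear isomorphism of $K[G]$ onto $L$ to identify $[\mathfrak{A}_{L/K}\alpha : \mathcal{O}_K[G]\alpha]$ with the constant $[\mathfrak{A}_{L/K} : \mathcal{O}_K[G]]$ before minimising. Your write-up is, if anything, slightly more explicit than the paper on the freeness criterion and on the characterisation of the minimising elements as $\lambda\beta$ with $\lambda \in \mathfrak{A}_{L/K}^*$, which the paper leaves largely implicit.
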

\begin{proof}
Let $\beta\in \mathcal{O}_L$ be an element such that
\[
m(L/K) = [\mathcal{O}_L : \mathcal{O}_K[G]   \beta].
\] 
By the definition of the associated order, we have that $\mathfrak{A}_{L/K}   \beta\subseteq \mathcal {O}_L$, so 
\[
 m(L/K) = [\mathcal{O}_L : \mathfrak{A}_{L/K}   \beta]\cdot [\mathfrak{A}_{L/K}   \beta : \mathcal{O}_K[G]   \beta].
\]
Since 
$\beta$ generates a normal basis for $L/K$, we have that the $K$-linear map  $\varphi_\beta \colon K[G]\to K[G] \beta$ defined by $x\mapsto x\beta$ is nonsingular,  so $[\mathfrak{A}_{L/K}   \beta : \mathcal{O}_K[G]   \beta]=[\mathfrak{A}_{L/K}: \mathcal{O}_K[G]]$. The result follows.
\end{proof}

\begin{remark}\label{rmk:SeparateIndices}
Proposition \ref{productindex} shows that every index $[\mathcal{O}_L : \mathcal{O}_K[G]   \alpha]$ differs from the corresponding index $[\mathcal{O}_L : \mathfrak{A}_{L/K}   \alpha]$ by a constant independent of $\alpha$.
Combined with Proposition \ref{associatedfree}, this suggests that it may be more natural to study separately the two quantities $[\mathfrak{A}_{L/K} : \mathcal{O}_K[G]]$ and $\min_{\alpha \in \mathcal{O}_L} [\mathcal{O}_L : \mathfrak{A}_{L/K}   \alpha]$.
Therefore our results can be also read as bounds on the minimum of $[\mathcal{O}_L : \mathfrak{A}_{L/K}   \alpha]$.
\end{remark}

\subsection{Index of $\mathcal{O}_K[G]$ in a maximal order}\label{indexmaximal}
In the spirit of Remark \ref{rmk:SeparateIndices} we now give a simple upper bound on $[\mathfrak{A}_{L/K} : \mathcal{O}_K[G]]$ by estimating the index of $\mathcal{O}_K[G]$ in a \textit{maximal} $\mathcal{O}_K$-order $\mathfrak{M}$ in which it is contained.

We consider on $K[G]$ the bilinear form $(x,y) \mapsto \operatorname{tr}(xy)$, where we denote by $\operatorname{tr}(z)$ the trace of multiplication by $z$ on the $K$-vector space $K[G]$; it is nondegenerate since $K$ has characteristic $0$, so it allows us to compute discriminants of lattices in the sense of Definition \ref{def:Discriminant}. 
Let $\underline{m} = [\mathfrak{M} : \mathcal{O}_K[G]]_{\mathcal{O}_K}$. By Proposition \ref{prop:disc} we have an equality of ideals
\begin{equation}\label{eq:DiscIndex}
\underline{m}^2 \cdot \operatorname{disc} \mathfrak{M} = \operatorname{disc} \mathcal{O}_K[G],
\end{equation}
and writing $\underline{m} = \pi_K^A \mathcal{O}_K$ we obtain
\begin{equation}\label{eq:IndexIdealNumber}
[\mathfrak{M} : \mathcal{O}_K[G]] = N(\underline{m})= p^{f_KA}.
\end{equation}
We now show that $\operatorname{disc} \mathcal{O}_K[G]=|G|^{|G|} \mathcal{O}_K$. We claim that the trace of an element $g \in G \subset \mathcal{O}_K[G]$ is 0 if $g \neq \operatorname{Id}$ and is $|G|$ for $g=\operatorname{Id}$. To see this, notice that an $\mathcal{O}_K$-basis of $\mathcal{O}_K[G]$ is given by $\{g \in G\}$, and in this basis the left multiplication action by $g$ is represented by a permutation matrix, whose trace equals the number of fixed points of the map $h \mapsto gh$. The group axioms imply that either $h \mapsto gh$ has no fixed points ($g \neq \operatorname{Id}$), or every element of $G$ is fixed ($g=\operatorname{Id}$). This proves the claim.
In particular, the Gram matrix of the bilinear form $(x,y) \mapsto \operatorname{tr}(x,y)$ in this basis is
\[
\left( \operatorname{tr}(gh) \right)_{g, h \in G} = |G| \left(  \delta_{gh,\operatorname{id}} \right)_{g, h \in G},
\]
whose determinant is clearly $\pm |G|^{|G|}$, because $\left(  \delta_{gh,\operatorname{id}} \right)_{g, h \in G}$ is a permutation matrix.
From Equation \eqref{eq:DiscIndex} we obtain
\[
\underline{m}^2 = \frac{|G|^{|G|} \mathcal{O}_K}{\operatorname{disc} \mathfrak{M}} \bigm\vert |G|^{|G|} \mathcal{O}_K,
\]
so (writing again $\underline{m}=\pi_K^{A}\mathcal{O}_K$) we have $A \leq\frac12|G| e_K v_p(|G|)$. Via Equation \eqref{eq:IndexIdealNumber}, this implies 
\begin{equation}\label{eq:IndexMaximalOrderBound}
v_p\left(\#\frac{\mathfrak{M}}{\mathcal{O}_K[G]}\right) \leq \frac{1}{2} [K:\mathbb{Q}_p] \cdot |G| \cdot v_p(|G|).
\end{equation}
We summarise these results in the following Proposition.
\begin{proposition}\label{prop:MaximalOrder}
Let $\mathfrak{M}$ be a maximal order of $K[G]$. We have $[\mathfrak{M} : \mathcal{O}_K[G]]_{\mathcal{O}_K}=\pi_K^{A}\mathcal{O}_K$ with $A=\frac{1}{2}|G|e_K v_p(|G|) -\frac{1}{2} v_K(\operatorname{disc} \mathfrak{M} ) \leq \frac{1}{2}|G|e_K v_p(|G|)$.
\end{proposition}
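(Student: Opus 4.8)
The plan is to extract all three assertions from the discriminant computation performed just above the statement, so that very little genuinely new work is required. First I would record that $\mathcal{O}_K[G]\subseteq\mathfrak{M}$ and that both are full $\mathcal{O}_K$-lattices in $K[G]$. Since $\mathcal{O}_K$ is a discrete valuation ring, the module index $\underline{m}:=[\mathfrak{M}:\mathcal{O}_K[G]]_{\mathcal{O}_K}$ is then an honest integral ideal, necessarily of the form $\pi_K^A\mathcal{O}_K$ for a unique integer $A\geq 0$; this already gives the shape of the index asserted in the statement.

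Next I would put on $K[G]$ the nondegenerate bilinear form $(x,y)\mapsto\operatorname{tr}(xy)$ and apply Proposition~\ref{prop:disc}(1) with $X=\mathfrak{M}$, $Y=\mathcal{O}_K[G]$, obtaining $\operatorname{disc}(\mathcal{O}_K[G])=\underline{m}^2\operatorname{disc}(\mathfrak{M})=\pi_K^{2A}\operatorname{disc}(\mathfrak{M})$. Then I would compute $\operatorname{disc}(\mathcal{O}_K[G])$ from Proposition~\ref{prop:disc}(2) applied to the $\mathcal{O}_K$-basis $G$: since left multiplication by $g\in G$ permutes this basis and the translation action of $G$ on itself is free, one has $\operatorname{tr}(g)=|G|$ for $g=\operatorname{Id}$ and $\operatorname{tr}(g)=0$ otherwise, so the Gram matrix is $|G|\left(\delta_{gh,\operatorname{id}}\right)_{g,h\in G}$ with determinant $\pm|G|^{|G|}$, whence $\operatorname{disc}(\mathcal{O}_K[G])=|G|^{|G|}\mathcal{O}_K$. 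Taking $v_K$ of the identity above and using $v_K(|G|^{|G|})=|G|\,v_K(|G|)=|G|e_K v_p(|G|)$, I get $2A=|G|e_K v_p(|G|)-v_K(\operatorname{disc}\mathfrak{M})$, which is the claimed formula for $A$ after dividing by $2$.

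For the inequality I would use that $\mathfrak{M}$, being an $\mathcal{O}_K$-order, is closed under multiplication: for $x,y\in\mathfrak{M}$, multiplication by $xy$ preserves the free $\mathcal{O}_K$-module $\mathfrak{M}$, so $\operatorname{tr}(xy)\in\mathcal{O}_K$, i.e.\ $\mathfrak{M}$ is contained in its dual $\mathfrak{M}'$ for the trace form. Hence $\operatorname{disc}(\mathfrak{M})=[\mathfrak{M}':\mathfrak{M}]_{\mathcal{O}_K}$ is an integral ideal, so $v_K(\operatorname{disc}\mathfrak{M})\geq 0$ and therefore $A\leq\frac12|G|e_K v_p(|G|)$.

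Since the computational heart of the argument is already in place, I do not expect any real obstacle; the one step that needs a short argument rather than a bare citation is the evaluation $\operatorname{tr}(g)=|G|\delta_{g,\operatorname{Id}}$, which is immediate once one observes that the left-translation action of a group on itself has no fixed points unless the acting element is the identity, together with the remark that $\operatorname{disc}(\mathfrak{M})$ is integral because $\mathfrak{M}$ is an order.
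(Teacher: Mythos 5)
Your proposal is correct and follows essentially the same route as the paper: the trace form on $K[G]$, Proposition \ref{prop:disc} relating $\operatorname{disc}(\mathcal{O}_K[G])$ to $\underline{m}^2\operatorname{disc}(\mathfrak{M})$, and the Gram-matrix computation $\operatorname{disc}(\mathcal{O}_K[G])=|G|^{|G|}\mathcal{O}_K$. Your only addition is the explicit observation that $\mathfrak{M}\subseteq\mathfrak{M}'$ makes $\operatorname{disc}(\mathfrak{M})$ integral, a point the paper uses implicitly when dividing by $\operatorname{disc}\mathfrak{M}$, so this is a welcome but minor clarification.
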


\subsection{Proof of Theorem \ref{thm:IntroGeneralBound} }
\label{sect:GeneralBound}
In this section we prove a bound on $m(L/K)$ which holds in complete generality.
Let $\mathfrak{B}$ be an $\mathcal{O}_K$-order in $K[G]$ containing $\mathcal{O}_K[G]$. We define $\mathfrak{B}\mathcal{O}_L$ to be the $\mathfrak{B}$-lattice generated by $\mathcal{O}_L$, that is, 
\[
 \mathfrak{B} \mathcal{O}_L=\left\{\sum_i b_ix_i\right\}_{b_i\in \mathfrak{B},x_i\in\mathcal{O}_L}\subseteq L,
\]
and suppose that
$\mathfrak{B}  \mathcal{O}_L$ is a free $\mathfrak{B}$-module of rank $1$. We note that
 every maximal order $\mathfrak{B}$ has this property.  Indeed, let $\mathfrak{B}$ be a maximal order of the separable $K$-algebra $K[G]$; by the normal basis theorem $K\mathfrak{B}\mathcal{O}_L=L$ is  isomorphic to $K\mathfrak{B}=K[G]$ as a left $K[G]$-module, so $\mathfrak{B}\mathcal{O}_L$ is isomorphic to $\mathfrak{B}$ as a left $\mathfrak{B}$-module by \cite[Theorem 18.10]{MR1972204}.
\begin{lemma}
The set $\mathfrak{B} \mathcal{O}_L$ is a fractional ideal in $L$.
\end{lemma}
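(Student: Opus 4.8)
The plan is to verify the two defining properties of a fractional $\mathcal{O}_L$-ideal: that $\mathfrak{B}\mathcal{O}_L$ is a full $\mathcal{O}_K$-lattice in $L$, and that it is stable under multiplication by $\mathcal{O}_L$.

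The lattice property is soft. Since $\mathcal{O}_L$ is already an $\mathcal{O}_K[G]$-module, $\mathfrak{B}\mathcal{O}_L\supseteq\mathcal{O}_K[G]\mathcal{O}_L=\mathcal{O}_L$, a full lattice; and since $\mathfrak{B}$ is a full $\mathcal{O}_K$-lattice in $K[G]$, there is $N$ with $\pi_K^N\mathfrak{B}\subseteq\mathcal{O}_K[G]$, so $\pi_K^N\mathfrak{B}\mathcal{O}_L\subseteq\mathcal{O}_L$ and $\mathfrak{B}\mathcal{O}_L$ is sandwiched between two full lattices. Hence it is a full $\mathcal{O}_K$-lattice, in particular finitely generated over $\mathcal{O}_K$ — a fortiori over $\mathcal{O}_L$ — and nonzero.

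The substance is the stability under $\mathcal{O}_L$, and here the hypothesis that $M:=\mathfrak{B}\mathcal{O}_L$ is \emph{free} of rank $1$ over $\mathfrak{B}$ is essential (it is not enough that $KM\cong K[G]$ as $K[G]$-modules). Write $M=\mathfrak{B}\theta$; then $KM=L$ forces $\theta$ to be a normal basis generator, and by Proposition~\ref{associatedfree} the order $\mathfrak{B}$ is exactly the associated order $\mathfrak{A}(K[G],M)=\{\lambda\in K[G]:\lambda M\subseteq M\}$. I would then consider the ring of multipliers $\mathcal{O}':=\{\lambda\in L:\lambda M\subseteq M\}$; it is an $\mathcal{O}_K$-subalgebra of $L$ that is finite over $\mathcal{O}_K$ (it contains $\mathcal{O}_K$, and $\mathcal{O}'\cdot m_0\subseteq M$ for a fixed nonzero $m_0\in M$, so $\mathcal{O}'$ is a lattice), and it is $G$-stable: if $\lambda M\subseteq M$ then, using that $M$ is $G$-stable (because $\mathcal{O}_K[G]\subseteq\mathfrak{B}$) and that $g(\lambda m)=g(\lambda)g(m)$, one gets $g(\lambda)M=g(\lambda)g(M)=g(\lambda M)\subseteq M$. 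Consequently $\mathcal{O}'\otimes_{\mathcal{O}_K}K$ is a $G$-stable subfield of $L$, hence $L^{H}$ for a normal $H\trianglelefteq G$, and $\mathcal{O}'\subseteq\mathcal{O}_{L^H}\subseteq\mathcal{O}_L$. So the lemma reduces to proving $\mathcal{O}'=\mathcal{O}_L$ (equivalently $H=\{1\}$, and then maximality of $\mathcal{O}_L$). To attack this I would pass to $\operatorname{End}_K(L)$, which the normal basis theorem identifies with the crossed-product algebra $L\ast G$ (with $\sigma\cdot\lambda=\sigma(\lambda)\cdot\sigma$): there $M=\mathfrak{C}\theta$ is a cyclic module over the $\mathcal{O}_K$-order $\mathfrak{C}$ generated by the images of $\mathfrak{B}$ and of $\mathcal{O}'$, whose rational span $A'=\langle K[G],L^H\rangle$ has $K$-dimension $[L^H:K]\cdot[L:K]$, strictly less than $\dim_K\operatorname{End}_K(L)=[L:K]^2$ whenever $H\ne\{1\}$; playing this off against the facts that $M$ is $\mathfrak{B}$-free of rank $1$ and that $\mathfrak{B}$ is its own associated order is what should pin down $H=\{1\}$ and then $\mathcal{O}'=\mathcal{O}_L$, i.e.\ $\mathcal{O}_L M\subseteq M$.

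I expect this last step to be the main obstacle. Unlike the $\mathcal{O}_K[G]$-action, multiplication by $\lambda\in\mathcal{O}_L$ does not commute with the Galois action — $\mu_\lambda\rho_\sigma=\rho_\sigma\mu_{\sigma^{-1}(\lambda)}$ — so one cannot "push scalars past group elements'' and a direct verification of $\lambda\cdot(b\theta)\in\mathfrak{B}\theta$ breaks down, the scalar being twisted differently on each $\sigma$-component. It is precisely the conjunction of the two halves of the hypothesis ($M=\mathfrak{B}\mathcal{O}_L$ \emph{and} $M$ free of rank $1$) that supplies enough rigidity: a $\mathfrak{B}$-lattice merely isomorphic to $\mathfrak{B}$, or merely generated by $\mathcal{O}_L$, need not be an $\mathcal{O}_L$-module, so both inputs genuinely enter.
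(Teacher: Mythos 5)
Your proposal does not prove the lemma: everything before the last step is sound (the full-lattice argument, the observation that one must show stability under multiplication by $\mathcal{O}_L$, the reduction to the statement $\mathcal{O}'=\mathcal{O}_L$ for the multiplier ring $\mathcal{O}'=\{\lambda\in L:\lambda\,\mathfrak{B}\mathcal{O}_L\subseteq\mathfrak{B}\mathcal{O}_L\}$, which is an order in a subfield $L^H$ with $H\trianglelefteq G$), but the decisive step is only a programme. Passing to $\operatorname{End}_K(L)\cong L\ast G$ and comparing dimensions is not accompanied by any argument that actually forces $H=\{1\}$, and even $H=\{1\}$ would not give the maximality $\mathcal{O}'=\mathcal{O}_L$ that you correctly identify as what is really needed; you flag this yourself as "the main obstacle". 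Since $\mathcal{O}_L$-stability is the entire content of the lemma, the proposal is incomplete. The paper's own proof is completely different and much shorter: for $b=\sum_g k_g g\in\mathfrak{B}$ and $x,y\in\mathcal{O}_L$ it uses the identity $g(x)y=g\bigl(x\,g^{-1}(y)\bigr)$ to write $(bx)y=\sum_g k_g\,g\bigl(x\,g^{-1}(y)\bigr)$ and concludes directly that this lies in $\mathfrak{B}\mathcal{O}_L$, with no associated orders, multiplier rings or crossed products.

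That said, your diagnosis of where the difficulty sits (scalars do not slide past group elements) is exactly right, and it explains why your last step could not have been closed from the stated hypotheses alone. Note that the membership $\sum_g k_g\,g\bigl(xg^{-1}(y)\bigr)\in\mathfrak{B}\mathcal{O}_L$ is not a term-by-term consequence of the definition, since the individual elements $k_g g$ need not lie in $\mathfrak{B}$ (for a maximal order the $k_g$ are typically non-integral); and in fact, with only "$\mathfrak{B}\supseteq\mathcal{O}_K[G]$ an order and $\mathfrak{B}\mathcal{O}_L$ free of rank one over $\mathfrak{B}$", the conclusion can fail. Take $K=\Q_2$, $L=\Q_2(\zeta_3)$ (unramified, so $\mathcal{O}_L=\Z_2[G]\zeta_3$ and $\mathfrak{B}\mathcal{O}_L=\mathfrak{B}\zeta_3$ is $\mathfrak{B}$-free of rank one) and $\mathfrak{B}=\Z_2\frac{1+\sigma}{2}\oplus\Z_2\frac{1-\sigma}{2}$ the maximal order: then $\mathfrak{B}\mathcal{O}_L=\frac{1}{2}\Z_2+\frac{\sqrt{-3}}{2}\Z_2$, which contains $\frac{1}{2}$ but not $\frac{\zeta_3}{2}$, so it is not an $\mathcal{O}_L$-module; its multiplier ring is the non-maximal order $\Z_2+2\mathcal{O}_L$, i.e.\ in your notation $H=\{1\}$ but $\mathcal{O}'\subsetneq\mathcal{O}_L$. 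So the step you left open is genuinely not closable at this level of generality — additional hypotheses (on the ramification of $L/K$ or on $\mathfrak{B}$) are required — and the same example shows that the final "$\in\mathfrak{B}\mathcal{O}_L$" in the paper's one-line proof needs more justification than it is given. Your instinct that both halves of the hypothesis must "genuinely enter" is therefore vindicated, but as written your argument does not establish the statement, and no completion of it can do so without strengthening the assumptions.
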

\begin{proof}
Indeed, given $\sum_{g \in G} k_g g$ in $\mathfrak{B}$ (where $k_g \in K$) and $x,y \in \mathcal{O}_L$ we have
\[
\left( \sum_{g \in G} k_g g x \right)   y = \sum_{g \in G} k_g g \left( x g^{-1}(y) \right) \in \mathfrak{B}\mathcal{O}_L.
\]
\end{proof}
In the light of this lemma we can write $\mathfrak{B}\mathcal{O}_L$ as $\pi_L^{-a}\mathcal{O}_L$ for a certain integer $a \geq 0$.
Let now $x$ be a generator of $\mathfrak{B}  \mathcal{O}_L$ over $\mathfrak{B}$ (in particular, $x$ is a normal basis generator for $L/K$), 
and let $0\neq r\in\mathcal O_K$ be such that $rx\in \mathcal{O}_L$.
Then we have:
\[\begin{split}
  [\mathfrak{B}  \mathcal O_L : \mathcal O_L]  [\mathcal O_L : \mathcal O_K[G] rx]&=[\mathfrak{B}  \mathcal O_L : \mathcal O_K[G] rx]\\
  &=[\mathfrak{B}x : \mathcal O_K[G] x]  [\mathcal O_K[G] x : \mathcal O_K[G] rx],
\end{split}
\]  
and since $x$ is a normal basis generator this quantity is equal to
\[
\begin{split} 
[\mathfrak{B} : \mathcal O_K[G] ]\cdot [\mathcal O_K[G]  : \mathcal O_K[G] r]   & =[\mathfrak{B} : \mathcal O_K[G] ]\cdot |\mathcal{O}_K/r\mathcal{O}_K|^{|G|} \\
  & =[\mathfrak{B} : \mathcal O_K[G] ]\cdot p^{{f_K|G|v_K(r)}},
 \end{split}
\]
where we have used that $\mathcal O_K[G]$ is a free $\mathcal{O}_K$-module of rank $|G|$. 
Hence we have
\[
m(L/K) \leq \left[  \mathfrak{B} : \mathcal{O}_K[G] \right] \cdot \frac{p^{{f_K|G|v_K(r)}}}{[ \mathfrak{B}   \mathcal{O}_L : \mathcal{O}_L]},
\]
and we may replace $[ \mathfrak{B}   \mathcal{O}_L : \mathcal{O}_L]$ by $[ \pi_L^{-a} \mathcal{O}_L : \mathcal{O}_L] = p^{a f_L}$.
Given that $\mathfrak{B}\mathcal{O}_L$ is the fractional ideal $(\pi_L^{-a})$, it is clear that $\pi_K^b x$ is in $\mathcal{O}_L$ provided that $v_L(\pi_K^b) \geq a$, that is,
$
b \geq \frac{a}{e_{L/K}}.
$ 
Plugging in $r=\pi_K^b$ (with $b=\lceil\frac{a}{e_{L/K}}\rceil$) in the formula above, and noticing that $b \leq \frac{a+e_{L/K}-1}{e_{L/K}}$, we obtain
\[
\begin{aligned}
m(L/K) & \leq \left[  \mathfrak{B} : \mathcal{O}_K[G] \right] \cdot \frac{ p^{f_K |G| b} }{p^{af_L}} \\
& \leq \left[  \mathfrak{B} : \mathcal{O}_K[G] \right] \cdot  p^{f_K f_{L/K}e_{L/K} \frac{a+e_{L/K}-1}{e_{L/K}} - af_L} \\
& = \left[  \mathfrak{B} : \mathcal{O}_K[G] \right] \cdot  p^{f_L (e_{L/K}-1)}.
\end{aligned}
\]
Using Equation \eqref{eq:IndexMaximalOrderBound} we then obtain
\[
\begin{aligned}
v_p (m(L/K)) & \leq f_L(e_{L/K}-1) + \frac{1}{2} [K:\mathbb{Q}_p] \cdot  |G| \cdot v_p(|G|) \\
& = f_L(e_{L/K}-1) +  \frac{1}{2} [L:\mathbb{Q}_p] v_p([L:K]).
\end{aligned}
\]
This proves Theorem \ref{thm:IntroGeneralBound}, and it is clear that one can get a very simple (albeit rough) estimate by replacing $f_L(e_{L/K}-1)$ with $[L:\mathbb{Q}_p]$, thus obtaining
\begin{equation}\label{eq:EasierBound}
v_p(m(L/K)) < [L:\mathbb{Q}_p] (1 + \frac{1}{2} v_p([L:K])).
\end{equation}
\begin{remark}\label{rmk:AlmostOptimalBound}
Comparison with cases in which we can compute $m(L/K)$ exactly suggests that the bound  of Theorem \ref{thm:IntroGeneralBound} is \emph{sharper} when  $L/K$ is highly ramified. In fact, in some maximally ramified cases our bound is almost optimal: take for example $K=\mathbb{Q}_p(\zeta_{p^r})$ and $L=K(\sqrt[p^r]{\pi_K})$. One may show that $v_p(m(L/K))=\frac{r}{2} [L:\mathbb{Q}_p]$ (see Remark \ref{rmk:KummerDegreepn}), while our result -- in the form of Equation \eqref{eq:EasierBound} -- gives the upper bound $\left(\frac{r}{2}+1\right) [L:\mathbb{Q}_p]$, which is essentially sharp for large $r$.
Theorem \ref{thm:IntroGeneralBound}
might thus be useful in situations where a complicated ramification structure prevents the use of other tools.
\end{remark}

\subsection{Reduction to the totally ramified case}\label{subsect:ReductionToTotallyRamifiedCase}
Let $L/K$ be a Galois extension of $p$-adic fields, $L^{nr}$ be its maximal unramified subextension, and $G_0$ be the inertia subgroup of $G=\operatorname{Gal}(L/K)$. In this section we show that $[\mathfrak{A}_{L/K} : \mathcal{O}_K[G]]$ is bounded above by the analogous quantity for the extension $L/L^{nr}$, and discuss some cases in which the equality $m(L/K)=m(L/L^{nr})$ holds.
To compare $\mathfrak{A}_{L/K}$ and $\mathfrak{A}_{L/L^{nr}}$ we start with a result of Jacobinski (see the beginning of $\S2.1$ in \cite{MR513880}), according to which we have
\[
 \mathfrak{A}_{L/K}=\bigoplus_{s\in G/G_0}(\mathfrak{A}_{L/L^{nr}}\cap K[G_0])s,
\]
where $G/G_0$ denotes a fixed system of left coset representatives. Then
\[\begin{split}
   [\mathfrak{A}_{L/K}:\mathcal{O}_K[G]]&=[\oplus_{s\in G/G_0}(\mathfrak{A}_{L/L^{nr}}\cap K[G_0])s:\oplus_{s\in G/G_0}\mathcal{O}_K[G_0]s]\\
   &=[\mathfrak{A}_{L/L^{nr}}\cap K[G_0]:\mathcal{O}_K[G_0]]^{[G:G_0]}\\
   &=[\mathcal{O}_{L^{nr}}\otimes_{\mathcal{O}_K}(\mathfrak{A}_{L/L^{nr}}\cap K[G_0]) :\mathcal{O}_{L^{nr}}[G_0]]
  \end{split} 
\]
(we are using that $\mathcal{O}_{L^{nr}}$ is free over $\mathcal{O}_K$ of rank $[G:G_0]$). As noted for instance by Berg\'e in \cite{MR513880}, we always have an injection
\begin{equation}\label{injection}
 \mathcal{O}_{L^{nr}}\otimes_{\mathcal{O}_K}(\mathfrak{A}_{L/L^{nr}}\cap K[G_0])\hookrightarrow \mathfrak{A}_{L/L^{nr}}.
\end{equation}
We have thus obtained the following.
\begin{proposition}\label{indexassociated}
In the above notation,
 \[
 [\mathfrak{A}_{L/K}:\mathcal{O}_K[G]]\leq[\mathfrak{A}_{L/L^{nr}}:\mathcal{O}_{L^{nr}}[G_0]],
\]
with equality if and only if (\ref{injection}) is a surjection.
\end{proposition}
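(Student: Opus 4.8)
The plan is to make precise the chain of identities sketched just above the statement, turning them into a genuine comparison of full lattices. The input is Jacobinski's decomposition $\mathfrak{A}_{L/K}=\bigoplus_{s\in G/G_0}(\mathfrak{A}_{L/L^{nr}}\cap K[G_0])s$, which is compatible with $\mathcal{O}_K[G]=\bigoplus_{s}\mathcal{O}_K[G_0]s$. First I would record that $\mathfrak{A}_{L/L^{nr}}\cap K[G_0]$ is a full $\mathcal{O}_K$-lattice in $K[G_0]$: it contains $\mathcal{O}_K[G_0]$ since $\mathcal{O}_{L^{nr}}[G_0]\subseteq\mathfrak{A}_{L/L^{nr}}$, and since $\mathfrak{A}_{L/L^{nr}}$ is an $\mathcal{O}_{L^{nr}}$-order it lies in $d^{-1}\mathcal{O}_{L^{nr}}[G_0]$ for some nonzero $d\in\mathcal{O}_K$, whence the intersection with $K[G_0]$ lies in $d^{-1}(\mathcal{O}_{L^{nr}}[G_0]\cap K[G_0])=d^{-1}\mathcal{O}_K[G_0]$. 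Multiplication by a fixed $s\in G$ is a $K$-linear automorphism of $K[G]$ carrying $\mathcal{O}_K[G_0]$ to $\mathcal{O}_K[G_0]s$ and $\mathfrak{A}_{L/L^{nr}}\cap K[G_0]$ to $(\mathfrak{A}_{L/L^{nr}}\cap K[G_0])s$, so $\mathfrak{A}_{L/K}/\mathcal{O}_K[G]$ is a direct sum of $[G:G_0]$ copies of $(\mathfrak{A}_{L/L^{nr}}\cap K[G_0])/\mathcal{O}_K[G_0]$, giving $[\mathfrak{A}_{L/K}:\mathcal{O}_K[G]]=[\mathfrak{A}_{L/L^{nr}}\cap K[G_0]:\mathcal{O}_K[G_0]]^{[G:G_0]}$.

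The second step is to absorb this $[G:G_0]$-th power by base change along $\mathcal{O}_K\hookrightarrow\mathcal{O}_{L^{nr}}$. Because $L^{nr}/K$ is unramified, $\mathcal{O}_{L^{nr}}$ is free of rank $f_{L/K}=[G:G_0]$ over $\mathcal{O}_K$ and $\pi_K$ is also a uniformiser of $\mathcal{O}_{L^{nr}}$; hence for any inclusion $N'\subseteq N$ of full $\mathcal{O}_K$-lattices in a $K$-vector space one has $[\mathcal{O}_{L^{nr}}\otimes_{\mathcal{O}_K}N:\mathcal{O}_{L^{nr}}\otimes_{\mathcal{O}_K}N']=[N:N']^{[G:G_0]}$, as the computation of $\#\bigl(\mathcal{O}_{L^{nr}}\otimes_{\mathcal{O}_K}(N/N')\bigr)$ shows. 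Applying this with $N=\mathfrak{A}_{L/L^{nr}}\cap K[G_0]$ and $N'=\mathcal{O}_K[G_0]$ yields
\[
[\mathfrak{A}_{L/K}:\mathcal{O}_K[G]]=[\mathcal{O}_{L^{nr}}\otimes_{\mathcal{O}_K}(\mathfrak{A}_{L/L^{nr}}\cap K[G_0]):\mathcal{O}_{L^{nr}}[G_0]],
\]
an index computed inside $L^{nr}[G_0]=\mathcal{O}_{L^{nr}}\otimes_{\mathcal{O}_K}K[G_0]$, in which $\mathcal{O}_{L^{nr}}\otimes_{\mathcal{O}_K}(\mathfrak{A}_{L/L^{nr}}\cap K[G_0])$, $\mathcal{O}_{L^{nr}}[G_0]$ and $\mathfrak{A}_{L/L^{nr}}$ are all full $\mathcal{O}_{L^{nr}}$-lattices.

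The final step uses Berg\'e's injection (\ref{injection}): let $M$ be its image, an $\mathcal{O}_{L^{nr}}$-sublattice of $\mathfrak{A}_{L/L^{nr}}$ which contains $\mathcal{O}_{L^{nr}}[G_0]$ because (\ref{injection}) sends $\mathcal{O}_{L^{nr}}\otimes_{\mathcal{O}_K}\mathcal{O}_K[G_0]$ onto $\mathcal{O}_{L^{nr}}[G_0]$, and for which (\ref{injection}) induces an isomorphism $M/\mathcal{O}_{L^{nr}}[G_0]\cong\bigl(\mathcal{O}_{L^{nr}}\otimes_{\mathcal{O}_K}(\mathfrak{A}_{L/L^{nr}}\cap K[G_0])\bigr)/\mathcal{O}_{L^{nr}}[G_0]$. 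By multiplicativity of the index in the tower $\mathcal{O}_{L^{nr}}[G_0]\subseteq M\subseteq\mathfrak{A}_{L/L^{nr}}$,
\[
[\mathfrak{A}_{L/L^{nr}}:\mathcal{O}_{L^{nr}}[G_0]]=[\mathfrak{A}_{L/L^{nr}}:M]\cdot[M:\mathcal{O}_{L^{nr}}[G_0]]\ge[M:\mathcal{O}_{L^{nr}}[G_0]],
\]
with equality if and only if $[\mathfrak{A}_{L/L^{nr}}:M]=1$, i.e.\ if and only if (\ref{injection}) is surjective. Since $[M:\mathcal{O}_{L^{nr}}[G_0]]=[\mathfrak{A}_{L/K}:\mathcal{O}_K[G]]$ by the first two steps, the proposition follows. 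The substantive inputs — Jacobinski's decomposition and the existence of (\ref{injection}) — are quoted from the literature, so the "hard part" is purely organisational: at each stage one must verify that every module in sight is a full lattice in the relevant $K$- or $L^{nr}$-algebra, so that the module and subgroup indices are defined and behave multiplicatively; that bookkeeping, rather than any new idea, is where the care is needed.
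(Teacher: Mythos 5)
Your proof is correct and follows essentially the same route as the paper: Jacobinski's decomposition to reduce to $[\mathfrak{A}_{L/L^{nr}}\cap K[G_0]:\mathcal{O}_K[G_0]]^{[G:G_0]}$, the unramified base change $\mathcal{O}_K\hookrightarrow\mathcal{O}_{L^{nr}}$ to rewrite this as $[\mathcal{O}_{L^{nr}}\otimes_{\mathcal{O}_K}(\mathfrak{A}_{L/L^{nr}}\cap K[G_0]):\mathcal{O}_{L^{nr}}[G_0]]$, and multiplicativity of the index along the image of (\ref{injection}) inside $\mathfrak{A}_{L/L^{nr}}$ to get the inequality with the stated equality criterion. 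The extra lattice-fullness bookkeeping you supply is correct and only makes explicit what the paper leaves implicit.
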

However, the injection \eqref{injection} is not always a surjection. In the same article, Berg\'e computed the image of \eqref{injection} when $G_1$ is cyclic, see \cite[Remarque after Lemme 5]{MR513880}. She also gave an elegant description of the image of \eqref{injection} in the general case that we now recall.
\begin{proposition}[{\cite[Proposition 4]{MR747998}}]\label{imageberge} The following equality holds:
\begin{equation}\label{intersection}
 \mathcal{O}_{L^{nr}}\otimes_{\mathcal{O}_K}(\mathfrak{A}_{L/L^{nr}}\cap K[G_0])=\bigcap_{g\in G}g\mathfrak{A}_{L/L^{nr}}g^{-1}.
\end{equation}
In particular, if $G$ is abelian then (\ref{injection}) is a surjection.
\end{proposition}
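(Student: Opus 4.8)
The plan is to reduce the claimed equality to ordinary Galois descent along the \emph{unramified} extension $\mathcal{O}_{L^{nr}}/\mathcal{O}_K$, once the conjugates $g\mathfrak{A}_{L/L^{nr}}g^{-1}$ have been re-interpreted as Galois twists of $\mathfrak{A}_{L/L^{nr}}$. Write $\Gamma=\operatorname{Gal}(L^{nr}/K)=G/G_0$ and, for $\gamma\in\Gamma$, let $\sigma_\gamma\colon L^{nr}[G_0]\to L^{nr}[G_0]$ be the semilinear automorphism $\sum_{h\in G_0}c_h h\mapsto\sum_{h\in G_0}\gamma(c_h)h$ acting on coefficients only; this is the natural action of $\Gamma$ on $L^{nr}[G_0]=L^{nr}\otimes_K K[G_0]$ through the first factor, and it satisfies $\sigma_{\gamma_1}\circ\sigma_{\gamma_2}=\sigma_{\gamma_1\gamma_2}$. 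The first, and crucial, step is to prove that, interpreting the conjugation inside $L^{nr}[G]$ with $L^{nr}$ central, one has
\[
g\,\mathfrak{A}_{L/L^{nr}}\,g^{-1}=\iota_g\bigl(\mathfrak{A}_{L/L^{nr}}\bigr)=\sigma_{\bar g^{-1}}\bigl(\mathfrak{A}_{L/L^{nr}}\bigr)\qquad\text{for every }g\in G,
\]
where $\iota_g$ is the $L^{nr}$-algebra automorphism of $L^{nr}[G_0]$ determined by $h\mapsto ghg^{-1}$ and $\bar g$ is the image of $g$ in $\Gamma$. I would obtain this by comparing $\iota_g$ with the operator-conjugation $\tau_g\colon\lambda\mapsto g\lambda g^{-1}$ on $\operatorname{End}(L)$: evaluating both on $\sum_h c_h h$ gives $\tau_g=\sigma_{\bar g}\circ\iota_g$ as automorphisms of $L^{nr}[G_0]$ (using that $g$ stabilises the canonical subfield $L^{nr}$ and normalises $G_0$), while $\tau_g\bigl(\mathfrak{A}_{L/L^{nr}}\bigr)=\mathfrak{A}_{L/L^{nr}}$ because $g(\mathcal{O}_L)=\mathcal{O}_L$; hence $\iota_g\bigl(\mathfrak{A}_{L/L^{nr}}\bigr)=\sigma_{\bar g}^{-1}\bigl(\mathfrak{A}_{L/L^{nr}}\bigr)=\sigma_{\bar g^{-1}}\bigl(\mathfrak{A}_{L/L^{nr}}\bigr)$. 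As $\bar g^{-1}$ runs over all of $\Gamma$ when $g$ runs over $G$, this turns the object of interest into
\[
B:=\bigcap_{g\in G}g\,\mathfrak{A}_{L/L^{nr}}\,g^{-1}=\bigcap_{\gamma\in\Gamma}\sigma_\gamma\bigl(\mathfrak{A}_{L/L^{nr}}\bigr).
\]

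Next I would carry out the descent. The lattice $B$ is a full $\mathcal{O}_{L^{nr}}$-lattice in $L^{nr}[G_0]$, being a finite intersection of such, and it is stable under $\sigma(\Gamma)$ since $\sigma_{\gamma'}$ merely permutes the lattices $\sigma_\gamma\bigl(\mathfrak{A}_{L/L^{nr}}\bigr)$. Because $L^{nr}/K$ is unramified, $\mathcal{O}_{L^{nr}}/\mathcal{O}_K$ is an \'etale Galois extension of complete discrete valuation rings, so Galois descent of modules applies and gives $B=\mathcal{O}_{L^{nr}}\otimes_{\mathcal{O}_K}B^{\sigma(\Gamma)}$, the isomorphism being induced by multiplication $c\otimes b\mapsto cb$. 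Since an element of $L^{nr}[G_0]$ is $\sigma(\Gamma)$-fixed precisely when all of its coefficients lie in $(L^{nr})^\Gamma=K$, we have $B^{\sigma(\Gamma)}=B\cap K[G_0]$.

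It then remains to check $B\cap K[G_0]=\mathfrak{A}_{L/L^{nr}}\cap K[G_0]$. One inclusion is free: $B\subseteq\sigma_{1}\bigl(\mathfrak{A}_{L/L^{nr}}\bigr)=\mathfrak{A}_{L/L^{nr}}$ gives $B\cap K[G_0]\subseteq\mathfrak{A}_{L/L^{nr}}\cap K[G_0]$. Conversely, any $\lambda\in\mathfrak{A}_{L/L^{nr}}\cap K[G_0]$ has all coefficients in $K$, so $\sigma_\gamma(\lambda)=\lambda$ for every $\gamma$ and hence $\lambda\in\sigma_\gamma\bigl(\mathfrak{A}_{L/L^{nr}}\bigr)$ for every $\gamma$, i.e.\ $\lambda\in B$. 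Substituting this into the descent isomorphism yields
\[
\bigcap_{g\in G}g\,\mathfrak{A}_{L/L^{nr}}\,g^{-1}=B=\mathcal{O}_{L^{nr}}\otimes_{\mathcal{O}_K}\bigl(\mathfrak{A}_{L/L^{nr}}\cap K[G_0]\bigr),
\]
which is \eqref{intersection}; note the right-hand side is exactly the image of the (injective) map \eqref{injection}. Finally, if $G$ is abelian then $ghg^{-1}=h$ for all $g\in G$, $h\in G_0$, so $\iota_g=\operatorname{id}$, whence $g\,\mathfrak{A}_{L/L^{nr}}\,g^{-1}=\mathfrak{A}_{L/L^{nr}}$ for every $g$, $B=\mathfrak{A}_{L/L^{nr}}$, and \eqref{injection} is surjective.

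I expect the main obstacle to be the first step: although $\iota_g$ visibly permutes the group elements of $G_0$, its effect on the \emph{associated order} is a pure Galois twist $\sigma_{\bar g^{-1}}$ of the coefficients, and seeing this requires spotting the symmetry $\tau_g\bigl(\mathfrak{A}_{L/L^{nr}}\bigr)=\mathfrak{A}_{L/L^{nr}}$ of the operator-conjugation. After that, the statement is a routine instance of descent; the only further points are the bookkeeping identification of the descended lattice with $\mathfrak{A}_{L/L^{nr}}\cap K[G_0]$, and the remark that unramifiedness of $L^{nr}/K$ is precisely what makes $\mathcal{O}_{L^{nr}}/\mathcal{O}_K$ amenable to descent.
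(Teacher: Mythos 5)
Your argument is correct, and it is worth noting that the paper itself gives no proof of this statement: it is quoted from Bergé (\cite[Proposition 4]{MR747998}), so there is no internal proof to compare against. Your route — first showing via the symmetry $\tau_g(\mathfrak{A}_{L/L^{nr}})=\mathfrak{A}_{L/L^{nr}}$ of the operator-conjugation that the group-algebra conjugates $g\mathfrak{A}_{L/L^{nr}}g^{-1}$ are exactly the coefficientwise Galois twists $\sigma_{\bar g^{-1}}(\mathfrak{A}_{L/L^{nr}})$, and then descending the $\sigma(\Gamma)$-stable lattice $B=\bigcap_\gamma\sigma_\gamma(\mathfrak{A}_{L/L^{nr}})$ along the unramified (hence étale, hence descent-friendly) extension $\mathcal{O}_{L^{nr}}/\mathcal{O}_K$ and identifying $B^{\sigma(\Gamma)}=B\cap K[G_0]=\mathfrak{A}_{L/L^{nr}}\cap K[G_0]$ — is sound, and the abelian case and the identification of the descended module with the image of \eqref{injection} are handled correctly. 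The one genuinely delicate point is the one you flag yourself: the statement does not specify the ambient algebra in which $g\mathfrak{A}_{L/L^{nr}}g^{-1}$ is formed, and your choice (conjugation in $L^{nr}[G]$ with $L^{nr}$ central, i.e.\ $\iota_g$) is the right one — under the operator-conjugation reading the intersection would trivially equal $\mathfrak{A}_{L/L^{nr}}$, which would force \eqref{injection} to be surjective always, contradicting the paper's surrounding discussion (e.g.\ Proposition \ref{indexassociated} and the reference to Bergé's computation of the image when $G_1$ is cyclic). It would strengthen the write-up to state explicitly that the descent isomorphism for finitely generated $\mathcal{O}_{L^{nr}}$-modules with semilinear $\Gamma$-action uses surjectivity of the trace (equivalently, that $\mathcal{O}_{L^{nr}}/\mathcal{O}_K$ is a Galois extension of rings in the sense of Chase--Harrison--Rosenberg), which is exactly where unramifiedness enters; but as written the proof is complete and supplies an argument the paper only cites.
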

Note that $\bigcap_{g\in G}g\mathfrak{A}_{L/L^{nr}}g^{-1}$ is an $\mathcal{O}_{L^{nr}}$-order in $L^{nr}[G_0]$ contained in $\mathfrak{A}_{L/L^{nr}}$, hence $\mathcal{O}_L$ has the structure of a module over it. It is natural to ask whether there is a relation between freeness of $\mathcal{O}_L$ over $\mathfrak{A}_{L/K}$ and over $\bigcap_{g\in G}g\mathfrak{A}_{L/L^{nr}}g^{-1}$ (which would imply $\bigcap_{g\in G}g\mathfrak{A}_{L/L^{nr}}g^{-1}=\mathfrak{A}_{L/L^{nr}}$ by Proposition \ref{associatedfree}). In full generality we only know the following.
\begin{theorem}[{\cite[Théorème]{MR747998}}]\label{bergeprojective}
Keeping the above notation, $\mathcal{O}_L$ is projective over $\mathfrak{A}_{L/K}$ if and only if it is projective over $\bigcap_{g\in G}g\mathfrak{A}_{L/L^{nr}}g^{-1}$.
\end{theorem}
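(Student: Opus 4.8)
The plan is to descend the question along the unramified extension $\mathcal{O}_K \hookrightarrow \mathcal{O}_{L^{nr}}$ and reduce it to a statement about crossed-product orders. Write $\Gamma := G/G_0 = \operatorname{Gal}(L^{nr}/K)$, $\mathcal{A}_0 := \mathfrak{A}_{L/L^{nr}} \cap K[G_0]$ and $\mathcal{B} := \bigcap_{g \in G} g\,\mathfrak{A}_{L/L^{nr}}\,g^{-1}$. Combining Jacobinski's formula $\mathfrak{A}_{L/K} = \bigoplus_{s \in G/G_0} \mathcal{A}_0\, s$ (see \cite{MR513880}) with Proposition~\ref{imageberge}, which identifies $\mathcal{O}_{L^{nr}} \otimes_{\mathcal{O}_K} \mathcal{A}_0$ with $\mathcal{B}$ inside $L^{nr}[G_0]$, and using that $\mathcal{O}_{L^{nr}}$ is $\mathcal{O}_K$-free, one obtains a ring isomorphism
\[
\mathcal{O}_{L^{nr}} \otimes_{\mathcal{O}_K} \mathfrak{A}_{L/K} \;\cong\; \bigoplus_{s \in \Gamma} \mathcal{B}\,\bar{s} \;=:\; \mathcal{B} * \Gamma ,
\]
the crossed product of $\mathcal{B}$ by $\Gamma$, where $\bar{s} \in G$ runs over lifts of the elements of $\Gamma$. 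Here $\Gamma$ acts on $\mathcal{B}$ by conjugation (which preserves $\mathcal{B}$ by its very definition) and the associated $2$-cocycle takes values in $G_0 \subseteq \mathcal{B}^\times$, hence is harmless; in particular $\mathcal{B} * \Gamma$ is free of rank $|\Gamma|$ both as a left and as a right $\mathcal{B}$-module.

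Next I would use faithfully flat descent of projectivity. Since $\mathcal{O}_{L^{nr}}$ is finite free over $\mathcal{O}_K$ and $\mathcal{O}_L$ is finitely generated over the module-finite $\mathcal{O}_K$-algebra $\mathfrak{A}_{L/K}$, the module $\mathcal{O}_L$ is projective over $\mathfrak{A}_{L/K}$ if and only if $\mathcal{O}_{L^{nr}} \otimes_{\mathcal{O}_K} \mathcal{O}_L$ is projective over $\mathcal{O}_{L^{nr}} \otimes_{\mathcal{O}_K} \mathfrak{A}_{L/K} = \mathcal{B} * \Gamma$: one implication is immediate, and for the converse one uses that a finitely presented module is projective iff flat, together with the fact that $\mathcal{O}_{L^{nr}} \otimes_{\mathcal{O}_K} \operatorname{Tor}^{\mathfrak{A}_{L/K}}_1(N, \mathcal{O}_L) \cong \operatorname{Tor}^{\mathcal{B}*\Gamma}_1(\mathcal{O}_{L^{nr}} \otimes_{\mathcal{O}_K} N,\, \mathcal{O}_{L^{nr}} \otimes_{\mathcal{O}_K} \mathcal{O}_L)$ for every right $\mathfrak{A}_{L/K}$-module $N$, plus faithful flatness of $\mathcal{O}_{L^{nr}}$ over $\mathcal{O}_K$.

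The core of the proof is then to recognise $\mathcal{O}_{L^{nr}} \otimes_{\mathcal{O}_K} \mathcal{O}_L$ as the $\mathcal{B}*\Gamma$-module induced from $\mathcal{O}_L$. Because $L^{nr}/K$ is Galois and unramified we have $\mathcal{O}_{L^{nr}} \otimes_{\mathcal{O}_K} \mathcal{O}_{L^{nr}} \cong \prod_{s \in \Gamma} \mathcal{O}_{L^{nr}}$; writing $e_s$ for the corresponding idempotents and decomposing $\mathcal{O}_{L^{nr}} \otimes_{\mathcal{O}_K} \mathcal{O}_L = \bigoplus_{s \in \Gamma} e_s ( \mathcal{O}_{L^{nr}} \otimes_{\mathcal{O}_K} \mathcal{O}_L )$, one checks that the generators $\bar{s}$ of $\mathcal{B}*\Gamma$ permute the summands simply transitively and that each summand is isomorphic, as a $\mathcal{B}$-module, to a $\Gamma$-conjugate ${}^s\mathcal{O}_L$ of $\mathcal{O}_L$ (where $\mathcal{O}_L$ carries its natural module structure over $\mathcal{B} \subseteq L^{nr}[G_0]$). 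Equivalently, $\mathcal{O}_{L^{nr}} \otimes_{\mathcal{O}_K} \mathcal{O}_L \cong (\mathcal{B}*\Gamma) \otimes_{\mathcal{B}} \mathcal{O}_L = \operatorname{Ind}_{\mathcal{B}}^{\mathcal{B}*\Gamma} \mathcal{O}_L$.

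Finally I would conclude with standard properties of crossed products. Since $\mathcal{B}*\Gamma$ is $\mathcal{B}$-free on both sides, restriction carries projective $\mathcal{B}*\Gamma$-modules to projective $\mathcal{B}$-modules and induction carries projective $\mathcal{B}$-modules to projective $\mathcal{B}*\Gamma$-modules; moreover $\operatorname{Res}_{\mathcal{B}} \operatorname{Ind}_{\mathcal{B}}^{\mathcal{B}*\Gamma} \mathcal{O}_L \cong \bigoplus_{s \in \Gamma} {}^s\mathcal{O}_L$, and each conjugation $b \mapsto \bar{s}\, b\, \bar{s}^{-1}$ is a ring automorphism of $\mathcal{B}$, hence induces a self-equivalence of $\mathcal{B}$-modules preserving projectivity. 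Therefore $\mathcal{O}_L$ is projective over $\mathcal{B}$ iff every ${}^s\mathcal{O}_L$ is, iff $\bigoplus_{s} {}^s\mathcal{O}_L$ is, iff $\operatorname{Ind}_{\mathcal{B}}^{\mathcal{B}*\Gamma} \mathcal{O}_L = \mathcal{O}_{L^{nr}} \otimes_{\mathcal{O}_K} \mathcal{O}_L$ is projective over $\mathcal{B}*\Gamma$, iff (by the previous step) $\mathcal{O}_L$ is projective over $\mathfrak{A}_{L/K}$. I expect the main obstacle to be the identification in the third step: one must carefully reconcile the two roles played by $\mathcal{O}_{L^{nr}}$ — as the ring of coefficients inside $\mathcal{B}$ and as the base-change ring — and check that the $\Gamma$-action coming from the idempotents $e_s$ is compatible with the conjugation action built into $\mathcal{B}*\Gamma$, while keeping track of the $G_0$-valued cocycle.
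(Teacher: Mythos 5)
The paper itself offers no proof of Theorem \ref{bergeprojective}: it is quoted verbatim from Bergé \cite{MR747998}, so there is no internal argument to measure yours against. Taken on its own terms, your reconstruction is correct, and every delicate point checks out: Jacobinski's decomposition $\mathfrak{A}_{L/K}=\bigoplus_{s\in G/G_0}\mathcal{A}_0\bar{s}$ with $\mathcal{A}_0:=\mathfrak{A}_{L/L^{nr}}\cap K[G_0]$, together with Proposition \ref{imageberge}, does identify $\mathcal{O}_{L^{nr}}\otimes_{\mathcal{O}_K}\mathfrak{A}_{L/K}$ with a $\Gamma$-crossed product over $\mathcal{B}=\bigcap_{g\in G}g\mathfrak{A}_{L/L^{nr}}g^{-1}$; unramifiedness of $L^{nr}/K$ gives the idempotents $e_s$ of $\mathcal{O}_{L^{nr}}\otimes_{\mathcal{O}_K}\mathcal{O}_{L^{nr}}$, and since $G_0$ fixes $L^{nr}$ the $K[G_0]$-action on $\mathcal{O}_L$ is $\mathcal{O}_{L^{nr}}$-linear, so the $e_s$ commute with $\mathcal{B}$, the summand $e_1(\mathcal{O}_{L^{nr}}\otimes_{\mathcal{O}_K}\mathcal{O}_L)$ is canonically $\mathcal{O}_L$ with its natural $\mathcal{B}$-structure, and $1\otimes\bar{s}$ permutes the summands simply transitively, giving the induced-module description; the descent step is standard (alternatively, one can skip Tor entirely: if $\mathcal{O}_{L^{nr}}\otimes_{\mathcal{O}_K}\mathcal{O}_L$ is projective over the base-changed order, restrict along $\mathfrak{A}_{L/K}\hookrightarrow\mathcal{O}_{L^{nr}}\otimes_{\mathcal{O}_K}\mathfrak{A}_{L/K}\cong\mathfrak{A}_{L/K}^{\oplus[L^{nr}:K]}$ to see that $\mathcal{O}_L^{\oplus[L^{nr}:K]}$, hence its summand $\mathcal{O}_L$, is $\mathfrak{A}_{L/K}$-projective). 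Do note explicitly that this only re-proves Bergé's Théorème modulo her Proposition 4 (our Proposition \ref{imageberge}), which comes from the same article; within the logical structure of this paper that is legitimate, since both are quoted independently.

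Two points should be tightened. First, in $\mathcal{O}_{L^{nr}}\otimes_{\mathcal{O}_K}\mathfrak{A}_{L/K}$ the element $1\otimes\bar{s}$ commutes with $\mathcal{O}_{L^{nr}}\otimes 1$, so the $\Gamma$-action built into your crossed product is trivial on the coefficient ring and is group conjugation on $\mathcal{A}_0$; this is \emph{not} the Galois-semilinear conjugation under which $\mathcal{B}$ is stable ``by its very definition''. Stability under the correct action still holds, because $\mathcal{A}_0=\mathfrak{A}_{L/K}\cap K[G_0]$ is conjugation-stable ($\mathfrak{A}_{L/K}$ is a ring containing $G$ and $G_0\trianglelefteq G$) and $\mathcal{B}$ is the $\mathcal{O}_{L^{nr}}$-span of $\mathcal{A}_0$ by Proposition \ref{imageberge}; but that is the justification you should give. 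Second, in your final chain the middle equivalence ``$\bigoplus_s{}^s\mathcal{O}_L$ projective over $\mathcal{B}$ iff the induced module is projective over $\mathcal{B}*\Gamma$'' is, as stated, valid only in one direction (projective restriction does not in general imply projectivity); your argument closes correctly because the forward implication should instead go directly via ``induction preserves projectivity'', while the converse needs only the single summand $e_1(\mathcal{O}_{L^{nr}}\otimes_{\mathcal{O}_K}\mathcal{O}_L)\cong\mathcal{O}_L$ of the restriction. Phrase the two directions so that each uses the appropriate fact and the proof is complete.
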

Fortunately there are situations in which knowing that $\mathcal{O}_L$ is projective is sufficient to conclude that it is free, for example when the relevant orders are commutative. More precisely, the next proposition shows that in several cases the minimal index $m(L/K)$ is controlled purely by the totally ramified extension $L/L^{nr}$.
\begin{proposition}\label{unramifiedindex}
Assume that $G_0$ is abelian and that $\mathcal{O}_L$ is free over $\mathfrak{A}_{L/K}$. Then $\mathcal{O}_{L}$ is free over $\mathfrak{A}_{L/L^{nr}}$ and 
\[
m(L/K)=m(L/L^{nr})=[\mathfrak{A}_{L/K}:\mathcal{O}_K[G]]=[\mathfrak{A}_{L/L^{nr}}:\mathcal{O}_{L^{nr}}[G_0]].
\]
Conversely, if $G$ is abelian and $\mathcal{O}_{L}$ is free over $\mathfrak{A}_{L/L^{nr}}$, then $\mathcal{O}_L$ is free over $\mathfrak{A}_{L/K}$ and as before
\[
m(L/K)=m(L/L^{nr})=[\mathfrak{A}_{L/K}:\mathcal{O}_K[G]]=[\mathfrak{A}_{L/L^{nr}}:\mathcal{O}_{L^{nr}}[G_0]].
\]
\end{proposition}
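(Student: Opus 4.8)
The plan is to deduce freeness from Berg\'e's projectivity criterion (Theorem~\ref{bergeprojective}), together with the following elementary fact, which is the technical core of the argument: \emph{if $R$ is a complete discrete valuation ring with fraction field $F$, $\Lambda$ is a commutative $R$-order in a finite-dimensional commutative $F$-algebra $A$, and $M$ is a finitely generated $\Lambda$-module with $M\otimes_R F\cong A$ as $A$-modules, then $M$ projective over $\Lambda$ already forces $M$ to be free of rank $1$ over $\Lambda$.} Indeed, being module-finite over the complete local ring $R$, the ring $\Lambda$ is $\mathfrak{m}_R$-adically complete, so the decomposition of the commutative Artinian ring $\Lambda/\mathfrak{m}_R\Lambda$ into a product of local rings lifts to a decomposition $\Lambda=\prod_i\Lambda_i$ with each $\Lambda_i$ local; correspondingly $M=\bigoplus_i e_iM$, each $e_iM$ is projective, hence free, over the local ring $\Lambda_i$, and comparing with $M\otimes_R F\cong A=\prod_i(\Lambda_i\otimes_R F)$ forces every $e_iM$ to have rank $1$, so $M\cong\prod_i\Lambda_i=\Lambda$.

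For the first statement, assume $G_0$ abelian and $\mathcal{O}_L$ free over $\mathfrak{A}_{L/K}$, and set $\mathfrak{C}:=\bigcap_{g\in G}g\,\mathfrak{A}_{L/L^{nr}}\,g^{-1}$, which by Proposition~\ref{imageberge} equals $\mathcal{O}_{L^{nr}}\otimes_{\mathcal{O}_K}(\mathfrak{A}_{L/L^{nr}}\cap K[G_0])$. This is a commutative $\mathcal{O}_{L^{nr}}$-order in $L^{nr}[G_0]$ (commutative because $G_0$ is), and $\mathcal{O}_L$ is a module over it since $\mathfrak{C}\subseteq\mathfrak{A}_{L/L^{nr}}$. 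Being free, a fortiori projective, over $\mathfrak{A}_{L/K}$, by Theorem~\ref{bergeprojective} the module $\mathcal{O}_L$ is projective over $\mathfrak{C}$; since $\mathcal{O}_L\otimes_{\mathcal{O}_{L^{nr}}}L^{nr}=L\cong L^{nr}[G_0]=\mathfrak{C}\otimes_{\mathcal{O}_{L^{nr}}}L^{nr}$ as $L^{nr}[G_0]$-modules by the normal basis theorem, the fact above gives that $\mathcal{O}_L$ is free of rank $1$ over $\mathfrak{C}$. By Proposition~\ref{associatedfree} this forces $\mathfrak{C}=\mathfrak{A}_{L/L^{nr}}$; in particular $\mathcal{O}_L$ is free over $\mathfrak{A}_{L/L^{nr}}$, and, since the image of \eqref{injection} is exactly $\mathfrak{C}$ (Proposition~\ref{imageberge}), the injection \eqref{injection} is an equality, hence in particular a surjection.

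For the converse, assume $G$ abelian and $\mathcal{O}_L$ free over $\mathfrak{A}_{L/L^{nr}}$. Since $G$ is abelian, Proposition~\ref{imageberge} already tells us that \eqref{injection} is a surjection, i.e.\ $\mathfrak{C}=\mathfrak{A}_{L/L^{nr}}$, so $\mathcal{O}_L$ is free, hence projective, over $\mathfrak{C}$; by Theorem~\ref{bergeprojective} it is then projective over $\mathfrak{A}_{L/K}$. Now $K[G]$ is commutative, $\mathcal{O}_K$ is a complete discrete valuation ring, and $\mathcal{O}_L\otimes_{\mathcal{O}_K}K=L\cong K[G]=\mathfrak{A}_{L/K}\otimes_{\mathcal{O}_K}K$ as $K[G]$-modules by the normal basis theorem, so the fact above applies and $\mathcal{O}_L$ is free of rank $1$ over $\mathfrak{A}_{L/K}$. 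In both cases we have now obtained that $\mathcal{O}_L$ is free over both $\mathfrak{A}_{L/K}$ and $\mathfrak{A}_{L/L^{nr}}$ and that \eqref{injection} is a surjection; Proposition~\ref{productindex} applied to $L/K$ and to $L/L^{nr}$ gives $m(L/K)=[\mathfrak{A}_{L/K}:\mathcal{O}_K[G]]$ and $m(L/L^{nr})=[\mathfrak{A}_{L/L^{nr}}:\mathcal{O}_{L^{nr}}[G_0]]$, while the equality case of Proposition~\ref{indexassociated} gives $[\mathfrak{A}_{L/K}:\mathcal{O}_K[G]]=[\mathfrak{A}_{L/L^{nr}}:\mathcal{O}_{L^{nr}}[G_0]]$, and chaining these yields the asserted fourfold equality.

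The one delicate point — and the reason the abelianness hypotheses cannot be dropped — is the passage from projective to free: Theorem~\ref{bergeprojective} only delivers projectivity, and over a general non-maximal order a projective module need not be free, whereas over a commutative module-finite algebra over a complete discrete valuation ring (which is a product of local rings) projective rank-one modules are free. Thus in the first statement one needs $L^{nr}[G_0]$ commutative, i.e.\ $G_0$ abelian, to treat $\mathfrak{C}$, and in the converse one needs $K[G]$ commutative, i.e.\ $G$ abelian, to treat $\mathfrak{A}_{L/K}$. One should also keep in mind the routine points that $\mathcal{O}_L$ is finitely generated over $\mathfrak{C}$ and over $\mathfrak{A}_{L/K}$ (being finite over $\mathcal{O}_{L^{nr}}$, respectively over $\mathcal{O}_K$), and that the generic isomorphisms used to pin down the ranks are genuine module isomorphisms rather than mere equalities of dimensions, both of which follow from the normal basis theorem for $L/L^{nr}$ and for $L/K$.
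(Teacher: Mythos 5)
Your proof is correct and follows essentially the same route as the paper: transfer projectivity via Berg\'e's Theorem \ref{bergeprojective}, pass from projective to free over the commutative order $\bigcap_{g\in G}g\mathfrak{A}_{L/L^{nr}}g^{-1}$ (resp.\ $\mathfrak{A}_{L/K}$), then conclude with Propositions \ref{associatedfree}, \ref{imageberge}, \ref{indexassociated} and \ref{productindex}. The only difference is that where the paper cites the ``cleanness'' of commutative orders, you prove that lemma directly by lifting idempotents over the complete discrete valuation ring and using that finitely generated projective modules over local rings are free, which is a correct, self-contained substitute for the citation.
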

\begin{proof}
For the forward implication, assume that the $\mathfrak{A}_{L/K}$-module $\mathcal{O}_L$ is free, hence also projective. By Theorem \ref{bergeprojective} we have that $\mathcal{O}_L$ is projective over $\bigcap_{g\in G}g\mathfrak{A}_{L/L^{nr}}g^{-1}$. The latter is a commutative order since $G_0$ is abelian, so it is ``clean'', that is, every projective $\bigcap_{g\in G}g\mathfrak{A}_{L/L^{nr}}g^{-1}$-lattice that spans the algebra $L^{nr}[G_0]$ is free (see \cite{MR0175950} or \cite[IX Corollary 1.5]{MR0283014}).
It follows that $\mathcal{O}_L$ is free over $\bigcap_{g\in G}g\mathfrak{A}_{L/L^{nr}}g^{-1}$, so the latter is the associated order of $\mathcal{O}_L$ in $L^{nr}[G_0]$, that is,
\[
 \bigcap_{g\in G}g\mathfrak{A}_{L/L^{nr}}g^{-1}=\mathfrak{A}_{L/L^{nr}}
\]
and $\mathcal{O}_L$ is free over $\mathfrak{A}_{L/L^{nr}}$.
By Proposition \ref{imageberge} we have that (\ref{injection}) is a bijection. Therefore, by Proposition \ref{indexassociated} and Proposition \ref{productindex}, we have
\[
 m(L/K)=[\mathfrak{A}_{L/K}:\mathcal{O}_K[G]]=[\mathfrak{A}_{L/L^{nr}}:\mathcal{O}_{L^{nr}}[G_0]]=m(L/L^{nr}).
\]
As for the converse, assume that $\mathcal{O}_L$ is free over $\mathfrak{A}_{L/L^{nr}}$, which coincides with the intersection $ \bigcap_{g\in G}g\mathfrak{A}_{L/L^{nr}}g^{-1}$ since $G$ is abelian. It follows from Theorem \ref{bergeprojective} that $\mathcal{O}_L$ is projective over $\mathfrak{A}_{L/K}$, which is commutative, hence clean. We obtain that $\mathcal{O}_L$ is free over $\mathfrak{A}_{L/K}$, and the remaining equalities are proved as above.
\end{proof}

\section{The case when the associated order is maximal}
\label{sec:assab}
 When $\mathfrak{A}_{L/K}$ is a maximal order in $K[G]$ the problem of studying $m(L/K)$ simplifies considerably. In this section we study this situation and prove Theorem \ref{thm:IntroAbsolutelyAbelian}. We start
with the following simple lemma.
\begin{lemma}\label{associatedmaximal}
 Let $L/K$ be a Galois extension of $p$-adic fields with Galois group $G$. Suppose that $\mathfrak{A}_{L/K}$ is a maximal $\mathcal{O}_K$-order 
 in $K[G]$. Then $\mathcal{O}_L$ is free over $\mathfrak{A}_{L/K}$ and $m(L/K)=
 [\mathfrak{A}_{L/K}:\mathcal{O}_K[G]]$.
\end{lemma}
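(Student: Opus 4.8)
The plan is to reduce both assertions to the general theory of maximal orders recalled earlier in the paper. First I would recall that $\mathcal{O}_L$ is a full $\mathcal{O}_K$-lattice in $L$, and that by the normal basis theorem $L \cong K[G]$ as a $K[G]$-module; thus $K\otimes_{\mathcal{O}_K}\mathcal{O}_L \cong K[G]$ as $K[G]$-modules. Since $\mathfrak{A}_{L/K}$ is by hypothesis a maximal $\mathcal{O}_K$-order in the separable $K$-algebra $K[G]$, and $\mathcal{O}_L$ is by definition a $\mathfrak{A}_{L/K}$-lattice that spans $K[G]$ after extension of scalars, the structure theory of lattices over maximal orders applies. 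Concretely, I would invoke \cite[Theorem 18.10]{MR1972204} (already cited in Subsection \ref{sect:GeneralBound}): a $\mathfrak{A}_{L/K}$-lattice spanning $K[G]$ is determined up to isomorphism by the isomorphism class of the $K[G]$-module it spans. Since $\mathcal{O}_L$ and $\mathfrak{A}_{L/K}$ span the same module $K[G]$, we conclude $\mathcal{O}_L \cong \mathfrak{A}_{L/K}$ as $\mathfrak{A}_{L/K}$-modules; in particular $\mathcal{O}_L$ is free of rank $1$ over $\mathfrak{A}_{L/K}$.

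For the index formula, once we know $\mathcal{O}_L = \mathfrak{A}_{L/K}\cdot\beta$ for some $\beta \in \mathcal{O}_L$, I would apply Proposition \ref{productindex}: since $\mathcal{O}_L$ is free over $\mathfrak{A}_{L/K}$, that proposition gives precisely $m(L/K) = [\mathfrak{A}_{L/K}:\mathcal{O}_K[G]]$. (Equivalently, one can argue directly: $\beta$ is a normal basis generator, so $x \mapsto x\beta$ is a $K$-linear isomorphism $K[G] \to L$ carrying $\mathfrak{A}_{L/K}$ onto $\mathcal{O}_L$ and $\mathcal{O}_K[G]$ onto $\mathcal{O}_K[G]\beta$, whence $[\mathcal{O}_L : \mathcal{O}_K[G]\beta] = [\mathfrak{A}_{L/K}:\mathcal{O}_K[G]]$, and this is the minimal index because $\mathfrak{A}_{L/K}\alpha \subseteq \mathcal{O}_L$ for every $\alpha$ forces $[\mathcal{O}_L:\mathcal{O}_K[G]\alpha] \geq [\mathfrak{A}_{L/K}\alpha:\mathcal{O}_K[G]\alpha] = [\mathfrak{A}_{L/K}:\mathcal{O}_K[G]]$.)

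I do not expect any serious obstacle here: the entire content is the freeness of a spanning lattice over a maximal order, which is standard, plus an application of Proposition \ref{productindex} that is already in place. The only point requiring a little care is checking the hypotheses of \cite[Theorem 18.10]{MR1972204} — namely that $K[G]$ is separable over $K$ (true since $\operatorname{char} K = 0$) and that $\mathcal{O}_L$ genuinely spans all of $K[G]$ and not a proper submodule (true by the normal basis theorem) — but this is exactly the verification already carried out for $\mathfrak{B}\mathcal{O}_L$ in Subsection \ref{sect:GeneralBound}, so it can be cited rather than repeated.
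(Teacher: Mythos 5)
Your proposal is correct and follows the same route as the paper: the paper's proof consists precisely of invoking \cite[Theorem (18.10)]{MR1972204} for the freeness of $\mathcal{O}_L$ over the maximal order $\mathfrak{A}_{L/K}$ and Proposition \ref{productindex} for the equality $m(L/K)=[\mathfrak{A}_{L/K}:\mathcal{O}_K[G]]$. Your additional verifications (separability of $K[G]$, the normal basis theorem identifying $K\otimes_{\mathcal{O}_K}\mathcal{O}_L$ with $K[G]$, and the fact that $\mathcal{O}_L$ is an $\mathfrak{A}_{L/K}$-lattice by definition of the associated order) are exactly the details the paper leaves implicit, having carried them out already for $\mathfrak{B}\mathcal{O}_L$ in Subsection \ref{sect:GeneralBound}.
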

\begin{proof}
 The first assertion follows from \cite[Theorem (18.10)]{MR1972204}, the second from Proposition \ref{productindex}.
\end{proof}
Under the assumption of Lemma \ref{associatedmaximal}, the discussion in $\S$\ref{indexmaximal} shows that the minimal index $m(L/K)$ is determined by the discriminant of a maximal order in $K[G]$. We now describe some interesting instances of this situation.

If $G$ is abelian, the Wedderburn decomposition of $K[G]$ is given by a product of cyclotomic extensions of $K$,
\[
 K[G]\cong \prod_{\gamma\in \Phi} K(\gamma),
\]
where $\Phi$ is the set of orbits of the characters of $G$ under the action of the absolute Galois group of $K$ and $K(\gamma)$ is the extension of $K$ generated by the image of any character in the orbit $\gamma$. In this case the unique maximal order is 
\[
 \mathfrak{M}\cong \prod_{\gamma\in \Phi} \mathcal{O}_{K(\gamma)},
\]
and using Proposition \ref{prop:disc} one gets
\[
 \operatorname{disc}\mathfrak{M}=\prod_{\gamma\in \Phi} \operatorname{disc}(K(\gamma)/K).
\]
Hence, by Proposition \ref{prop:MaximalOrder}, when $G$ is abelian we have
\begin{equation}
\label{formula-max-abeliano}
 v_p[\mathfrak{M} : \mathcal{O}_K[G]]=\frac{f_K}{2}\left(e_K|G|v_p(|G|)-\sum_{\gamma\in \Phi} v_K( \operatorname{disc}(K(\gamma)/K))\right).
\end{equation}
If in addition $\mathfrak{A}_{L/K}=\mathfrak{M}$, this is also the value of $m(L/K)$.
As a consequence, we can deal with the case of absolutely abelian extensions of $\mathbb{Q}_p$ when $p$ is odd. 
\begin{proof}[Proof of Theorem \ref{thm:IntroAbsolutelyAbelian}]
In this case $L$ is absolutely abelian, and the main result of \cite{MR1627831} shows that $\mathcal{O}_L$ is free over $\mathfrak{A}_{L/K}$. This, together with Proposition \ref{unramifiedindex}, implies $m(L/K)=m(L/L^{nr})$. 
Thus we can suppose that $L/K$ is totally ramified. By the local version of the Kronecker-Weber Theorem, when the extension is totally ramified we know that $L\subseteq K(\zeta_{p^n})$ for some $n$. Hence $G=\text{Gal}(L/K)$ is naturally isomorphic to a subquotient of $\text{Gal}(\Q(\zeta_{p^n})/\Q)$, which is cyclic since $p$ is odd. So $G$ is also cyclic, and the Wedderburn decomposition of $K[G]$ is
\[
 K[G]\cong \prod_{d \mid [L:K]} K(\zeta_d)^{\frac{\varphi(d)}{[K(\zeta_d):K]}}.
\]
By \cite[Theorem 1]{MR1627831} the associated order $\mathfrak{A}_{L/K}$ is the maximal order in $K[G]$. %
By Lemma \ref{associatedmaximal} and Equation \eqref{formula-max-abeliano}, we get
\[
 \begin{aligned}
 v_p(m(L/K)) &=v_p(m(L/L^{nr})) \\ & =\frac{f_{L}}{2}\left(e_K|G_0|v_p(|G_0|)-\sum_{d\mid e_{L/K}} \frac{\varphi(d)}{[{L^{nr}(\zeta_d):L^{nr}]}} v_{L^{nr}}( \operatorname{disc}(L^{nr}(\zeta_d)/L^{nr}))\right)
 \end{aligned}
\]
as required.
\end{proof}
\begin{remark}\label{rmk:CasepEqualsTwo}
The formula in the previous theorem needs to be modified for $p=2$ and $L/K$ ramified. The main difference is that  the associated order is not necessarily maximal in this case (see \cite{MR1627831}). 
In addition, since the Galois group $G$ may not be cyclic in this case, for the maximal order we get:
$$ v_2[\mathfrak{M} : \mathcal{O}_K[G]]=\frac{f_{L}}{2}\left(|G_0|v_2(|G_0|)e_K-\sum_{d\mid e_{L/K}} \frac{a(d)}{[{L^{nr}(\zeta_d):L^{nr}]}} v_{L^{nr}}( \operatorname{disc}(L^{nr}(\zeta_d)/L^{nr}))\right),
$$
where $a(d)$ is the number of element of order $d$ in $G$.
When $\mathfrak{A}_{L/K}$ is maximal, this is also  the value of $m(L/K)$, but $m(L/K)$ may be much smaller in general. A general formula may also be established in the case $p=2$ by following the description of $\mathfrak{A}_{L/K}$ given in \cite{MR1627831}, but we content ourselves with describing a case in which the result of Theorem \ref{thm:IntroAbsolutelyAbelian} does not hold for $p=2$.

Let $m\ge2$,  $L=\Q_2(\zeta_{2^m})$ and $K=\Q_2(\zeta_{2^m}+\zeta_{2^m}^{-1})$. Let $\sigma$ be a generator of the Galois group $G$ of $L/K$, which is cyclic of order 2.
We have $K[G]\cong K\oplus K$, and so $\mathfrak{M}\cong\mathcal{O}_K\oplus\mathcal{O}_K$. Via the same isomorphism, which can be taken to be the Chinese Remainder Theorem map $\alpha+\beta\sigma\to(\alpha+\beta,\alpha-\beta)$, one can check that 
\[
\mathcal{O}_K[G]\cong\{(a,b)\in\mathcal{O}_K\oplus\mathcal{O}_K\mid a-b\in 2\mathcal{O}_K\}.
\]  
By \cite[Proposition 3]{MR1627831} the associated order in this case is  the ring generated by $\frac{1-\sigma}{\pi_K}$ over $\mathcal{O}_K[G]$, and via the above isomorphism
\[
\mathfrak{A}_{L/K}\cong\{(a,b)\in\mathcal{O}_K\oplus\mathcal{O}_K\mid a-b\in \pi_K^{e_K-1}\mathcal{O}_K\}.
\]  
Since $\mathcal{O}_L$ is free over $\mathfrak{A}_{L/K}$ by \cite[Proposition 3]{MR1627831}, these explicit descriptions allow us to easily compute 
\[
m(L/K)=[\mathfrak{A}_{L/K}:\mathcal{O}_K[G]]=N(\pi_K)=2,
\]
while on the other hand
\[
[\mathfrak{M} : \mathcal{O}_K[G]]=2^{e_K}=2^{2^{m-2}}.
\]
Finally, we remark that in this case the value $m(L/K)$ may also be obtained easily from Theorem \ref{thm:IntroCyclicDegreep} (one may check that in our situation we have $t=1$ and $\nu_1=1$, hence $v_2(m(L/K))=f_K \cdot 1 = 1$). In addition, the description of the associated order can also be obtained from Theorem \ref{thm:StructureOfALK}.
\end{remark}
The discriminants of the cyclotomic extensions of $\Q_p$ are well-known, so from Theorem \ref{thm:IntroAbsolutelyAbelian} we easily deduce the following.
\begin{corollary}\label{cor:AbsolutelyAbelianCase}
 Let $L/K$ be an absolutely abelian extension of $p$-adic fields with ramification index $p^nd$, where $p$ is an odd prime and $(d,p)=1$, and suppose that $K$ is unramified over $\Q_p$. Then
\[
  m(L/K)=p^{\frac{f_Ld(p^n-1)}{p-1}}.
\]
\end{corollary}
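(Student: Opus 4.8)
The plan is to obtain this as a direct specialisation of Theorem~\ref{thm:IntroAbsolutelyAbelian}, so the only real work is evaluating the cyclotomic discriminants in that formula under the extra hypothesis that $K/\mathbb{Q}_p$ is unramified. Since $p$ is odd and $L/\mathbb{Q}_p$ is abelian, Theorem~\ref{thm:IntroAbsolutelyAbelian} applies; because $e_K=1$ we have $e_L=e_{L/K}=p^nd$ and $v_p(e_{L/K})=n$, and $L^{nr}/\mathbb{Q}_p$ is again unramified. Hence everything comes down to computing $v_{L^{nr}}(\disc(L^{nr}(\zeta_k)/L^{nr}))$ for each divisor $k$ of $p^nd$ (I rename the summation variable of Theorem~\ref{thm:IntroAbsolutelyAbelian} to $k$ to avoid the clash with $d$).

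First I would write $k=p^jm$ with $0\le j\le n$ and $m\mid d$, and split the tower $L^{nr}\subseteq L^{nr}(\zeta_m)\subseteq L^{nr}(\zeta_k)$ into its maximal unramified step $L^{nr}(\zeta_m)/L^{nr}$ --- of degree $u_m:=[L^{nr}(\zeta_m):L^{nr}]$ and trivial discriminant --- followed by the totally ramified step $L^{nr}(\zeta_m)(\zeta_{p^j})/L^{nr}(\zeta_m)$ of degree $\varphi(p^j)$. The key input is that the different of $\mathbb{Q}_p(\zeta_{p^j})/\mathbb{Q}_p$ is generated by $\Phi_{p^j}'(\zeta_{p^j})$ and has valuation $p^{j-1}(jp-j-1)$, and that this value is unchanged under the unramified base change to $L^{nr}(\zeta_m)$, since $\Phi_{p^j}$ remains the minimal polynomial of $\zeta_{p^j}$ there. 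Pushing the resulting discriminant down to $L^{nr}$ through the unramified step multiplies its valuation by $u_m$, so for $j\ge1$ one gets $v_{L^{nr}}(\disc(L^{nr}(\zeta_k)/L^{nr}))=u_m\,p^{j-1}(jp-j-1)$, while for $j=0$ the extension is unramified and the valuation is $0$.

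Next I would substitute this into the formula of Theorem~\ref{thm:IntroAbsolutelyAbelian}. Since $[L^{nr}(\zeta_k):L^{nr}]=u_m\varphi(p^j)$ and $\varphi(k)=\varphi(m)\varphi(p^j)$, the factor $u_m$ cancels and the $k$-th summand becomes $\varphi(m)\,p^{j-1}(jp-j-1)$; summing over $m\mid d$ and using $\sum_{m\mid d}\varphi(m)=d$ collapses the double sum to $d\sum_{j=1}^{n}p^{j-1}(jp-j-1)$. The argument then finishes with the elementary identity
\[
np^n-\sum_{j=1}^{n}p^{j-1}(jp-j-1)=\frac{2(p^n-1)}{p-1},
\]
obtained by writing $jp-j-1=j(p-1)-1$ and evaluating $\sum_{j=1}^n jp^{j-1}$ and $\sum_{j=1}^n p^{j-1}$ by the usual geometric-series manipulations. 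Feeding this into $v_p(m(L/K))=\tfrac{f_L}{2}\bigl(e_Lv_p(e_{L/K})-d\sum_{j}p^{j-1}(jp-j-1)\bigr)$ gives $v_p(m(L/K))=\frac{f_Ld(p^n-1)}{p-1}$, which is the claim.

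The computation is essentially bookkeeping, so I do not expect a genuine obstacle; the step that needs the most care is the clean multiplicative splitting $v_{L^{nr}}(\disc(L^{nr}(\zeta_k)/L^{nr}))=u_m\cdot p^{j-1}(jp-j-1)$, which rests on the stability of the different under unramified base change together with the tame/wild factorisation $\zeta_k=\zeta_m\zeta_{p^j}$. Once that is in hand, the cancellation of $u_m$ and the final telescoping identity are routine.
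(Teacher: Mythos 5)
Your proposal is correct and follows essentially the same route as the paper: specialise Theorem~\ref{thm:IntroAbsolutelyAbelian}, evaluate the cyclotomic discriminants over $L^{nr}$ by separating the tame part $\zeta_m$ (unramified, trivial discriminant) from the wild part $\zeta_{p^j}$ (contributing $p^{j-1}(jp-j-1)$), and sum using $\sum_{m\mid d}\varphi(m)=d$ together with the telescoping identity. The only cosmetic difference is that you compute the relative discriminant via the different of $\Phi_{p^j}$ and its invariance under unramified base change, whereas the paper starts from the absolute discriminant of $\Q_p(\zeta_{p^r s})/\Q_p$ and descends through $K\cap\Q_p(\zeta_s)$ by transitivity in towers; both yield the same intermediate formula $\varphi(m)\,p^{j-1}(jp-j-1)$ for each summand.
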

\begin{proof}
Up to replacing $K$ with $L^{nr}$ we can assume that $L$ is totally ramified over $K$ of degree $p^nd$, so from Theorem \ref{thm:IntroAbsolutelyAbelian} we have
\begin{equation}\label{eq:thFormula}
v_p(m(L/K))=\frac{f_L}{2}\left(dp^n n-\sum_{\delta\mid dp^n } \frac{\varphi(\delta)}{[K(\zeta_\delta):K]} v_{p}( \operatorname{disc}(K(\zeta_\delta)/K))\right).
\end{equation}
Let $s$ be a positive integer relatively prime to $p$ and  let $f_s= [\Q_p(\zeta_{s}):\Q_p].$ We recall that the extension $\Q_p(\zeta_{s})/\Q_p$ is unramified, so its discriminant is trivial,
while for each $r\ge1$ we have
\begin{equation}
\label{eq:disc}
 \operatorname{disc}(\Q_p(\zeta_{p^rs})/\Q_p )=p^{p^{r-1}(pr-r-1)f_s}
\end{equation}
(this follows e.g.~from \cite[Theorem 2.20]{MR2078267}, combined with the transitivity of the discriminant in towers). The extension $K/\Q_p$ is unramified, so $K\cap\Q_p(\zeta_{p^rs})= K\cap\Q_p(\zeta_{s})$; denote this extension by $K_s$. Letting $[K_s:\Q_p]=l_s$, the  extensions $K/K_s$ and $K(\zeta_{p^rs})/\Q_p(\zeta_{p^rs})$ have degree $\frac{f_K}{l_s}$ and, being unramified,  have trivial discriminant. We also have 
\[
[K(\zeta_{p^rs}):K]=[\Q_p(\zeta_{p^rs}):K_s]=\varphi(p^r)\frac{f_s}{l_s}.
\]
By transitivity of the discriminant in towers of extensions we get
$$ \operatorname{disc}(\Q_p(\zeta_{p^rs})/\Q_p )=N_{K_s/\Q_p}(\operatorname{disc}(\Q_p(\zeta_{p^rs})/K_s ))=\operatorname{disc}(\Q_p(\zeta_{p^rs})/K_s )^{l_s}.$$
Computing $\operatorname{disc}(K(\zeta_{p^rs})/K_s)$ along the extensions $K(\zeta_{p^rs}) / \mathbb{Q}_p(\zeta_{p^rs})/K_s$ and $K(\zeta_{p^rs}) / K/K_s$ we also obtain
\[
\operatorname{disc}(\Q_p(\zeta_{p^rs})/K_s)^{[K(\zeta_{p^rs}):\Q_p(\zeta_{p^rs})]}=
N_{ K/K_s}(\operatorname{disc}(K(\zeta_{p^rs})/K ))=\operatorname{disc}(K(\zeta_{p^rs})/K )^{[K:K_s]},
\]
so from \eqref{eq:disc} we get 
\[
 \operatorname{disc}(K(\zeta_{p^rs})/K )=\operatorname{disc}(\Q_p(\zeta_{p^rs})/\Q_p )^{\frac{1}{l_s}}=p^{p^{r-1}(pr-r-1)\frac{f_s}{l_s}}.
\]
This gives
\[
 \frac{\varphi(p^rs)}{[K(\zeta_{p^rs}):K]} v_p\left( \operatorname{disc}(K(\zeta_{p^rs})/K)\right)=
 \varphi(s)p^{r-1}(pr-r-1),
\]
and, by Equation \eqref{eq:thFormula},
\begin{align*}
 v_p(m(L/K))=&\frac{f_L}{2}\left(dp^n n-\sum_{r=1}^n\sum_{s\mid d} \varphi(s)p^{r-1}(pr-r-1)\right)\\
 =&\frac{f_L}{2}\left(dp^n n-d\sum_{r=1}^np^{r-1}(pr-r-1)\right)\\
 =&f_L d\frac{p^n-1}{p-1}.
\end{align*}
\end{proof}

There are also other known cases in which the associated order is maximal. From now on, we no longer assume that $p$ is odd.
\begin{lemma}[{\cite[Corollaire 3 to Théorème 1]{MR513880}}]\label{bergealmostmaximal}
Let $L/K$ be a totally ramified cyclic extension of $p$-adic fields with almost-maximal ramification.
Assume that $K$ is unramified over $\Q_p$. Then $\mathfrak{A}_{L/K}$ is a maximal order.
\end{lemma}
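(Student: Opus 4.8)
The plan is to compare $\mathfrak A_{L/K}$ with the maximal order of $K[G]$ through discriminants. Since $G$ is abelian, $K[G]$ has a \emph{unique} maximal $\OO_K$-order $\mathfrak M$, and $\mathfrak A_{L/K}\subseteq\mathfrak M$; by Proposition~\ref{prop:disc}(1), applied to the trace form on the $K$-algebra $K[G]$, it is then enough to prove the single equality of ideals $\disc\mathfrak A_{L/K}=\disc\mathfrak M$, equivalently $v_K(\disc\mathfrak A_{L/K})=v_K(\disc\mathfrak M)$, the inequality ``$\leq$'' being automatic. The right-hand side is explicit: writing $|G|=dp^n$ with $(d,p)=1$, the Wedderburn decomposition is $K[G]\cong\prod_{\delta\mid dp^n}K(\zeta_\delta)^{\varphi(\delta)/[K(\zeta_\delta):K]}$, so $\mathfrak M\cong\prod_{\delta}\OO_{K(\zeta_\delta)}^{\varphi(\delta)/[K(\zeta_\delta):K]}$, and since $K/\Q_p$ is unramified the discriminants of the cyclotomic extensions $K(\zeta_\delta)/K$ are governed by~\eqref{eq:disc}; alternatively one just uses Proposition~\ref{prop:MaximalOrder} to write $v_K(\disc\mathfrak M)=|G|v_p(|G|)-2A$ (recall $e_K=1$).

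For the left-hand side I would invoke an explicit description of $\mathfrak A_{L/K}$ valid whenever $\mathrm{Gal}(L/K)$ is cyclic and $L/K$ is totally ramified --- this is the substantive input, and it is precisely what Bergé's Théorème~1 of \cite{MR513880} provides. Concretely, if $\sigma$ generates $G$ and $N=|G|$, then $\mathfrak A_{L/K}$ is the ``staircase'' $\OO_K$-order $\bigoplus_{j=0}^{N-1}\pi_K^{-c_j}(\sigma-1)^j\OO_K$, where $c_0=0$ and the exponents $c_j\geq 0$ are determined by floor-function expressions in the ramification jumps $t_1<\cdots<t_n$ of $L/K$, entirely analogous to the numbers $\nu_i$ appearing in Theorem~\ref{thm:IntroCyclicDegreep}. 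As $\{(\sigma-1)^j\}_{j=0}^{N-1}$ is an $\OO_K$-basis of $\OO_K[G]$ (the change of basis from $\{\sigma^j\}$ is unipotent triangular), this yields $[\mathfrak A_{L/K}:\OO_K[G]]_{\OO_K}=\pi_K^{\sum_j c_j}$, hence, using the computation $\disc\OO_K[G]=|G|^{|G|}\OO_K$ from Subsection~\ref{indexmaximal} and Proposition~\ref{prop:disc}(1) once more, $v_K(\disc\mathfrak A_{L/K})=|G|v_p(|G|)-2\sum_{j=0}^{N-1}c_j$.

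Combining the two computations, the lemma reduces to the numerical identity $\sum_{j=0}^{N-1}c_j=A$, and this is where --- and only where --- the hypotheses enter. Because $K$ is unramified, applying Proposition~\ref{prop:InequalitiesOneAndt} to the degree-$p$ subquotients of $L/K$, together with the Hasse--Arf theorem, forces the jumps $t_1,\ldots,t_n$ to be as large as possible given $t_1$; the almost-maximality condition $\tfrac{e_Kdp}{p-1}-t_1\leq 1$ then pins down $t_1$ (for instance one finds $t_1=d$ when $p$ is odd) and hence the entire ramification filtration. Substituting these values into the floor-function formulas for the $c_j$ on the one side, and into the cyclotomic-discriminant expansion of $v_K(\disc\mathfrak M)$ on the other, the identity $\sum_j c_j=A$ should drop out after a telescoping manipulation, uniformly in $n$. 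Carrying out this last verification cleanly --- reconciling the ``staircase sum'' $\sum_j c_j$ with the conductor--discriminant sum --- is the main obstacle, and is in effect exactly Bergé's deduction of her Corollaire~3 from Théorème~1. An alternative organisation is available through the idempotent criterion recalled after Definition~\ref{def:AlmostMaximallyRamified}: almost-maximality says precisely that every $e_H$, with $H$ trapped between two consecutive ramification groups, lies in $\mathfrak A_{L/K}$ (by \eqref{eq:RamificationInequality}), and the hypothesis that $K$ is unramified guarantees that the image of $\OO_K[G]$ in each simple Wedderburn factor of $K[G]$ is already the full ring of integers $\OO_{K(\zeta_\delta)}$; one then has to check that these idempotents together with $\OO_K[G]$ generate all of $\mathfrak M$, which once again is where the ramification bookkeeping lives.
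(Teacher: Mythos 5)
The paper does not actually prove this lemma: it is quoted directly from Bergé (Corollaire 3 to Théorème 1 of \cite{MR513880}), so the only thing to assess is whether your argument stands on its own — and as written it does not. Your framing is sound (for an abelian $G$ the maximal order $\mathfrak{M}$ is unique, $\mathfrak{A}_{L/K}\subseteq\mathfrak{M}$, and by Proposition \ref{prop:disc}(1) equality of discriminants forces $[\mathfrak{M}:\mathfrak{A}_{L/K}]=1$, with $v_K(\operatorname{disc}\mathcal{O}_K[G])=|G|v_p(|G|)$ giving the two index formulas you state). But everything substantive is deferred. First, the ``staircase'' description $\mathfrak{A}_{L/K}=\bigoplus_j\pi_K^{-c_j}(\sigma-1)^j\mathcal{O}_K$ for a general totally ramified cyclic extension is invoked without proof and without even specifying the exponents $c_j$; the paper only establishes this in degree $p$ (Theorem \ref{thm:StructureOfALK}), and in general it is precisely Bergé's Théorème 1 — in fact for composite degree $dp^n$ the associated order need not be generated by powers of a single $\sigma-1$ in this simple form, so the shape of the claimed description itself would need care. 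Second, the entire lemma is reduced to the identity $\sum_j c_j=A$, and you explicitly do not verify it (``should drop out after a telescoping manipulation''); that verification is not a routine afterthought, it is the content of the result. Third, the intermediate assertion that $K/\mathbb{Q}_p$ unramified together with Hasse--Arf ``forces the jumps $t_1,\ldots,t_n$ to be as large as possible given $t_1$'' is stated without justification, and this ramification bookkeeping is exactly where the hypotheses (unramifiedness of $K$ and almost-maximality) do their work.

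So the proposal is a plausible plan — essentially a plan to re-derive Bergé's Corollaire 3 from her Théorème 1 via a discriminant comparison — but it contains a genuine gap at its core: neither the explicit description of $\mathfrak{A}_{L/K}$ nor the numerical identity $\sum_j c_j=A$ is established, and the alternative idempotent route sketched at the end has the same missing verification (that the idempotents $e_H$ together with $\mathcal{O}_K[G]$ generate all of $\mathfrak{M}$). Since the paper itself handles this statement purely by citation, the honest options are either to cite Bergé's Corollaire 3 directly, as the authors do, or to carry out in full the floor-function computation of the $c_j$ and its comparison with the cyclotomic discriminant sum; as it stands your text does neither.
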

\begin{corollary}
\label{cor:almax}
Let $L/K$ be a Galois extension of $p$-adic fields with group $G$. Assume that $L/L^{nr}$ is cyclic of order $p^nd$ (with $(p,d)=1$) and almost-maximally ramified, and that $K/\mathbb{Q}_p$ is unramified.
\begin{enumerate}
\item $\mathcal{O}_L$ is free over $\mathfrak{A}_{L/L^{nr}}$, and  if $G$ is abelian it is also free over  $\mathfrak{A}_{L/K}$.
\item Assume $\mathcal{O}_L$ is free over $\mathfrak{A}_{L/K}$. Then $m(L/K)=p^{\frac{f_Ld(p^n-1)}{p-1}}$.
\end{enumerate}
\end{corollary}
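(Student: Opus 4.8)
The plan is to push everything down to the totally ramified extension $L/L^{nr}$, where the almost-maximal ramification hypothesis lets us invoke B\'erg\'e's theorem that the associated order is maximal, and then transport the conclusion back up to $L/K$ using the results of Subsection~\ref{subsect:ReductionToTotallyRamifiedCase}. First I would record the two bookkeeping facts that make this legitimate. Since $K/\mathbb{Q}_p$ is unramified and $L^{nr}/K$ is unramified by construction, the field $L^{nr}$ is unramified over $\mathbb{Q}_p$, so $e_{L^{nr}}=1$ and $f_{L^{nr}}=f_L$. Moreover $G_0:=\operatorname{Gal}(L/L^{nr})$, the inertia subgroup of $G$, is cyclic of order $p^n d$ by hypothesis, and $L/L^{nr}$ is totally ramified and almost-maximally ramified. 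These are precisely the hypotheses of Lemma~\ref{bergealmostmaximal} applied to $L/L^{nr}$, so $\mathfrak{A}_{L/L^{nr}}$ is a maximal order in $L^{nr}[G_0]$.

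For part (1), maximality of $\mathfrak{A}_{L/L^{nr}}$ together with Lemma~\ref{associatedmaximal} gives that $\mathcal{O}_L$ is free over $\mathfrak{A}_{L/L^{nr}}$. If in addition $G$ is abelian, the converse half of Proposition~\ref{unramifiedindex} immediately upgrades this to freeness of $\mathcal{O}_L$ over $\mathfrak{A}_{L/K}$.

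For part (2), the standing assumption that $\mathcal{O}_L$ is free over $\mathfrak{A}_{L/K}$, combined with $G_0$ being abelian, puts us in the forward case of Proposition~\ref{unramifiedindex}, which yields $m(L/K)=m(L/L^{nr})=[\mathfrak{A}_{L/L^{nr}}:\mathcal{O}_{L^{nr}}[G_0]]$. Since by part (1) this associated order is the maximal order $\mathfrak{M}$ of $L^{nr}[G_0]$, and since $G_0$ is cyclic, the index is computed by Equation~\eqref{formula-max-abeliano} with $K$ replaced by $L^{nr}$ and $G$ by $G_0$; using $e_{L^{nr}}=1$ and $f_{L^{nr}}=f_L$, the right-hand side becomes literally Equation~\eqref{eq:thFormula} with base $L^{nr}$. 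The last step is the cyclotomic discriminant computation run in the proof of Corollary~\ref{cor:AbsolutelyAbelianCase}: it uses nothing about $L$ beyond the facts that the base field is unramified over $\mathbb{Q}_p$ and that the discriminants of $p$-power cyclotomic extensions are given by \eqref{eq:disc}, so it applies verbatim and evaluates the expression to $\tfrac{f_L d(p^n-1)}{p-1}$, whence $m(L/K)=p^{f_L d(p^n-1)/(p-1)}$.

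The one point that deserves a moment's care — and the closest thing to an obstacle — is that one cannot simply cite Corollary~\ref{cor:AbsolutelyAbelianCase} itself, because $L/L^{nr}$ need not be \emph{absolutely} abelian: a cyclic extension of the unramified (hence abelian) field $L^{nr}$ may fail to be Galois over $\mathbb{Q}_p$. What is actually reused is only the arithmetic of cyclotomic discriminants over an unramified base, which is insensitive to this distinction once we know from part (1) that $\mathfrak{A}_{L/L^{nr}}$ is maximal and that $G_0$ is cyclic; everything else is supplied by Proposition~\ref{unramifiedindex} and Lemma~\ref{associatedmaximal}.
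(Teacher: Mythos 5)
Your proposal is correct and follows essentially the same route as the paper: Lemma~\ref{bergealmostmaximal} applied to $L/L^{nr}$ gives maximality of $\mathfrak{A}_{L/L^{nr}}$, Lemma~\ref{associatedmaximal} and Proposition~\ref{unramifiedindex} handle freeness and the reduction $m(L/K)=m(L/L^{nr})=[\mathfrak{A}_{L/L^{nr}}:\mathcal{O}_{L^{nr}}[G_0]]$, and the value is extracted from Equation~\eqref{formula-max-abeliano} together with the discriminant computation in the proof of Corollary~\ref{cor:AbsolutelyAbelianCase}. Your observation that one reuses only that computation (over the unramified base $L^{nr}$) rather than citing Corollary~\ref{cor:AbsolutelyAbelianCase} itself is exactly how the paper proceeds as well.
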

\begin{proof}
\begin{enumerate}
\item 
$L/L^{nr}$ satisfies the assumptions of Lemma \ref{bergealmostmaximal}, hence $\mathfrak{A}_{L/L^{nr}}$ is a maximal order in $L^{nr}[G]$, and in particular $\mathcal{O}_L$ is free over $\mathfrak{A}_{L/L^{nr}}$ by Lemma \ref{associatedmaximal}. If $G$ is abelian, then 
 Proposition \ref{unramifiedindex} applies giving that $\mathcal{O}_L$ is free also over $\mathfrak{A}_{L/K}$.
\item 
By Proposition \ref{unramifiedindex} we see that $m(L/K)=m(L/L^{nr})=[\mathfrak{A}_{L/L^{nr}} : \mathcal{O}_{L^{nr}}[G_0]]$; since $\mathfrak{A}_{L/L^{nr}}$ is a maximal order of $L^{nr}[G_0]$, it suffices to compute the index of $\mathcal{O}_{L^{nr}}[G_0]$ inside a maximal order of $L^{nr}[G_0]$. Using Equation \eqref{formula-max-abeliano} and the same computation of discriminants as in the proof of Corollary \ref{cor:AbsolutelyAbelianCase} we get the result.
\end{enumerate}
\end{proof}
It is possible for the associated order to be maximal also when $K/\mathbb{Q}_p$ is ramified. This happens for instance whenever $L/K$ is an almost-maximally ramified Kummer extension of degree $p$: see Proposition \ref{prop:aeq0}, Corollary \ref{cor:AEqZeroOLIsAFree} and Remark \ref{rmk:greither}.
\begin{remark} If $G$ is abelian, a necessary condition for the associated order to be maximal is that $G$ is cyclic. For other considerations in the abelian case, see the survey of Thomas \cite{MR2760251}.
\end{remark}

\section{A local-global principle}
\label{sec:local-global}
Our main goal in this paper is to investigate the index $m(L/K)$ for a Galois extension $L/K$ of $p$-adic fields. In this section we shift slightly away from this setting, and show that in some special cases it is possible to study an analogous quantity $m(L/K)$ for extensions of number fields by reducing it to the local indices at the various completions. First of all note that, if $L/K$ is a Galois extension of number fields, the quantity
\[
 m(L/K):=\min_{\alpha \in \mathcal{O}_L} [\mathcal{O}_L : \mathcal{O}_K[G] \alpha]
\]
is still well-defined: in fact, as in the case of $p$-adic fields,
 any normal basis generator gives a finite index. However, in this case it is less interesting to define $m(L/K)$ as a minimum, since for instance it could be the case that the greatest common divisor of all the possible finite indices is strictly smaller than their minimum.
We can still make some interesting remarks about the globally defined index. As a preliminary observation, note that the associated order $\mathfrak{A}_{L/K}$ is defined exactly as for extensions of $p$-adic fields, and this continues to be the only possible candidate for an $\mathcal{O}_K$-order over which $\mathcal O_L$ is free. With the same proof as Proposition \ref{productindex}, we have the following.
\begin{proposition}\label{productindexglobal}
 Let $L/K$ be a Galois extension of number fields with Galois group $G$ such that $\mathcal{O}_L$ is free as an $\mathfrak{A}_{L/K}$-module. Then 
\[
m(L/K)=[\mathfrak{A}_{L/K}:\mathcal{O}_K[G]],
\]
and the minimal index is realised exactly by the generators of $\mathcal{O}_L$ as an $\mathfrak{A}_{L/K}$-module.   
\end{proposition}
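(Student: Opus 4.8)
The plan is to follow \emph{verbatim} the argument used for Proposition~\ref{productindex}, the only change being that $\mathcal{O}_K$ is now merely a Dedekind domain, so we must interpret every index as the cardinality of a finite quotient group rather than through the determinant formula~\eqref{eq:ind-det}. First I would pick, using the hypothesis, a generator $\beta$ of $\mathcal{O}_L$ as an $\mathfrak{A}_{L/K}$-module. Such a $\beta$ is automatically a normal basis generator for $L/K$: extending scalars to $K$ and using that $K\cdot\mathfrak{A}_{L/K}=K[G]$ gives $L=K\cdot\mathcal{O}_L=K[G]\beta$. Hence $[\mathcal{O}_L:\mathcal{O}_K[G]\beta]$ is finite, the minimum defining $m(L/K)$ is attained, and it may be computed over normal basis generators only.

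Next, for an arbitrary normal basis generator $\alpha\in\mathcal{O}_L$ I would use the chain of full $\mathcal{O}_K$-lattices $\mathcal{O}_K[G]\alpha\subseteq\mathfrak{A}_{L/K}\alpha\subseteq\mathcal{O}_L$ (the last inclusion holds by the very definition of the associated order, the middle term being a full lattice since $\alpha$ is a normal basis generator) to factor
\[
[\mathcal{O}_L:\mathcal{O}_K[G]\alpha]=[\mathcal{O}_L:\mathfrak{A}_{L/K}\alpha]\cdot[\mathfrak{A}_{L/K}\alpha:\mathcal{O}_K[G]\alpha].
\]
The key observation is that the $K$-linear map $\varphi_\alpha\colon K[G]\to L$, $x\mapsto x\alpha$, is an isomorphism --- this is precisely the assertion that $\alpha$ is a normal basis generator --- and it carries $\mathcal{O}_K[G]$ onto $\mathcal{O}_K[G]\alpha$ and $\mathfrak{A}_{L/K}$ onto $\mathfrak{A}_{L/K}\alpha$. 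It therefore induces an isomorphism of finite abelian groups $\mathfrak{A}_{L/K}/\mathcal{O}_K[G]\xrightarrow{\ \sim\ }\mathfrak{A}_{L/K}\alpha/\mathcal{O}_K[G]\alpha$, so that $[\mathfrak{A}_{L/K}\alpha:\mathcal{O}_K[G]\alpha]=[\mathfrak{A}_{L/K}:\mathcal{O}_K[G]]$, a quantity independent of $\alpha$.

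Putting the two steps together, $[\mathcal{O}_L:\mathcal{O}_K[G]\alpha]=[\mathfrak{A}_{L/K}:\mathcal{O}_K[G]]\cdot[\mathcal{O}_L:\mathfrak{A}_{L/K}\alpha]$ for every normal basis generator $\alpha$. Since $[\mathcal{O}_L:\mathfrak{A}_{L/K}\alpha]\geq1$, with equality exactly when $\mathfrak{A}_{L/K}\alpha=\mathcal{O}_L$, i.e.\ when $\alpha$ generates $\mathcal{O}_L$ over $\mathfrak{A}_{L/K}$ (and such $\alpha$ exist, e.g.\ $\beta$), taking the minimum over all $\alpha$ gives $m(L/K)=[\mathfrak{A}_{L/K}:\mathcal{O}_K[G]]$, the minimum being realised precisely by the $\mathfrak{A}_{L/K}$-generators of $\mathcal{O}_L$. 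I do not anticipate a genuine obstacle here; the only step requiring care --- and the place where the number field setting differs from the local one --- is the index bookkeeping: because $\mathcal{O}_K$ need not be a principal ideal domain, one works throughout with cardinalities of finite quotient groups, invoking the multiplicativity of the group index along the above chain and the elementary fact that a $K$-linear isomorphism sends a pair of full lattices to a pair with the same index.
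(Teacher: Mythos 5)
Your argument is correct and is essentially the paper's own: the paper proves the global statement by running the proof of Proposition~\ref{productindex} verbatim, i.e.\ factoring $[\mathcal{O}_L:\mathcal{O}_K[G]\alpha]=[\mathcal{O}_L:\mathfrak{A}_{L/K}\alpha]\cdot[\mathfrak{A}_{L/K}\alpha:\mathcal{O}_K[G]\alpha]$ and identifying the second factor with $[\mathfrak{A}_{L/K}:\mathcal{O}_K[G]]$ via $\varphi_\alpha$, exactly as you do. Your remark that over a general Dedekind domain one should read all indices as cardinalities of finite quotients (rather than via the determinant formula) is a sensible and correct clarification, not a departure from the paper's route.
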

Our main theorem is the following.
\begin{theorem}\label{thm:GlobalCase}
Let $L/\Q$ be a finite abelian extension. Then for every rational prime $p$ and every prime $\mathfrak{P}$ of $L$ above $p$ we have 
\[
\displaystyle
  v_p(m(L/\Q))=\frac{[L:\Q]}{e(\mathfrak{P}|p)f(\mathfrak{P}|p)}v_p(m(L_\mathfrak{P}/\Q_p)).
\]
 Hence
$
  m(L/\Q)=\prod_{\mathfrak{P}|p} m(L_\mathfrak{P}/\Q_p),
$
where the products runs over all the primes $\mathfrak{P}$ of $L$.
\end{theorem}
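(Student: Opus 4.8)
The strategy is to express each side in terms of an index of a group ring inside an associated order, and then to relate the global such index to the local ones through a decomposition of $\mathfrak{A}_{L/\mathbb{Q}}$ over the various decomposition groups, in the spirit of the description of $\mathfrak{A}_{L/K}$ via the inertia group used in Subsection~\ref{subsect:ReductionToTotallyRamifiedCase}.

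First, since $L/\mathbb{Q}$ is abelian, Leopoldt's theorem shows that $\mathcal{O}_L$ is free over $\mathfrak{A}_{L/\mathbb{Q}}$, so Proposition~\ref{productindexglobal} gives $m(L/\mathbb{Q})=[\mathfrak{A}_{L/\mathbb{Q}}:\mathbb{Z}[G]]$. Fix a rational prime $p$ and a prime $\mathfrak{P}\mid p$ of $L$; write $D=D_{\mathfrak{P}}\leq G$ for the decomposition group, which does not depend on $\mathfrak{P}$ because $G$ is abelian, and note that $|D|=e(\mathfrak{P}\mid p)f(\mathfrak{P}\mid p)$ and that $[G:D]$ is the number of primes of $L$ above $p$, so $[G:D]=[L:\mathbb{Q}]/(e(\mathfrak{P}\mid p)f(\mathfrak{P}\mid p))$. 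The completion $L_{\mathfrak{P}}/\mathbb{Q}_p$ is abelian with group $D$, so by \cite{MR1627831} its ring of integers is free over $\mathfrak{A}_{L_{\mathfrak{P}}/\mathbb{Q}_p}$, and Proposition~\ref{productindex} gives $m(L_{\mathfrak{P}}/\mathbb{Q}_p)=[\mathfrak{A}_{L_{\mathfrak{P}}/\mathbb{Q}_p}:\mathbb{Z}_p[D]]$. (When $p$ is unramified in $L$ this index equals $1$ by Theorem~\ref{tamepadic}, so only the finitely many ramified primes will contribute below.)

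The heart of the argument is the identity
\[
\mathfrak{A}_{L/\mathbb{Q}}\otimes_{\mathbb{Z}}\mathbb{Z}_p \;=\; \bigoplus_{s\in G/D} s\cdot\mathfrak{A}_{L_{\mathfrak{P}}/\mathbb{Q}_p}
\]
inside $\mathbb{Q}_p[G]=\bigoplus_{s\in G/D}s\cdot\mathbb{Q}_p[D]$, the sums being over a fixed set of coset representatives. To prove it, recall that forming the associated order commutes with the flat base change $\mathbb{Z}\to\mathbb{Z}_p$, so the left-hand side is $\{\lambda\in\mathbb{Q}_p[G]:\lambda\cdot(\mathcal{O}_L\otimes\mathbb{Z}_p)\subseteq\mathcal{O}_L\otimes\mathbb{Z}_p\}$. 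Now $\mathcal{O}_L\otimes\mathbb{Z}_p=\bigoplus_{\mathfrak{Q}\mid p}\mathcal{O}_{L_{\mathfrak{Q}}}$, and $G$ permutes the summands simply transitively up to the action of $D$, each $g\in D$ acting on $\mathcal{O}_{L_{\mathfrak{P}}}$ through $\operatorname{Gal}(L_{\mathfrak{P}}/\mathbb{Q}_p)$. Writing $\lambda=\sum_s s\,b_s$ with $b_s\in\mathbb{Q}_p[D]$, commutativity of $G$ lets one move each $b_s$ past the coset representatives and absorb any element of $D$ into the (invariant) summand $\mathcal{O}_{L_{\mathfrak{P}}}$; tracking where $\lambda$ sends the summands then shows that $\lambda$ preserves $\mathcal{O}_L\otimes\mathbb{Z}_p$ exactly when every $b_s$ satisfies $b_s\mathcal{O}_{L_{\mathfrak{P}}}\subseteq\mathcal{O}_{L_{\mathfrak{P}}}$, i.e.\ when $b_s\in\mathfrak{A}_{L_{\mathfrak{P}}/\mathbb{Q}_p}$. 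This is the analogue for the decomposition group of Jacobinski's formula recalled in Subsection~\ref{subsect:ReductionToTotallyRamifiedCase}, and I expect this step to be the only real difficulty; everything else is bookkeeping.

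Finally, since $\mathbb{Z}_p[D]\subseteq\mathfrak{A}_{L_{\mathfrak{P}}/\mathbb{Q}_p}\subseteq\mathbb{Q}_p[D]$, the two direct-sum decompositions above are compatible, and multiplication by the unit $s\in\mathbb{Q}_p[G]$ preserves $\mathbb{Z}_p$-indices; hence
\[
[\mathfrak{A}_{L/\mathbb{Q}}\otimes\mathbb{Z}_p:\mathbb{Z}_p[G]]=[\mathfrak{A}_{L_{\mathfrak{P}}/\mathbb{Q}_p}:\mathbb{Z}_p[D]]^{[G:D]}.
\]
By Remark~\ref{remark:nodvr} the left-hand side has the same $p$-adic valuation as $[\mathfrak{A}_{L/\mathbb{Q}}:\mathbb{Z}[G]]=m(L/\mathbb{Q})$, while the right-hand side equals $m(L_{\mathfrak{P}}/\mathbb{Q}_p)^{[G:D]}$, which gives the stated formula $v_p(m(L/\mathbb{Q}))=[G:D]\,v_p(m(L_{\mathfrak{P}}/\mathbb{Q}_p))$. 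For the product formula, observe that all completions $L_{\mathfrak{P}}$ with $\mathfrak{P}\mid p$ are isomorphic (as $L/\mathbb{Q}$ is Galois), so $\prod_{\mathfrak{P}\mid p}m(L_{\mathfrak{P}}/\mathbb{Q}_p)=m(L_{\mathfrak{P}}/\mathbb{Q}_p)^{[G:D]}$ has $p$-adic valuation equal to $v_p(m(L/\mathbb{Q}))$; since each $m(L_{\mathfrak{Q}}/\mathbb{Q}_q)$ is a power of $q$, comparing $p$-adic valuations for every $p$ yields $m(L/\mathbb{Q})=\prod_{\mathfrak{P}}m(L_{\mathfrak{P}}/\mathbb{Q}_p)$, the product over all primes $\mathfrak{P}$ of $L$.
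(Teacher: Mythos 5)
Your proof is correct and follows essentially the same route as the paper: reduce to $[\mathfrak{A}_{L/\Q}:\Z[G]]$ via Leopoldt's theorem and Proposition \ref{productindexglobal}, pass to the $p$-adic completion, identify $\mathfrak{A}_{L/\Q}\otimes_{\Z}\Z_p$ with $\mathfrak{A}_{L_\mathfrak{P}/\Q_p}\otimes_{\Z_p[D]}\Z_p[G]$ using the decomposition group $D$, and compare indices, with the local freeness result of \cite{MR1627831} supplying $m(L_\mathfrak{P}/\Q_p)=[\mathfrak{A}_{L_\mathfrak{P}/\Q_p}:\Z_p[D]]$. The only (minor) divergence is that you verify the key decomposition of the completed associated order by a direct coset computation exploiting commutativity of $G$, whereas the paper cites Berg\'e's general formula \cite[Equation (2)]{MR747998} and then simplifies it in the abelian case.
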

\begin{proof}
Let $G$ be the Galois group of $L/\Q$. By Proposition \ref{productindexglobal} and Leopoldt's Theorem \cite{MR0108479} we know that
$
m(L/\Q)=[\mathfrak{A}_{L/\Q}:\Z[G]].
$
Note that $\mathfrak{A}_{L/\Q}$ and $\Z[G]$ are two $\Z$-lattices in $\Q[G]$, hence both are free of rank $|G|$. Let $f:\Q[G]\rightarrow \Q[G]$ be a map of $\Q$-vector spaces such that $f(\mathfrak{A}_{L/\Q})=\Z[G]$. Then by Remark \ref{remark:nodvr} we have
\[
[\mathfrak{A}_{L/\Q}:\Z[G]]=|\det f|.
\]
Let $p$ be a rational prime; tensoring with $\Q_p$, the map $f$ induces a map $f_p:\Q_p[G]\rightarrow \Q_p[G]$ of $\Q_p$-vector spaces with the same determinant. Denoting by $\mathfrak{A}_{L/\Q,p}:=\mathfrak{A}_{L/\Q}\otimes_\Z\Z_p$, note that $f_p(\mathfrak{A}_{L/\Q,p})=\Z_p[G]$. By \eqref{eq:ind-det}, we deduce that 
$
 ([\mathfrak{A}_{L/\Q,p}:\Z_p[G]])=(\det f)
$
as ideals in $\Z_p$, which implies
\[
 v_p(m(L/\Q))=v_p([\mathfrak{A}_{L/\Q,p}:\Z_p[G]]).
\]
We now study the $\Z_p$-order $\mathfrak{A}_{L/\Q,p}$. Let $\mathfrak{P}$ be a prime of $L$ above $p$ and $D$ be its decomposition group in $G$. Note that $\mathfrak{A}_{L/\Q,p}$ can be seen as the associated order of 
\[
\mathcal{O}_{L,p}=\mathcal{O}_L\otimes_\Z\Z_p\cong \mathcal{O}_{L_\mathfrak{P}}\otimes_{\Z_p[D]}\Z_p[G]
\]
in $\Q_p[G]$, in the sense of Proposition \ref{associatedfree}. Then by \cite[Equation (2)]{MR747998} we have the relation
\[
 \mathfrak{A}_{L/\Q,p}=\bigcap_{g\in G}g\left(\mathfrak{A}_{L_\mathfrak{P}/\Q_p}\otimes_{\Z_p[D]}\Z_p[G]\right)g^{-1},
\]
and since $G$ is abelian this simply means
$
 \mathfrak{A}_{L/\Q,p}=\mathfrak{A}_{L_\mathfrak{P}/\Q_p}\otimes_{\Z_p[D]}\Z_p[G].
$
As $\Z_p[G]$ is free of rank $[G:D]=[L:\Q]/\left(e(\mathfrak{P}|p)f(\mathfrak{P}|p)\right)$ over $\Z_p[D]$, then we find that
\[
 [\mathfrak{A}_{L/\Q,p}:\Z_p[G]]=[\mathfrak{A}_{L_\mathfrak{P}/\Q_p}\otimes_{\Z_p[D]}\Z_p[G]:\Z_p[G]]=[\mathfrak{A}_{L_\mathfrak{P}/\Q_p}:\Z_p[D]]^{[G:D]}.
\]
The statement follows.
\end{proof}
From Corollary \ref{cor:AbsolutelyAbelianCase} we immediately deduce the following.
\begin{corollary}
 Let $L/\Q$ be a finite abelian extension and $p$ an odd prime. Let $p^nd$ be the ramification index of any prime of $L$ above $p$, with $(d,p)=1$. Then
 \[
  v_p(m(L/\Q))=\frac{[L:\Q](p^n-1)}{p^n(p-1)}.
 \]
\end{corollary}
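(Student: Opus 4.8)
The plan is to combine the previous corollary (\ref{cor:AbsolutelyAbelianCase}) with the local-global principle of Theorem \ref{thm:GlobalCase}. Fix an odd prime $p$ and a prime $\mathfrak{P}$ of $L$ above $p$. Since $L/\Q$ is abelian, the completion $L_\mathfrak{P}/\Q_p$ is an abelian extension of $p$-adic fields, and in particular it is absolutely abelian; moreover $\Q_p$ is trivially unramified over itself, so the hypotheses of Corollary \ref{cor:AbsolutelyAbelianCase} are met with $K=\Q_p$. By assumption the ramification index of $\mathfrak{P}$ over $p$ is $p^n d$ with $(d,p)=1$, and this is precisely $e_{L_\mathfrak{P}/\Q_p}$; writing $f$ for the residue degree $f(\mathfrak{P}\mid p)=f_{L_\mathfrak{P}}$, Corollary \ref{cor:AbsolutelyAbelianCase} gives
\[
v_p(m(L_\mathfrak{P}/\Q_p)) = \frac{f\, d\,(p^n-1)}{p-1}.
\]

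Now I would feed this into Theorem \ref{thm:GlobalCase}. That theorem states that for every prime $\mathfrak{P}\mid p$,
\[
v_p(m(L/\Q)) = \frac{[L:\Q]}{e(\mathfrak{P}\mid p)\, f(\mathfrak{P}\mid p)}\, v_p(m(L_\mathfrak{P}/\Q_p)) = \frac{[L:\Q]}{p^n d \cdot f}\cdot \frac{f\, d\,(p^n-1)}{p-1}.
\]
The factors $d$ and $f$ cancel, leaving
\[
v_p(m(L/\Q)) = \frac{[L:\Q]\,(p^n-1)}{p^n(p-1)},
\]
which is exactly the claimed formula. There is nothing left to prove, since every ingredient has already been established in the excerpt; the only point worth a sentence is the consistency check, namely that the right-hand side of Theorem \ref{thm:GlobalCase} is independent of the chosen $\mathfrak{P}$ — this is automatic here because the ramification index $p^n d$ of a prime of an abelian extension of $\Q$ depends only on $p$, not on the choice of $\mathfrak{P}$ above $p$, and likewise the computation above shows the final answer only involves $n$.

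Since this is a short deduction, there is no real obstacle: the work was done in Corollary \ref{cor:AbsolutelyAbelianCase} and Theorem \ref{thm:GlobalCase}. If I wanted to be careful, the one thing to double-check is that $L_\mathfrak{P}$ is genuinely absolutely abelian (so that Corollary \ref{cor:AbsolutelyAbelianCase} applies with $K=\Q_p$): this holds because the decomposition group of $\mathfrak{P}$ in $\gal(L/\Q)$ is a subgroup of an abelian group, hence abelian, and $L_\mathfrak{P}$ is the fixed field data of precisely this decomposition group completed, so $\gal(L_\mathfrak{P}/\Q_p)$ is abelian. Everything else is a one-line algebraic simplification.
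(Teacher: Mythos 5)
Your proposal is correct and is exactly the deduction the paper intends: the corollary is stated as an immediate consequence of Theorem \ref{thm:GlobalCase} combined with Corollary \ref{cor:AbsolutelyAbelianCase} applied to $L_\mathfrak{P}/\Q_p$ with $K=\Q_p$, and your cancellation of $d$ and $f$ reproduces the stated formula. Your side remarks (that $\gal(L_\mathfrak{P}/\Q_p)$ is abelian and that the answer is independent of the choice of $\mathfrak{P}$ above $p$) are accurate and harmless additions.
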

\begin{remark}
 Let $\mathfrak{f}$ be the conductor of $L$. Then, since $\Q(\zeta_{\mathfrak{f}})/L$ is tamely ramified above odd primes (as follows for example from \cite[Theorem 8.2]{MR2078267}), in the notation of the above corollary we have that $v_p(\mathfrak{f})=n+1$ for all ramified primes $p$. In unpublished notes, Henri Johnston already established, by different methods, a similar formula relating $v_p(m(L/\Q))$ with the degree and conductor of an abelian extension.
\end{remark}

\section{Cyclic extensions of degree $p$}
\label{sec:degp}
\subsection{Preliminaries}
In this section we need the following notation. Let $L/K$ be a Galois extension of degree $p$ of a $p$-adic fields.  Let $G=\operatorname{Gal}(L/K)$, and let $\sigma$ be a generator of $G$. 
 We assume that $L/K$ is ramified with ramification jump $t$; we denote by $a \in \{0,\ldots,p-1\}$ the residue class modulo $p$ of $t$ and write $t=pt_0+a$. To simplify the notation, we will also denote simply by $e$ (instead of $e_K$) the absolute ramification index of $K$.
\begin{remark}\label{rmk:EGeqA}
Proposition \ref{prop:InequalitiesOneAndt} implies
$
\frac{p-1}p t=\frac{p-1}p a +(p-1)t_0\le e,
$
so we have the inequality
\[
e \geq \left\lceil \frac{p-1}p a\right\rceil +(p-1)t_0=a +(p-1)t_0
\]
since $p>a$.  Moreover, if $L/K$ is assumed \textit{not} to be almost-maximally ramified, then equality cannot hold and we have $e \geq a+(p-1)t_0+1$.
Recall that $L/K$ is maximally ramified if $t=\frac{ep}{p-1}$, and this corresponds to the case $a=0$.
\end{remark}
We will repeatedly need the following well-known lemma.
\begin{lemma}\label{lemma:DifferentValuations}
Suppose $L/K$ is totally ramified. Then any set of elements $\alpha_0, \ldots, \alpha_{p-1} \in \mathcal{O}_L$ such that $v_L(\alpha_i)=i$ forms a basis of $\mathcal{O}_L$ as a free $\mathcal{O}_K$-module.
Similarly, any set of elements $\beta_0,\ldots,\beta_{p-1} \in L$ such that $v_L(\beta_i) \equiv i \pmod{p}$ form a basis of $L$ as a $K$-vector space.
\end{lemma}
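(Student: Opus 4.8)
The plan is to reduce everything to the elementary fact that, in an ultrametric setting, a sum of elements with pairwise distinct valuations has valuation equal to the minimum of those valuations, and in particular is nonzero unless it is empty. I will treat the two assertions together, using that $v_L$ restricted to $K$ equals $e_{L/K}v_K = p\cdot v_K$ since $L/K$ is totally ramified of degree $p$, and that $\dim_K L = p$ while $\mathcal{O}_L$ is free of rank $p$ over the DVR $\mathcal{O}_K$.

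First I would establish linear independence over $K$. Given a relation $\sum_{i=0}^{p-1} c_i\beta_i = 0$ with $c_i \in K$, for each index $i$ with $c_i \neq 0$ one has $v_L(c_i\beta_i) = p\,v_K(c_i) + v_L(\beta_i)$; since $v_L(\beta_i) \equiv i \pmod p$, these valuations are pairwise distinct, because an equality $p\,v_K(c_i) + v_L(\beta_i) = p\,v_K(c_j) + v_L(\beta_j)$ would force $i \equiv j \pmod p$, hence $i = j$. Thus the valuation of the sum is finite, contradicting that it vanishes unless all $c_i = 0$. As $\dim_K L = p$, the $\beta_i$ (and a fortiori the $\alpha_i$) form a $K$-basis of $L$, which proves the second statement.

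For the first statement it remains to check that the coordinates of an arbitrary $x \in \mathcal{O}_L$ with respect to the basis $\alpha_0,\ldots,\alpha_{p-1}$ already lie in $\mathcal{O}_K$. Writing $x = \sum_i c_i\alpha_i$ with $c_i \in K$, the same computation shows that the nonzero terms $c_i\alpha_i$ have pairwise distinct valuations $p\,v_K(c_i) + i$, so $v_L(x) = \min_{c_i \neq 0}\big(p\,v_K(c_i) + i\big)$. If some nonzero $c_i$ had $v_K(c_i) < 0$, then $p\,v_K(c_i) + i \leq -p + (p-1) < 0$, whence $v_L(x) < 0$, contradicting $x \in \mathcal{O}_L$. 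Therefore every $c_i$ lies in $\mathcal{O}_K$, so $\mathcal{O}_L = \bigoplus_{i=0}^{p-1}\mathcal{O}_K\alpha_i$, and combined with the linear independence already proved this exhibits $\alpha_0,\ldots,\alpha_{p-1}$ as an $\mathcal{O}_K$-basis.

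The only point requiring care — the ``main obstacle'', such as it is — is the observation that the exponents $p\,v_K(c_i) + i$ for $0 \le i \le p-1$ are genuinely pairwise distinct, not merely distinct modulo $p$; this is what upgrades the ultrametric inequality to an equality and drives both arguments, and it is immediate from the range $0 \le i \le p-1$ together with the fact that the remaining part of each exponent is divisible by $p$.
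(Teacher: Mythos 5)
Your proof is correct and complete: the pairwise-distinctness of the valuations $p\,v_K(c_i)+v_L(\alpha_i)$ (distinct because they are distinct modulo $p$ and differ by multiples of $p$ otherwise) gives both the linear independence over $K$ and, via $v_L(x)=\min_i\bigl(p\,v_K(c_i)+i\bigr)\geq 0$, the integrality of the coordinates, which is exactly what is needed. The paper states this lemma without proof, citing it as well-known, and your argument is precisely the standard one that is being invoked, so there is nothing to add or correct.
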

The case $a=0$ is special, and we handle it first. This case is well studied in the literature (see e.g.~\cite[Proposition 3]{MR296047}), but our approach seems to be new.
\begin{proposition}\label{prop:aeq0}
Suppose that $L/K$ is ramified and $a=0$. Then $L=K(\pi_K^{1/p})$ for a suitable uniformiser $\pi_K$ of $K$, we have $m(L/K)=p^{\frac{1}{2}[L:\mathbb{Q}_p]}$,  and $\omega = \sum_{i=0}^{p-1} c_i \pi_K^{i/p}$ achieves the minimal index if and only if every $c_i$ is a unit in $\mathcal{O}_K$.
\end{proposition}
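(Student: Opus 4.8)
The plan is to work throughout in the Kummer basis of $L/K$ and reduce the computation of every index $[\mathcal{O}_L:\mathcal{O}_K[G]\omega]$ to the valuation of a Vandermonde-type determinant. First I would pin down the shape of $L$: since $a=0$ forces $p\mid t$, Proposition~\ref{prop:InequalitiesOneAndt}(2) gives $t=\frac{ep}{p-1}$, $\zeta_p\in K$, and $L=K(\pi_K^{1/p})$ for a suitable uniformiser $\pi_K$. Put $\pi_L:=\pi_K^{1/p}$; as $L/K$ is totally ramified of degree $p$ this is a uniformiser of $L$, so by Lemma~\ref{lemma:DifferentValuations} the set $\{1,\pi_L,\dots,\pi_L^{p-1}\}$ is an $\mathcal{O}_K$-basis of $\mathcal{O}_L$. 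Fixing a generator $\sigma$ of $G$, we have $\sigma(\pi_L)=\zeta\pi_L$ for some $p$-th root of unity $\zeta$, which must be primitive (otherwise $\sigma$ would fix $\pi_L$, hence all of $L$); call it $\zeta_p$, so $\sigma^j(\pi_L)=\zeta_p^{\,j}\pi_L$.

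Next I would compute the index for an arbitrary $\omega=\sum_{i=0}^{p-1}c_i\pi_L^i$ with $c_i\in\mathcal{O}_K$. From $\sigma^j(\omega)=\sum_i c_i\zeta_p^{\,ij}\pi_L^i$, the matrix whose columns give the coordinates of $\omega,\sigma\omega,\dots,\sigma^{p-1}\omega$ in the basis $\{\pi_L^i\}$ factors as $M=\mathrm{diag}(c_0,\dots,c_{p-1})\cdot V$, where $V=(\zeta_p^{\,ij})_{0\le i,j\le p-1}$ is the Vandermonde matrix on the nodes $1,\zeta_p,\dots,\zeta_p^{p-1}$. By \eqref{eq:ind-det} (cf.\ \eqref{indici}) we get $[\mathcal{O}_L:\mathcal{O}_K[G]\omega]=p^{f_Kv_K(\det M)}$ whenever $\det M\neq0$ (and the index is infinite otherwise), with $v_K(\det M)=\sum_i v_K(c_i)+v_K(\det V)$. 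Since $\det V\neq0$ is fixed, $\omega$ is a normal basis generator iff all $c_i\neq 0$, the minimum of $\sum_i v_K(c_i)$ over such $\omega$ is $0$ and is attained precisely when every $c_i$ is a unit (e.g.\ $\omega=\sum_i\pi_L^i$), and therefore $m(L/K)=p^{f_Kv_K(\det V)}$ with the stated characterisation of the minimising elements.

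It remains to evaluate $v_K(\det V)=\sum_{0\le i<j\le p-1}v_K(\zeta_p^{\,j}-\zeta_p^{\,i})$. Each factor equals $\zeta_p^{\,i}(\zeta_p^{\,j-i}-1)$ with $1\le j-i\le p-1$; using the standard fact that $\zeta_p^{m}-1$ and $\zeta_p-1$ are associates in $\mathcal{O}_K$ for $p\nmid m$, together with $\prod_{m=1}^{p-1}(\zeta_p^{m}-1)=\pm\Phi_p(1)=\pm p$, every factor has $v_K$-valuation $\frac{1}{p-1}v_K(p)=\frac{e}{p-1}$. As there are $\binom{p}{2}$ of them, $v_K(\det V)=\frac{p(p-1)}{2}\cdot\frac{e}{p-1}=\frac{pe}{2}$. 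Finally, $L/K$ totally ramified of degree $p$ gives $e_L=pe$ and $f_L=f_K$, so $f_Kv_K(\det V)=\tfrac12 e_Lf_L=\tfrac12[L:\mathbb{Q}_p]$, whence $m(L/K)=p^{\frac12[L:\mathbb{Q}_p]}$.

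Once the Kummer basis is in place the argument is essentially a bookkeeping exercise, so I do not expect a genuine obstacle; the only steps that require care are the factorisation $M=\mathrm{diag}(c_i)\,V$ (so that the contribution of the $c_i$ cleanly separates from the constant $v_K(\det V)$) and the tracking of the valuations of differences of $p$-th roots of unity, which is the natural place for sign or off-by-one slips. It may also be worth recording the explicit minimiser $\omega=\sum_{i=0}^{p-1}\pi_K^{i/p}$, as it is used later.
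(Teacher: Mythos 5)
Your proposal is correct and follows essentially the same route as the paper: reduce to the Kummer basis $\{\pi_K^{i/p}\}$, observe that the coordinate matrix of the conjugates of $\omega$ factors as $\mathrm{diag}(c_i)$ times the Vandermonde matrix in the $p$-th roots of unity, and read off the index from the determinant, so that the minimum is attained exactly when all $c_i$ are units. The only cosmetic difference is that you evaluate $v_K(\det V)$ by computing the valuation $\frac{e}{p-1}$ of each difference $\zeta_p^{j}-\zeta_p^{i}$ directly, while the paper invokes $\operatorname{disc}(x^p-1)=\pm p^p$; both give $N(p\mathcal{O}_K)^{p/2}=p^{\frac12[L:\mathbb{Q}_p]}$.
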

\begin{proof}
By Proposition \ref{prop:InequalitiesOneAndt} we have $t=\frac{ep}{p-1}$, the extension $L/K$ is maximally ramified, $K$ contains the $p$-th roots of unity, and $L=K(\pi_K^{1/p})$ for a suitable choice of uniformiser $\pi_K$. Letting $\pi_L := \pi_K^{1/p}$, Lemma \ref{lemma:DifferentValuations} ensures that $1,\pi_L,\ldots,\pi_L^{p-1}$ is an $\mathcal{O}_K$-basis of $\mathcal{O}_L$. Let $\zeta_p = \sigma(\pi_L)/\pi_L$; it is a $p$-th root of unity.
Writing an arbitrary element of $\mathcal{O}_L$ as $\omega = \sum_{i=0}^{p-1} c_i \pi_L^i$ (with $c_i \in \mathcal{O}_K$), the Galois conjugates of $\omega$ are the elements 
$
\omega_j := \sigma^j(\omega)= \sum_{i=0}^{p-1} c_i \zeta_p^{ij} \pi_L^i
$,
so the index of $\mathcal{O}_K[G] \cdot \omega$ in $\mathcal{O}_L$ is the norm of the determinant of the matrix $M := \left( c_i \zeta_p^{ij} \right)_{i,j=0,\ldots,p-1}$. We have $\det(M) = \prod_{i=0}^{p-1} c_i \cdot \det \left(\zeta_p^{ij} \right)_{i,j=0,\ldots,p-1}$, and the latter is a Vandermonde determinant, so
\[
[\mathcal{O}_L : \mathcal{O}_K[G] \cdot \omega] = N(\det M) =  N\left( \prod_{i=0}^{p-1} c_i \right) \cdot \prod_{0 \leq i<j \leq p-1} N(\zeta_p^{j}-\zeta_p^i) =  N\left( \prod_{i=0}^{p-1} c_i \right) \cdot N(p \mathcal{O}_K)^{p/2},
\]
where the last equality follows from the well-known fact that $\operatorname{disc}(x^p-1)=\pm p^p$.
The minimum of $[\mathcal{O}_L : \mathcal{O}_K[G] \cdot \omega]$ is then achieved whenever $\prod_{i=0}^{p-1} c_i$ is a unit in $\mathcal{O}_K$, and is equal to $N(p\mathcal{O}_K)^{p/2}$. Notice that $\zeta_p \in K$ ensures that $p-1 \mid e$, hence $N(p\mathcal{O}_K)^{p/2}=p^{ef_Kp/2}$ is an integer. The statement follows from the fact that $ef_Kp = [K:\mathbb{Q}_p][L:K]=[L:\mathbb{Q}_p]$.
\end{proof}
\begin{remark}\label{rmk:KummerDegreepn}
An immediate extension of the argument in the previous proof shows that when $K$ contains the $p^r$-th roots of unity and $L=K(\sqrt[p^r]{\pi_K})$ we have 
\[
v_p(m(L/K))=\frac{1}{2} [K:\mathbb{Q}_p] v_p (\operatorname{disc}(x^{p^r}-1)) = \frac{1}{2} [K:\mathbb{Q}_p] rp^r = \frac{1}{2}r [L:\mathbb{Q}_p].
\]
\end{remark}
\begin{corollary}\label{cor:AEqZeroOLIsAFree}
Suppose that $L/K$ is ramified and $a=0$, so we can write $L=K(\pi_K^{1/p})$. Then $\mathcal{O}_L$ is free over $\mathfrak{A}_{L/K}$, and an $\mathcal{O}_K$-basis of the latter is given by $\pi_K^{-ei/(p-1)}(\sigma-1)^i$ for $i=0,\ldots,p-1$.  Moreover, $\omega=\sum_{i=0}^{p-1} c_i \pi_K^{i/p}$ generates $\mathcal{O}_L$ over $\mathfrak{A}_{L/K}$ if and only if every $c_i$ is a unit in $\mathcal{O}_K$.
\end{corollary}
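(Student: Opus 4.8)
The strategy is to pin down $\mathfrak{A}_{L/K}$ exactly by working in the $K$-basis $1,(\sigma-1),\dots,(\sigma-1)^{p-1}$ of $K[G]$, and then to read off freeness and the description of the optimal generators from the numerical data already recorded in Proposition~\ref{prop:aeq0} together with Proposition~\ref{productindex}. I keep the notation of Proposition~\ref{prop:aeq0}: we have $L=K(\pi_L)$ with $\pi_L=\pi_K^{1/p}$, the ratio $\zeta_p=\sigma(\pi_L)/\pi_L$ is a primitive $p$-th root of unity lying in $K$, and $1,\pi_L,\dots,\pi_L^{p-1}$ is an $\mathcal{O}_K$-basis of $\mathcal{O}_L$. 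Put $\mu=\zeta_p-1$. Since $\zeta_p\in K$ one has $p-1\mid e$ and $v_K(\mu)=e/(p-1)$, so $\pi_K^{e/(p-1)}$ and $\mu$ are associates in $\mathcal{O}_K$; consequently the lattice in the statement,
\[
\Lambda:=\bigoplus_{i=0}^{p-1}\mathcal{O}_K\,\pi_K^{-ei/(p-1)}(\sigma-1)^i,
\]
coincides with $\bigoplus_{i=0}^{p-1}\mathcal{O}_K\,\mu^{-i}(\sigma-1)^i$, and since $\{(\sigma-1)^i\}_i$ is an $\mathcal{O}_K$-basis of $\mathcal{O}_K[G]$ we have $\mathcal{O}_K[G]\subseteq\Lambda$. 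The goal is to prove $\Lambda=\mathfrak{A}_{L/K}$.

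First I would verify the inclusion $\Lambda\subseteq\mathfrak{A}_{L/K}$. As $\sigma$ acts on the line $K\pi_L^k$ by the scalar $\zeta_p^k$, we get $(\sigma-1)^i\pi_L^k=(\zeta_p^k-1)^i\pi_L^k$. For $1\le k\le p-1$ the element $\zeta_p^k-1$ is an associate of $\mu$ in $\mathcal{O}_K$ (being another uniformiser of $\mathbb{Q}_p(\zeta_p)$), so $v_L\bigl(\mu^{-i}(\zeta_p^k-1)^i\pi_L^k\bigr)=k\ge0$; for $k=0$ the operator $(\sigma-1)^i$ kills $\pi_L^0=1$ when $i\ge1$ and fixes it when $i=0$. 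Since the $\pi_L^k$ generate $\mathcal{O}_L$ over $\mathcal{O}_K$, this shows $\mu^{-i}(\sigma-1)^i\,\mathcal{O}_L\subseteq\mathcal{O}_L$ for every $i$, hence $\Lambda\subseteq\mathfrak{A}_{L/K}$.

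Next comes an index count. Because $\{(\sigma-1)^i\}_i$ and $\{\mu^{-i}(\sigma-1)^i\}_i$ are $\mathcal{O}_K$-bases of $\mathcal{O}_K[G]$ and of $\Lambda$ respectively, the module index is $[\Lambda:\mathcal{O}_K[G]]_{\mathcal{O}_K}=\mu^{p(p-1)/2}\mathcal{O}_K=\pi_K^{ep/2}\mathcal{O}_K$, so that the subgroup index $[\Lambda:\mathcal{O}_K[G]]$ equals $p^{f_Kep/2}=p^{[L:\mathbb{Q}_p]/2}$. On the other hand Proposition~\ref{productindex} gives $[\mathfrak{A}_{L/K}:\mathcal{O}_K[G]]\le m(L/K)$, and $m(L/K)=p^{[L:\mathbb{Q}_p]/2}$ by Proposition~\ref{prop:aeq0}. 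Combining this with the inclusion of the previous step,
\[
p^{[L:\mathbb{Q}_p]/2}=[\Lambda:\mathcal{O}_K[G]]\le[\mathfrak{A}_{L/K}:\mathcal{O}_K[G]]\le m(L/K)=p^{[L:\mathbb{Q}_p]/2},
\]
so equality holds throughout. Hence $\Lambda=\mathfrak{A}_{L/K}$ (it sits inside $\mathfrak{A}_{L/K}$ with the same finite index), which yields the claimed basis; moreover $m(L/K)=[\mathfrak{A}_{L/K}:\mathcal{O}_K[G]]$, which by Proposition~\ref{productindex} is precisely the statement that $\mathcal{O}_L$ is free over $\mathfrak{A}_{L/K}$. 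Finally, the last part of Proposition~\ref{productindex} says that $\omega$ generates $\mathcal{O}_L$ over $\mathfrak{A}_{L/K}$ exactly when it realises $m(L/K)$, and by Proposition~\ref{prop:aeq0} the elements $\omega=\sum_i c_i\pi_K^{i/p}$ realising $m(L/K)$ are exactly those with every $c_i\in\mathcal{O}_K^\times$.

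The argument has no serious obstacle: the one place requiring care is the valuation bookkeeping in the inclusion $\Lambda\subseteq\mathfrak{A}_{L/K}$, where it matters that $\zeta_p^k-1$ is a genuine associate of $\zeta_p-1$ in $\mathcal{O}_K$ for $1\le k\le p-1$ (a standard cyclotomic fact), while the organising idea is to avoid computing $\mathfrak{A}_{L/K}$ directly and instead squeeze it between $\Lambda$ and $m(L/K)$ via Propositions~\ref{prop:aeq0} and~\ref{productindex}.
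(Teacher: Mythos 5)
Your proposal is correct and follows essentially the same route as the paper: show the lattice $\bigoplus_i\mathcal{O}_K\,\pi_K^{-ei/(p-1)}(\sigma-1)^i$ lands inside $\mathfrak{A}_{L/K}$ by evaluating on the basis $\pi_L^j$, compute its index $p^{[L:\mathbb{Q}_p]/2}$ over $\mathcal{O}_K[G]$, and squeeze via Proposition~\ref{productindex} and Proposition~\ref{prop:aeq0} to get both the identification of the associated order and freeness, with the description of generators inherited from the minimal elements in Proposition~\ref{prop:aeq0}. The only (cosmetic) difference is your explicit use of the fact that $\zeta_p^k-1$ is an associate of $\zeta_p-1$, which is slightly cleaner bookkeeping than the paper's valuation count.
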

\begin{proof}
Let $\pi_L=\pi_K^{1/p}$. We know that $1,\pi_L,\ldots,\pi_L^{p-1}$ is an $\mathcal{O}_K$-basis of $\mathcal{O}_L$. If $j\geq 1$, then we have $\pi_K^{-ei/(p-1)}(\sigma-1)^i \pi_L^j = \pi_K^{-ei/(p-1)} (\zeta_p-1)^{ij} \pi_L^j$, which has $L$-valuation $-e p i /(p-1) + \frac{ijep}{p-1}+j \geq 0$, so all the elements $\pi_K^{-ei/(p-1)}(\sigma-1)^i$ do in fact belong to $\mathfrak{A}_{L/K}$. Set $\mathfrak{B}=\bigoplus \mathcal{O}_K \pi_K^{-ei/(p-1)}(\sigma-1)^i$: it is a sub-$\mathcal{O}_K$-lattice of $\mathfrak{A}_{L/K}$ containing $\mathcal{O}_K[G]$.
The index $[\mathfrak{B}:\mathcal{O}_K[G]]$ is $N(\prod_{i=0}^{p-1} \pi_K^{ei/(p-1)}) = N( \pi_K^{\frac{1}{2} p e} ) = p^{\frac{1}{2}[L:\mathbb{Q}_p]}$, which by Proposition \ref{prop:aeq0} is also $m(L/K)$. Proposition \ref{productindex} then yields
\[
[\mathfrak{B}: \mathcal{O}_K[G]] = m(L/K) = [\mathfrak{A}_{L/K} : \mathfrak{B}] \cdot [\mathfrak{B}: \mathcal{O}_K[G]] \cdot \min_{\omega \in \mathcal{O}_L} [\mathfrak{A}_{L/K} \omega : \mathcal{O}_L],
\]
which implies that $\mathfrak{B}$ coincides with $\mathfrak{A}_{L/K}$ and that $\mathcal{O}_L$ is free over $\mathfrak{A}_{L/K}$. The last part of the statement follows from the analogous description of the elements that achieve the minimal index in Proposition \ref{prop:aeq0}.
\end{proof}
\begin{remark}\label{rmk:greither}
In the setting of Proposition~\ref{prop:aeq0} the associated order is maximal: this is well-known, but it can also be deduced from our results. In fact,  $m(L/K)$ is equal to the index of $\mathcal{O}_K[G]$ in the maximal order $\mathfrak{M}$ (see Equation \eqref{formula-max-abeliano}); on the other hand,  since $\mathcal{O}_L$ is free over the associated order by Corollary \ref{cor:AEqZeroOLIsAFree}, Proposition~\ref{productindex} shows that $m(L/K)=[\mathfrak{A}_{L/K}:\mathcal{O}_L]$, so $\mathfrak{A}_{L/K}=\mathfrak{M}$.

Proposition \ref{productindex} also shows that the elements realising the minimal index $m(L/K)$ are obtained as a fixed generator times a unit of $\mathfrak{A}_{L/K}=\mathfrak{M}$. More precisely we have $\mathfrak{M} \cong \mathcal{O}_K^p$, with the isomorphism given by a basis of orthogonal idempotents, and the action of $(u_0,\ldots,u_{p-1}) \in (\mathcal{O}_K^\times)^p$ on a generator $\alpha_0=c_0+c_1\pi_L+\cdots + c_{p-1}\pi_L^{p-1}$ sends it to $\sum_{i=0}^{p-1} u_i c_i \pi_L^i$, thus recovering in another way the description of all minimal generators given in Proposition \ref{prop:aeq0}.
Finally, $\omega_1$ and $\omega_2$ generate  the same  $\mathcal{O}_K[G]$-module if and only if $\omega_1/\omega_2$
is a unit of  $\mathcal{O}_K[G]$.  As a consequence,  the set of Galois modules $M_\omega: = \mathcal{O}_K[G] \omega$ realising the minimal index  is potentially very large, because $\mathfrak{M}^*$ is much bigger than $\mathcal{O}_K[G]^*$ in general: indeed, the quotient $\mathfrak{M}^*/\mathcal{O}_K[G]^*$ is finite, but (usually) nontrivial since the torsion subgroup of $\mathfrak{M}^*$ is isomorphic to $(\mathcal{O}_K^*)_{\operatorname{tors}}^p $ while $(\mathcal{O}_K[G])_{\operatorname{tors}}^*$ is isomorphic to $(\mathcal{O}_K^*)_{\operatorname{tors}}\times G$ (see \cite[Corollary 2.2]{MR2138721}).
\end{remark}
In all that follows it will be useful to work with a special element $f$ of the group ring, which we now define and whose properties we describe next.
\begin{definition}
We let $f := \sigma-1 \in \mathcal{O}_K[G]$.
\end{definition}
The following proposition summarises the key properties of the action of $f$.
\begin{proposition}\label{prop:PropertiesOff}
Suppose $L/K$ is totally ramified. The following hold:
\begin{enumerate}
\item the powers $1, f, f^2, \ldots, f^{p-1}$ of $f$ form an $\mathcal{O}_K$-basis of $\mathcal{O}_K[G]$;
\item the equality
$
f^p = - \sum_{j=1}^{p-1} {p \choose j} f^j
$
holds.
\end{enumerate}
Suppose in addition $a \neq 0$. Then:
\begin{enumerate}
\setcounter{enumi}{2}
\item $v_L(f^i \pi_L^a) = a+ it $ for $i=0, \ldots, p-1$;
\item the element $\pi_L^a$ generates a normal basis for $L/K$. Explicitly, $\{f^i \pi_L^a\}_{i=0,\ldots,p-1}$ is a $K$-basis of $L$;
\item we have $v_L(f^p  \pi_L^a) = ep+t+a$;
\item for every $x \in \mathcal{O}_L$ we have $v_L(f  x) \geq v_L(x) + t$.
\end{enumerate}
\end{proposition}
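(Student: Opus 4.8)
The plan is to dispatch (1) and (2) as formal identities in the group ring, and then to reduce everything in the totally ramified, $a\neq 0$ case to a single sharp valuation estimate for $f$, of which (6) is the rough form. For (1), observe that $1,\sigma,\dots,\sigma^{p-1}$ is the obvious $\mathcal O_K$-basis of $\mathcal O_K[G]$ and that passing to $1,f,\dots,f^{p-1}$ (with $f=\sigma-1$) is a unipotent integral triangular change of basis, hence invertible over $\mathcal O_K$; thus $1,f,\dots,f^{p-1}$ is again an $\mathcal O_K$-basis. For (2), expand $0=\sigma^p-1=(1+f)^p-1=\sum_{j=1}^p\binom pj f^j$ and solve for $f^p$ (here $\sigma^p=1$, since a ramified extension of prime degree $p$ is automatically totally ramified).

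The heart of the matter is a refinement of (6) that I would isolate and prove first: \emph{for $y\in L^\times$ with $p\nmid v_L(y)$ one has $v_L(fy)=v_L(y)+t$, while $v_L(fy)\ge v_L(y)+t$ for every $y\in\mathcal O_L$.} Since $\sigma\in G_t\setminus G_{t+1}$ and $\mathcal O_L=\mathcal O_K[\pi_L]$, one gets $v_L(\sigma(\pi_L)-\pi_L)=t+1$, so $\sigma(\pi_L)=\pi_L(1+w)$ with $v_L(w)=t$, and also $v_L((\sigma-1)u)\ge t+1$ for all $u\in\mathcal O_L$. Writing $y=u\pi_L^m$ with $u\in\mathcal O_L^\times$ yields $fy=y\bigl(\tfrac{\sigma(u)}{u}(1+w)^m-1\bigr)$; expanding $(1+w)^m$ by the ($p$-adically convergent) binomial series, the linear term $mw$ has valuation exactly $t$ when $p\nmid m$ and dominates all other contributions, while for $p\mid m$ every contribution has valuation $>t$. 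I expect this to be the one step needing genuine care — verifying that the term of valuation exactly $t$ really dominates and is not cancelled — and it is precisely where the hypothesis $p\nmid v_L(y)$ is used essentially. This immediately gives (6).

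Granting the estimate, the remaining parts are bookkeeping. For (3) I induct on $i$, starting from $v_L(\pi_L^a)=a$ and using that $a+it\equiv a(i+1)\not\equiv 0\pmod p$ for $0\le i\le p-2$ (as $\gcd(a,p)=\gcd(i+1,p)=1$), so the estimate promotes $v_L(f^i\pi_L^a)=a+it$ to $v_L(f^{i+1}\pi_L^a)=a+(i+1)t$. For (4): by (3) the valuations $a+it$, $0\le i\le p-1$, run through all residues mod $p$, so $\{f^i\pi_L^a\}_{i=0}^{p-1}$ is a $K$-basis of $L$ by Lemma~\ref{lemma:DifferentValuations}; since (1) makes $1,f,\dots,f^{p-1}$ a $K$-basis of $K[G]$, this is the same as saying $K[G]\pi_L^a=L$, i.e.\ $\pi_L^a$ is a normal basis generator. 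For (5) I apply (2) to $\pi_L^a$, obtaining $f^p\pi_L^a=-\sum_{j=1}^{p-1}\binom pj f^j\pi_L^a$; since $v_p\binom pj=1$ (so $v_L\binom pj=ep$) and $v_L(f^j\pi_L^a)=a+jt$ by (3), the $j=1$ summand has valuation $ep+a+t$ and, as $t>0$, is the unique term of smallest valuation, whence $v_L(f^p\pi_L^a)=ep+t+a$.

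Finally, the elementary identity $a+it\equiv a(i+1)\pmod p$ is exactly what makes the hypothesis $a\neq 0$ indispensable: if $a=0$ all the $f^i\pi_L^a$ share the residue $0$ mod $p$, the basis argument in (4) collapses, and the induction in (3) stalls at the first step — which is why the case $a=0$ must be handled separately, as in Proposition~\ref{prop:aeq0}.
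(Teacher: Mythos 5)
Your proposal is correct, and parts (1), (2), (4), (5) are argued exactly as in the paper (unitriangular change of basis; expansion of $(1+f)^p$; distinct valuations mod $p$ plus Lemma \ref{lemma:DifferentValuations}; the $j=1$ term dominating in $f^p\pi_L^a=-\sum_j\binom{p}{j}f^j\pi_L^a$). Where you genuinely diverge is in isolating the refined lemma ``$v_L(fy)=v_L(y)+t$ whenever $p\nmid v_L(y)$, and $v_L(fy)\geq v_L(y)+t$ for all $y\in\mathcal{O}_L$'', proved in one stroke by writing $y=u\pi_L^m$, $\sigma(\pi_L)=\pi_L(1+w)$ with $v_L(w)=t$, and checking that the linear term $mw$ dominates the higher binomial terms (valuation $\geq 2t$) and the unit correction $\sigma(u)/u-1$ (valuation $\geq t+1$, by $\sigma\in G_t$); this works since $a\neq 0$ forces $t\geq 1$. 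The paper instead proves (3) by the same underlying computation but organized as an explicit induction on $f^i\pi_L^a=\pi_L^{a+it}v_i$ with units $v_i$, and then deduces (6) quite differently: it expands $x$ in the basis $\{f^i\pi_L^a\}$ from (4) and combines (3), (5) with the bound $t\leq\frac{e_Kp}{p-1}$ of Proposition \ref{prop:InequalitiesOneAndt} to control the $i=p-1$ term. Your route buys a more uniform statement (one estimate yielding both (3) and (6)) and shows that (6) needs neither (4)–(5) nor the upper bound on $t$; the paper's route avoids the binomial-series bookkeeping for general units and exponents by reusing the already-established parts. Two cosmetic points: your parenthetical justification of $\sigma^p=1$ via total ramification is unnecessary ($G$ has order $p$), and the identity $v_L(\sigma(\pi_L)-\pi_L)=t+1$ rests on $\mathcal{O}_L=\mathcal{O}_K[\pi_L]$, which you correctly invoke and which holds since $L/K$ is totally ramified; neither affects correctness.
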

\begin{proof}
\begin{enumerate}[wide, labelwidth=!, labelindent=0pt]
\item By definition, an $\mathcal{O}_K$-basis of $\mathcal{O}_K[G]$ is given by the powers of $\sigma$. The claim then follows because the transition matrix between powers of $\sigma$ and powers of $f$ is unitriangular (that is, upper-triangular with all diagonal entries equal to $1$), hence invertible over $\mathcal{O}_K$.
\item By definition we have $\sigma^p=1$ in $\mathcal{O}_K[G]$, hence we obtain
\[
0 = \sigma^p-1=(1+f)^p-1 = \sum_{j=1}^{p-1} {p \choose j} f^j + f^p \Rightarrow f^p = - \sum_{j=1}^{p-1} {p \choose j} f^j.
\]
\item 
We begin by noticing that $t \geq 1$: indeed $t=-1$ would imply that $L/K$ is unramified, and $t=0$ would give $a=0$.
By \cite[IV.2, Proposition 5]{MR554237}, since $\sigma$ belongs to $G_t$ but not to $G_{t+1}$ we have 
\[
\frac{\sigma(\pi_L)}{\pi_L} = 1 + \pi_L^{t} u
\]
for some $u \in \mathcal{O}_L^\times$. Raising both sides to the $j$-th power, for any $j$ prime to $p$, gives
\[
\frac{\sigma(\pi_L^j)}{\pi_L^j} = 1 + \pi_L^{t} u_j
\]
where $u_j\in \mathcal{O}_L^\times$ since $(j,p)=1$. 
 Rearranging the previous equality gives
\[
\sigma(\pi_L^j)-\pi_L^j = \pi_L^{j+t} u_j,
\]
so we obtain $v_L(f \pi_L^j)=j+t$, provided that $(j,p)=1$. We now show the claim by induction on $0 \leq i \leq p-1$, the base case $i=0$ being trivial. We have
\[
f^{i+1}\pi_L^a = f \left( f^i \pi_L^a \right) = f\left(  \pi_L^{a+it} v_i \right)
\]
for some $v_i \in \mathcal{O}_L^\times$. Notice that $i<p-1$ (otherwise we would not need to prove anything for $i+1$), so $a+it \equiv a+ia \equiv a(i+1) \pmod{p}$ is nonzero. Using that $\sigma(v_i) \equiv v_i \pmod{\pi_L^{t+1}}$ (since $\sigma \in G_t$) we obtain $\sigma(v_i)=v_i+\gamma \pi_L^{t+1}$ for some $\gamma \in \mathcal{O}_L$, so we can write
\[
\begin{aligned}
f(\pi_L^{a+it} v_i) & = \sigma( \pi_L^{a+it} ) \sigma( v_i ) - \pi_L^{a+it} v_i \\
& = (\pi_L^{a+it} + \pi_L^{a+(i+1)t} u_{a+it} ) (v_i + \pi_L^{t+1} \gamma) - \pi_L^{a+it} v_i \\
& =  \pi_L^{a+(i+1)t} u_{a+it}v_i +\pi_L^{a+(i+1)t+1} \gamma + \pi_L^{a+(i+2)t+1} u_{a+it} \gamma
\end{aligned}
\]
Since $ u_{a+it}$ and $ v_i$ are units,
last expression contains only one summand of minimal valuation, namely $\pi_L^{a+(i+1)t} u_{i+1}u_i$, so the valuation of $f^{i+1}(\pi_L^a)$ is $a+(i+1)t$ as desired.

\item By part (3), the $L$-valuations of the elements $\{f^i \pi_L^a\}_{i=0,\ldots,p-1}$ are all distinct modulo $p$. The claim then follows from Lemma \ref{lemma:DifferentValuations}.

\item Follows from (2) and (3) upon noticing that $v_L\left( {p \choose j} \right)=v_L(p)=ep$ for all $j=1,\ldots,p-1$.

\item By (4) we may write every element of $\mathcal{O}_L$ as $\sum_{i=0}^{p-1} c_i f^i \pi_L^a$ with $c_i \in K$.  The claim then follows combining (3), (5) and Proposition \ref{prop:InequalitiesOneAndt}: together they show that we have $v_L (f(c_i f^i \pi_L^a)) \geq v_L (c_i f^i \pi_L^a)+t$ for all $i$.
\end{enumerate}
\end{proof}
The following quantity will be important in all that follows.
\begin{definition}\label{def:nui}
We set
$\nu_i = \left\lfloor \frac{it+a}{p} \right\rfloor$.
\end{definition}

In terms of this notation, we have very explicit descriptions of the ring of integers of $L$. %
\begin{theorem}\label{thm:StructureOfStuff}
Assume $a \neq 0$. The set $\{\pi_K^{-\nu_i} f^i \pi_L^a\}_{i=0,\ldots,p-1}$ is a $\mathcal{O}_K$-basis of $\mathcal{O}_L$.
\end{theorem}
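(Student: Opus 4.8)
The plan is to compute the $L$-adic valuation of each proposed basis element and then invoke Lemma~\ref{lemma:DifferentValuations}. Note first that $L/K$, being ramified of prime degree $p$, is automatically totally ramified, so Lemma~\ref{lemma:DifferentValuations} is available. By Proposition~\ref{prop:PropertiesOff}(3) we have $v_L(f^i \pi_L^a) = a+it$ for $i=0,\ldots,p-1$, hence
\[
v_L\!\left(\pi_K^{-\nu_i} f^i \pi_L^a\right) = a + it - p\nu_i = (a+it) - p\left\lfloor \tfrac{a+it}{p}\right\rfloor,
\]
which by definition of $\nu_i$ (Definition~\ref{def:nui}) is precisely the residue of $a+it$ modulo $p$, an integer in $\{0,\ldots,p-1\}$. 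In particular each element $\pi_K^{-\nu_i} f^i \pi_L^a$ has nonnegative valuation and therefore lies in $\mathcal{O}_L$.

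Next I would check that these $p$ residues are pairwise distinct. Since $t = pt_0 + a$, we have $a+it \equiv a(i+1) \pmod p$; as $a \neq 0$ is coprime to $p$ and $i+1$ runs through a complete residue system modulo $p$ as $i$ ranges over $0,\ldots,p-1$, the values $a(i+1) \bmod p$ form a permutation of $0,1,\ldots,p-1$. Consequently, after reordering the indices, the elements $\pi_K^{-\nu_i} f^i \pi_L^a$ have $L$-valuations exactly $0,1,\ldots,p-1$.

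Finally, Lemma~\ref{lemma:DifferentValuations} applies verbatim: a family of $p$ elements of $\mathcal{O}_L$ whose valuations are $0,1,\ldots,p-1$ is an $\mathcal{O}_K$-basis of $\mathcal{O}_L$, which is exactly the assertion of the theorem. I do not expect a genuine obstacle here; the only point requiring a moment's care is that the definition of $\nu_i$ is precisely the floor that forces $a+it-p\nu_i$ into $\{0,\ldots,p-1\}$, so that Proposition~\ref{prop:PropertiesOff}(3) can be combined cleanly with Lemma~\ref{lemma:DifferentValuations}.
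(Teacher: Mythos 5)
Your argument is correct and coincides with the paper's own proof: both compute $v_L(\pi_K^{-\nu_i} f^i \pi_L^a)=a+it-p\lfloor\frac{a+it}{p}\rfloor$ via Proposition~\ref{prop:PropertiesOff}(3), note that these residues are $\equiv a(i+1)\pmod p$ and hence pairwise distinct since $a\neq 0$, and conclude with Lemma~\ref{lemma:DifferentValuations}. No gaps; your remark that ramified of prime degree implies totally ramified is a valid (and implicit in the paper) justification for applying that lemma.
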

\begin{proof}
The $L$-valuation of $\pi_K^{-\nu_i} f^i\pi_L^a$ is $v_L(f^i\pi_L^a)-\nu_i p$, which by Proposition \ref{prop:PropertiesOff} (3) and the definition of $\nu_i$ is equal to $a+it - p \left\lfloor \frac{it+a}{p} \right\rfloor$. This is nothing but the remainder in the division of $a+it$ by $p$, so it is a non-negative integer in the interval $[0,p-1]$. Furthermore, it is congruent modulo $p$ to $(i+1)a$, so (since $a \not\equiv 0 \pmod{p}$) the $L$-valuations of the $p$ elements $\pi_K^{-\nu_i} f^i\pi_L^a$ for $i=0,\ldots,p-1$ are precisely $\{0,\ldots,p-1\}$ in some order. The claim follows  from Lemma \ref{lemma:DifferentValuations}.
\end{proof}
We conclude this section of preliminaries with a technical lemma that we will need several times.
\begin{lemma}\label{lemma:InequalityOne}
The inequality $e \geq \nu_{p-1} \geq \nu_{s}$ holds for all $s=0,\ldots,p-1$. Moreover, $e > \nu_s$ holds for all $s=0,\ldots,p-2$.
\end{lemma}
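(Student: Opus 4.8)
The plan is to reduce the statement to two ingredients: the exact value of $\nu_{p-1}$, and the inequality $e \geq a+(p-1)t_0$ recorded in Remark~\ref{rmk:EGeqA} (which in turn comes from Proposition~\ref{prop:InequalitiesOneAndt}). The middle inequality $\nu_{p-1}\geq\nu_s$ is the easiest: since $t\geq 1>0$, the integer $st+a$ is strictly increasing in $s$, so $\frac{st+a}{p}$ is strictly increasing and its floor $\nu_s$ is non-decreasing in $s$. Hence $\nu_0\leq\nu_1\leq\cdots\leq\nu_{p-1}$, giving $\nu_{p-1}\geq\nu_s$ for all $s=0,\dots,p-1$.

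Next I would prove $e\geq\nu_{p-1}$ by computing $\nu_{p-1}$ outright. Writing $t=pt_0+a$ one gets $(p-1)t+a=(p-1)(pt_0+a)+a=p\bigl((p-1)t_0+a\bigr)$, an exact multiple of $p$, so $\nu_{p-1}=(p-1)t_0+a$. Remark~\ref{rmk:EGeqA} says precisely that $e\geq a+(p-1)t_0$, i.e.\ $e\geq\nu_{p-1}$.

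Finally, for the strict inequality $e>\nu_s$ when $0\leq s\leq p-2$, I would bound $st+a$ directly. We have $st+a=spt_0+(s+1)a$, and since $s+1\leq p-1$ while $t_0,a\geq 0$ this is at most $(p-2)pt_0+(p-1)a$. A one-line rearrangement gives the identity $(p-2)pt_0+(p-1)a=p\bigl(a+(p-1)t_0\bigr)-t$, so using $e\geq a+(p-1)t_0$ once more together with $t\geq 1$ we obtain $st+a\leq pe-t<pe$. Therefore $\nu_s=\lfloor\frac{st+a}{p}\rfloor<e$, and since $\nu_s$ is an integer, in fact $\nu_s\leq e-1$.

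The one point worth flagging as the real content of the lemma is this last step: replacing the sharp bound $e\geq a+(p-1)t_0$ by the weaker estimate $t\leq\frac{ep}{p-1}$ of Proposition~\ref{prop:InequalitiesOneAndt} is \emph{not} enough to force a strict inequality (it already fails for $p=3$, $e=1$), so it is essential to invoke Remark~\ref{rmk:EGeqA}. Everything else is routine arithmetic; note also that the boundary case $a=0$ requires no separate treatment, since there $t_0=e/(p-1)$ and the displayed inequalities specialise to $\nu_{p-1}=e$ and $\nu_s=se/(p-1)<e$.
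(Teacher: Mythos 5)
Your proof is correct and follows essentially the same route as the paper: monotonicity of $s\mapsto\nu_s$, the exact value $\nu_{p-1}=a+(p-1)t_0$, and the inequality $e\geq a+(p-1)t_0$ from Remark~\ref{rmk:EGeqA}. The only (cosmetic) difference is that the paper obtains the strict inequality by bounding $\nu_{p-2}<a+(p-1)t_0$ and invoking monotonicity, whereas you bound $st+a\leq p\bigl(a+(p-1)t_0\bigr)-t<pe$ directly for every $s\leq p-2$; the arithmetic is the same.
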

\begin{proof}
The sequence $s \mapsto \nu_s$ is increasing in $s$, so it suffices to prove $e \geq \nu_{p-1}$ and $e > \nu_{p-2}$.
By Remark \ref{rmk:EGeqA} we have $e \geq (p-1)t_0 + a$. Now simply observe that $\nu_{p-1} = \left\lfloor \frac{a+(p-1)t}{p} \right\rfloor = \left\lfloor \frac{a+(p-1)pt_0 + (p-1)a}{p} \right\rfloor = a+(p-1)t_0 \leq e$, and that
\[
\nu_{p-2} = \left\lfloor \frac{a+(p-2)t}{p} \right\rfloor \leq \frac{a+(p-2)t}{p}=(p-2)t_0 + \frac{(p-1)a}{p} < a+(p-1)t_0.
\]
\end{proof}

\subsection{The minimal index for cyclic extensions of degree $p$}
Our purpose in this section is to prove the following result, which -- combined with Proposition \ref{prop:aeq0} -- will give a proof of Theorem \ref{thm:IntroCyclicDegreep}.
\begin{theorem}\label{thm:FormulaCyclicExtensionsDegreeP}
Suppose $a \neq 0$. Let $\nu_i$ be as in Definition \ref{def:nui} and let
$\displaystyle 
\mu := \min_{0\leq i\leq p-1}(i e-(p-1)\nu_i).
$
Then $m(L/K)=p^{f_K(\mu + \sum_{i=0}^{p-1} \nu_i)}$.
\end{theorem}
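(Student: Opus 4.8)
The plan is to compute the index $[\mathcal{O}_L : \mathcal{O}_K[G]\omega]$ for an arbitrary normal basis generator $\omega$ by exploiting the basis of $\mathcal{O}_L$ furnished by Theorem~\ref{thm:StructureOfStuff} together with the basis $1,f,\ldots,f^{p-1}$ of $\mathcal{O}_K[G]$ from Proposition~\ref{prop:PropertiesOff}(1). Write $\omega = \sum_{i=0}^{p-1} c_i\, \pi_K^{-\nu_i} f^i \pi_L^a$ with $c_i \in \mathcal{O}_K$ (this is legitimate precisely because the $\pi_K^{-\nu_i}f^i\pi_L^a$ form an $\mathcal{O}_K$-basis of $\mathcal{O}_L$). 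Then $\mathcal{O}_K[G]\omega$ is spanned over $\mathcal{O}_K$ by $f^j\omega$ for $j=0,\ldots,p-1$, so the index is the norm of the determinant of the change-of-basis matrix expressing the $f^j\omega$ in terms of the $\pi_K^{-\nu_i}f^i\pi_L^a$. Equivalently, using \eqref{eq:ind-det}, $v_K[\mathcal{O}_L:\mathcal{O}_K[G]\omega]_{\mathcal{O}_K}$ equals the valuation of that determinant plus (a fixed correction coming from passing between the two bases, which is exactly $\sum_i \nu_i$ once one also accounts for the valuations introduced when rewriting $f^j\omega$ in the distinguished basis). So the heart of the matter is to understand, for each $j$, the valuations of the coefficients when $f^j\omega = \sum_i (\text{something}) \cdot \pi_K^{-\nu_i}f^i\pi_L^a$.

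The key structural input is Proposition~\ref{prop:PropertiesOff}: multiplication by $f$ raises $L$-valuation by at least $t$ (part (6)), $f^i\pi_L^a$ has valuation exactly $a+it$ for $i \le p-1$ (part (3)), and $f^p\pi_L^a$ has valuation $ep+t+a$ (part (5)), while $f^p = -\sum_{j=1}^{p-1}\binom{p}{j}f^j$ with each $\binom{p}{j}$ of $K$-valuation $e$ (part (2)). Thus the matrix of multiplication by $f$ on $\mathcal{O}_L$, written in the basis $\{\pi_K^{-\nu_i}f^i\pi_L^a\}$, is ``almost'' the companion-type shift $e_i \mapsto e_{i+1}$ but with valuation shifts governed by the $\nu_i$ and with the $f^p$-relation wrapping the last column around with valuation controlled by $e$. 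I would compute the valuation of the relevant $p\times p$ determinant by a careful valuation bookkeeping: each application of $f$ to $\pi_K^{-\nu_i}f^i\pi_L^a$ for $i<p-1$ lands on $\pi_K^{-\nu_{i+1}}f^{i+1}\pi_L^a$ up to a unit, contributing $\nu_{i+1}-\nu_i$ to the $\pi_K$-valuation; applying $f$ to the top element $\pi_K^{-\nu_{p-1}}f^{p-1}\pi_L^a$ produces $\pi_K^{-\nu_{p-1}}f^p\pi_L^a$, which via the relation $f^p = -\sum\binom{p}{j}f^j$ expands into the distinguished basis with the minimal-valuation term being controlled by $\min_j(e - \nu_{p-1} + \nu_j + \nu_j)$ — more precisely this is where the quantity $\mu = \min_i(ie - (p-1)\nu_i)$ enters. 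The optimal $\omega$ is the one making all $c_i$ units (so one should check at the end that such $\omega$ exists and is a normal basis generator — follows from Theorem~\ref{thm:StructureOfStuff} and the fact that then $\omega$ has valuation $0$ and generates a lattice of full rank by the valuation-distinctness of the $f^j\omega$, or more carefully via the matrix being nonsingular).

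Concretely, I expect the computation to show that for $\omega$ with all $c_i$ units, the matrix $(f^j\omega)$ in the distinguished basis is, up to units and up to permutation of rows/columns, upper-triangular-ish with the diagonal giving valuation $\sum_{i=1}^{p-1}(\nu_i - \nu_{i-1}) \cdot(\text{column count})$ — which telescopes — plus the ``wrap-around'' column whose minimal valuation is $\mu$ (choosing in the $f^p$-expansion the index $i$ achieving $\min_i(ie-(p-1)\nu_i)$, after converting everything to $\pi_K$-valuations). This gives $v_K[\mathcal{O}_L:\mathcal{O}_K[G]\omega]_{\mathcal{O}_K} = \mu + \sum_{i=0}^{p-1}\nu_i$, hence $m(L/K) = p^{f_K(\mu + \sum_i \nu_i)}$ after applying the ideal norm $N(\pi_K^m) = p^{f_K m}$. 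For the lower bound one argues that \emph{any} $\omega$ gives at least this valuation: writing $\omega$ in the distinguished basis and expanding each $f^j\omega$, the determinant is a sum of products of entries, and a valuation estimate (e.g.\ via the ultrametric inequality applied to the Leibniz expansion, or by noting that each column of the matrix has all entries of valuation bounded below by the corresponding diagonal-type contribution) shows the minimum over all $\omega$ is achieved in the all-units case.

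The main obstacle I anticipate is the bookkeeping of the ``wrap-around'' term coming from $f^p\pi_L^a$: one must correctly combine the valuation $ep+t+a$ of $f^p\pi_L^a$ (equivalently the relation $f^p=-\sum\binom{p}{j}f^j$ with $v_K\binom{p}{j}=e$) with the shifts by $\nu_i$ and express the result in the distinguished $\mathcal{O}_K$-basis, keeping track of which $\pi_K$-powers divide which coefficients, and then identify the minimum of the resulting linear-in-$i$ expression as exactly $\mu = \min_{0\le i\le p-1}(ie-(p-1)\nu_i)$. A secondary subtlety is verifying that the minimum over all $\omega\in\mathcal{O}_L$ is genuinely attained (and attained precisely when the $c_i$ are units, up to lower-order corrections), which requires showing no cancellation in the determinant can do better than the all-units choice — this is where one wants to see the matrix as a perturbation of a ``monomial'' matrix and invoke that perturbations of higher valuation cannot decrease the determinantal valuation.
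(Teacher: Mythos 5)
Your overall framework is the same as the paper's (compute the index as the valuation of the determinant of the matrix of $\omega, f\omega,\ldots,f^{p-1}\omega$ in a basis built from the $f^i\pi_L^a$, control entries via Proposition \ref{prop:PropertiesOff}, and analyse the Leibniz expansion), but the proposal has a genuine gap in the attainment step, and the specific element you propose does not work. Writing $\omega=\sum_{i} c_i\pi_K^{-\nu_i}f^i\pi_L^a$, it is \emph{not} true that the minimum is achieved when all $c_i$ are units. Concretely, take $K=\Q_2$, $L=\Q_2(i)$, so $t=1$, $a=1$, $e=1$, $\nu_0=0$, $\nu_1=1$, $\mu=0$ and $m(L/K)=2$, realised by $\omega=\pi_L$ (i.e.\ $c_0$ a unit, $c_1=0$). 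The matrix of $\{\omega,f\omega\}$ in the basis $\{\pi_L,\pi_K^{-1}f\pi_L\}$ is lower triangular with determinant $c_0\left(c_0\pi_K-2c_1\right)$; since every unit of $\Z_2$ is congruent to $1$ modulo $2$, this has valuation at least $2$ for \emph{every} choice of units $c_0,c_1$, so all all-units elements give index at least $4>2$ (indeed for $c_0=c_1=1$ one gets $\omega=1$, which is not even a normal basis generator, despite having valuation $0$ -- so your existence argument for such $\omega$ also fails). The correct minimiser when $\mu\neq 0$ is the sparse element $\pi_L^a+\pi_K^{-\nu_{i_m}}f^{i_m}\pi_L^a$ (Remark \ref{rmk:MinimalElement}), and the essential point you are missing is a non-cancellation argument: by the ultrametric inequality cancellation can never \emph{decrease} the determinant's valuation (so your stated worry is aimed the wrong way and the lower bound is not the issue there); the danger is that the Leibniz terms of minimal valuation cancel and \emph{raise} it, exactly as in the example above. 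The paper handles this by proving that for the sparse element there is precisely one permutation contributing a term of valuation $\mu$ (Proposition \ref{prop:SingleLeibnizTermWithMinimalValuation} and Lemma \ref{lemma:OnlyPermutation}); this is the technical heart of the proof and is absent from your sketch. Your picture of the matrix as ``upper-triangular-ish plus one wrap-around column'' is also inaccurate: once $f$ has been applied, the wrap-around coming from $f^p=-\sum_j\binom{p}{j}f^j$ propagates into all subsequent columns, cf.\ the second matrix in Equation \eqref{eq:AFleshAndBones}.

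The lower bound also needs more than you indicate. A column-by-column estimate (``each column bounded below by the diagonal-type contribution'') ignores the row constraint in the Leibniz expansion and undershoots badly: e.g.\ for $p=5$, $t=3$, $e=3$ one has $\nu=(0,1,1,2,3)$ and $\mu=-1$, and the naive column bound only gives index valuation $\geq 0$ instead of the true $\sum_i\nu_i+\mu=6$. The route through the Leibniz expansion does work, but it requires the combinatorial identity of Lemma \ref{lemma:RelationKS}: every Leibniz product has valuation $ke-\sum_{j\in S}\nu_j$ with $\sum_{j\in S}j=(p-1)k$, so that its valuation equals $\sum_{j\in S}g(j)\geq(p-1)g(i_m)=\mu$ with $g(j)=\frac{j}{p-1}e-\nu_j$. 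Without this balancing of the number of ``wrap-around'' factors (each contributing $+e$) against the $-\nu_j$ savings, you cannot conclude that every normal basis generator gives index at least $p^{f_K(\mu+\sum_i\nu_i)}$.
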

We identify the elements of $L$ with the vectors of their coordinates with respect to the $K$-basis of $L$ given by $\{f^i \pi_L^a\}_{i=0,\ldots,p-1}$. 

We let  $v:=f^p \pi_L^a=  - \sum_{i=1}^{p-1} {p \choose i} f^i \pi_L^a$ and, with the identification above, we have $v=\begin{pmatrix}
0 \\ -{p \choose 1} \\ -{p \choose 2} \\ \vdots \\ -{p \choose p-1}
\end{pmatrix}$. Also denote by $\lambda_i : L \to K$ the dual basis of $\{f^i \pi_L^a\}_{i=0,\ldots,p-1}$, so that every $\beta \in L$ can be written as $\beta = \sum_{i=0}^{p-1} \lambda_i(\beta) f^i \pi_L^a$.
\begin{lemma}\label{lemma:fiv}
The following hold for all $j =0,\ldots,p-1$:
\begin{enumerate}
\item $\lambda_0 (f^jv)=0$;
\item $v_K(\lambda_i (f^jv)) \geq 2e$ for $i=1,\ldots,j$;
\item $v_K(\lambda_i (f^jv))=e$ for $i=j+1,\ldots,p-1$.
\end{enumerate} 
\end{lemma}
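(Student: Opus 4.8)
The plan is to work directly with the recursion $f^p\pi_L^a = -\sum_{i=1}^{p-1}\binom{p}{i}f^i\pi_L^a$ from Proposition \ref{prop:PropertiesOff}(2), which expresses $v = f^p\pi_L^a$ in the basis $\{f^i\pi_L^a\}$. First I would compute $f^j v = f^{p+j}\pi_L^a$ for $0 \le j \le p-1$ by iterating $f$ on this relation: since $f \cdot f^i\pi_L^a = f^{i+1}\pi_L^a$, and for $i+1 \le p-1$ this is just the next basis vector (with coordinate valuation $0$), while for $i+1 = p$ we substitute the recursion again, each application of $f$ to $v$ ``shifts'' the coordinate vector and folds in one extra copy of the recursion. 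Concretely, $f^j v = -\sum_{i=1}^{p-1}\binom{p}{i} f^{i+j}\pi_L^a$, and I would re-expand every term $f^{i+j}\pi_L^a$ with $i+j \ge p$ using $f^p\pi_L^a = -\sum_k \binom{p}{k} f^k\pi_L^a$ (possibly iterated if $i+j \ge 2p$, though since $i,j \le p-1$ we have $i+j \le 2p-2$, so at most one further substitution is needed). This yields an explicit formula for each coordinate $\lambda_\ell(f^j v)$ as a $\mathbb Z$-linear combination of products of binomial coefficients $\binom{p}{i}$.

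The key point for the valuation estimates is that $v_K\binom{p}{i} = e$ for all $i = 1,\ldots,p-1$ (as $v_K(p) = e$ and $p \nmid \binom{p}{i}$), which was already used in the proof of Proposition \ref{prop:PropertiesOff}(5). Claim (1), $\lambda_0(f^j v) = 0$, should come for free: the recursion has no constant term ($f^p\pi_L^a$ has no $f^0\pi_L^a$ component), and iterating $f$ can never create an $f^0$ term since applying $f$ to the recursion and re-substituting only ever produces terms $f^k\pi_L^a$ with $k \ge 1$. For claim (3), the coordinates $\lambda_\ell(f^j v)$ with $\ell \ge j+1$: tracking the expansion, the term $f^{\ell}\pi_L^a$ with $j+1 \le \ell \le p-1$ arises exactly once ``directly'' from $-\binom{p}{\ell - j} f^{\ell}\pi_L^a$ in $f^j v = -\sum_i \binom{p}{i} f^{i+j}\pi_L^a$ (taking $i = \ell - j$, which lies in $\{1,\ldots,p-1\}$), contributing valuation exactly $e$; all other contributions to $\lambda_\ell(f^jv)$ pass through at least one extra re-substitution of the recursion and hence pick up a second factor $\binom{p}{\cdot}$, contributing valuation $\ge 2e > e$. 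So the minimum valuation in that coordinate is exactly $e$, giving (3). For claim (2), the coordinates $\lambda_\ell(f^j v)$ with $1 \le \ell \le j$: here the ``direct'' term would require $i = \ell - j \le 0$, which is not in range, so \emph{every} contribution passes through at least one re-substitution and carries $\ge 2$ factors of $\binom{p}{\cdot}$, giving valuation $\ge 2e$.

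The main obstacle I anticipate is the bookkeeping in the double-substitution step: when $i + j \ge p$ one must re-expand $f^{i+j}\pi_L^a = f^{i+j-p}(f^p\pi_L^a) = -\sum_k \binom{p}{k} f^{i+j-p+k}\pi_L^a$, and one has to be careful whether $i+j-p+k$ can itself reach $p$ again (it can, when $i,j$ are both close to $p-1$), forcing a third substitution for a few coordinates. The cleanest way to handle this uniformly is probably an induction on $j$: assume the statement for $f^{j-1}v$ (written in the basis), apply $f$ once — which sends $f^\ell\pi_L^a \mapsto f^{\ell+1}\pi_L^a$ for $\ell \le p-2$ and $f^{p-1}\pi_L^a \mapsto f^p\pi_L^a = -\sum_k\binom pk f^k\pi_L^a$ — and check that the valuation bounds (1)--(3) are preserved, the extra factor $\binom{p}{k}$ (hence the extra $+e$) appearing precisely in the coordinates that the inductive hypothesis predicts. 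The base case $j=0$ is just the explicit recursion for $v$ itself, where $\lambda_0(v) = 0$, $\lambda_i(v) = -\binom{p}{i}$ has valuation exactly $e$ for $i = 1,\ldots,p-1$, and conditions (2) is vacuous — matching the claim. I do not expect any deeper difficulty; the whole lemma is a careful but elementary tracking of where the factors of $p$ come from.
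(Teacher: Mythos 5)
Your proposal is correct and matches the paper's proof, which likewise establishes the case $j=0$ from $v_p\binom{p}{i}=1$ for $1\le i\le p-1$ and then proceeds by induction on $j$, applying $f$ once and noting that the wrap-around term $c_{p-1}v$ contributes an extra factor of valuation $e$ exactly in the coordinates predicted. The direct-expansion bookkeeping you describe first is unnecessary; the induction you settle on is precisely the paper's ``immediate induction.''
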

\begin{proof}
For $j=0$ the statement follows from the well-known fact that $v_p \left({p \choose i} \right)=1$ for all $i=1,\ldots,p-1$.
The general case then follows by an immediate induction.
\end{proof}

We now compare several $\mathcal{O}_K$-lattices in $L$, namely $\mathcal{O}_L$, $\mathcal{O}_K[G]   \pi_L^a$, and the lattice $\mathcal{O}_K[G]   \beta$ generated by a normal basis generator $\beta \in \mathcal{O}_L$. To each of these lattices $\Lambda$ we associate a matrix whose $i$-th column is the vector of coordinates of the $i$-th generator of $\Lambda$ in the basis $\{f^i\pi_L^a\}_{i=0,\ldots,p-1}$. The theory of Section \ref{sect:Preliminaries} will then allow us to describe the index $[\mathcal{O}_L : \mathcal{O}_K[G]   \beta]$ in terms of the determinants of these matrices.
By Theorem \ref{thm:StructureOfStuff} we have
$
 \mathcal{O}_L=\langle 1,\pi_K^{-\nu_1}f,...,\pi_K^{-\nu_{p-1}}f^{p-1}\rangle_{\mathcal{O}_K}\pi_L^a,
$
so the matrix of the lattice $\mathcal{O}_L$ is simply
\begin{equation}\label{eq:TransitionMatrixOL}
B := \begin{pmatrix}
1 & 0 & 0 &  \cdots & 0 \\
0 & \pi_K^{-\nu_1} & 0 & \cdots & 0  \\
0 & 0 & \pi_K^{-\nu_2} & \cdots & 0 \\
\vdots & \vdots & \vdots &  \cdots & \vdots \\
0 & 0 & 0 & \cdots & \pi_K^{-\nu_{p-1}}
\end{pmatrix},
\end{equation}
while the lattice $\mathcal{O}_K[G]\pi_L^a=\langle 1,f,...,f^{p-1}\rangle_{\mathcal{O}_K}\pi_L^a$ corresponds to the $p \times p$ identity matrix.

Now let $\beta\in \mathcal{O}_L$ be a normal basis generator of the extension $L/K$. To write the matrix associated  to the lattice $\langle 1,f,...,f^{p-1}\rangle_{\mathcal{O}_K}\beta$ we need to compute the coordinates of $f^j \beta$ in the basis $\{f^i \pi_L^a\}_{i=0,\ldots,p-1}$.
Writing
$
\beta = \begin{pmatrix}
c_0 \\ c_1 \\ \vdots \\ c_{p-1}
\end{pmatrix}
$
we have
$
f\beta = \begin{pmatrix}
0 \\ c_0 \\ c_1 \\ \vdots \\ c_{p-2}
\end{pmatrix} + c_{p-1} v,
$
and iterating $f$ we find
\begin{equation}\label{coefficients}
f^j \beta = \sum_{k=0}^{p-1-j} c_k f^{k+j} \pi_L^a + \sum_{k=0}^{j-1} c_{p-1-k} f^{j-1-k} v.
\end{equation}
Our goal is to estimate the determinant of the matrix corresponding to the lattice $\mathcal{O}_K[G]\beta$. To do this we estimate the valuation of the coefficients of each $f^j\beta$, which are the entries of the aforementioned matrix, looking at the two separate terms of \eqref{coefficients}. We start with the following observation.
\begin{remark}\label{rmk:LowerBoundValuationsCi}
Since $\mathcal{O}_L$ is free over $\mathcal{O}_K$ with basis $\{\pi_K^{-\nu_i} f^i \pi_L^a\}_{i=0,\ldots,p-1}$ we have $v_K(c_i) \geq -\nu_i$ for all $i=0,\ldots,p-1$.
\end{remark}
\begin{lemma}\label{lemma:InequalityOnValuations1}
The following inequality holds for all $j=1,\ldots,p-1$ and $i=1,\ldots,j$:
\[
v_K \left( \lambda_i\left( \sum_{k=0}^{j-1} c_{p-1-k} f^{j-1-k} v \right) \right) \geq e - \nu_{p-1-j+i}.
\]
\end{lemma}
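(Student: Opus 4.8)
The plan is to expand the argument of $\lambda_i$ by $K$-linearity and bound each resulting term using two inputs already established: the valuation estimates for $v_K(\lambda_i(f^m v))$ from Lemma~\ref{lemma:fiv}, and the lower bound $v_K(c_{p-1-k})\geq-\nu_{p-1-k}$ from Remark~\ref{rmk:LowerBoundValuationsCi}. The sum in question is precisely the ``$v$-part'' of $f^j\beta$ appearing in \eqref{coefficients}. Since $\lambda_i$ is $K$-linear,
\[
\lambda_i\!\left( \sum_{k=0}^{j-1} c_{p-1-k}\, f^{j-1-k} v \right) \;=\; \sum_{k=0}^{j-1} c_{p-1-k}\, \lambda_i\!\left( f^{j-1-k} v \right),
\]
so the valuation of the left-hand side is at least $\min_{0\leq k\leq j-1}\big(v_K(c_{p-1-k})+v_K(\lambda_i(f^{j-1-k}v))\big)$, and it suffices to show that each summand on the right has valuation $\geq e-\nu_{p-1-j+i}$.

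Next I would fix $k$ and apply Lemma~\ref{lemma:fiv} with its index parameter set to $m:=j-1-k\in\{0,\dots,p-1\}$. For $i\geq 1$ this splits into two regimes: if $k\leq j-1-i$ (equivalently $i\leq j-1-k$) then $v_K(\lambda_i(f^{j-1-k}v))\geq 2e$, while if $k\geq j-i$ (equivalently $i\geq j-k$) then $v_K(\lambda_i(f^{j-1-k}v))=e$; these two conditions partition $\{0,\dots,j-1\}$. Combining with $v_K(c_{p-1-k})\geq-\nu_{p-1-k}$, the $k$-th summand contributes valuation at least $2e-\nu_{p-1-k}$ in the first regime and at least $e-\nu_{p-1-k}$ in the second.

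The main point — and the only step requiring any care — is to verify that both of these lower bounds dominate $e-\nu_{p-1-j+i}$; this is exactly where the monotonicity of $s\mapsto\nu_s$ and Lemma~\ref{lemma:InequalityOne} are used. In the second regime $k\geq j-i$ forces $p-1-k\leq p-1-j+i$, hence $\nu_{p-1-k}\leq\nu_{p-1-j+i}$ and $e-\nu_{p-1-k}\geq e-\nu_{p-1-j+i}$ directly. In the first regime one uses instead that $\nu_{p-1-k}\leq\nu_{p-1}\leq e$ (Lemma~\ref{lemma:InequalityOne}), so $2e-\nu_{p-1-k}\geq e\geq e-\nu_{p-1-j+i}$, the last step because $\nu_{p-1-j+i}\geq 0$. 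Taking the minimum over $k$ then gives the desired inequality. I expect no serious obstacle; the only bookkeeping is with the degenerate index ranges, namely $i=j$ (the first regime is empty) and $i=1$ (the second regime is the single value $k=j-1$, where the bound is attained with equality), both of which are harmless.
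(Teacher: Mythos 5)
Your proof is correct and follows essentially the same route as the paper: expand by linearity, split the range of $k$ into the two regimes given by Lemma \ref{lemma:fiv}, insert $v_K(c_{p-1-k})\geq-\nu_{p-1-k}$, and conclude via monotonicity of $s\mapsto\nu_s$ together with $\nu_{p-1}\leq e$ from Lemma \ref{lemma:InequalityOne}. The only difference is cosmetic (you bound each summand directly instead of first evaluating the two internal minima at the extremal $k$ as the paper does), so there is nothing to add.
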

\begin{proof}
As $\lambda_i$ is linear, the quantity we want to estimate is at least
\[
\min_{k=0,\ldots,j-1} v_K( \lambda_i( c_{p-1-k} f^{j-1-k} v ) )=
\min_{k=0,\ldots,j-1} \left( v_K(c_{p-1-k}) + v_K( \lambda_i(  f^{j-1-k}v ) ) \right)
\]
Now if $i \leq j-1-k$ we have $v_K(\lambda_i(f^{j-1-k} v)) \geq 2e$ by Lemma \ref{lemma:fiv}, and by the same lemma for all $i$ we have $v_K(\lambda_i(f^{j-1-k} v)) \geq e$,
so the previous expression is bounded below by
\[
\min\left\{ \min_{k=0,\ldots,j-1-i} (2e + v_K(c_{p-1-k}) ), \min_{k=j-i,\ldots,j-1} (e + v_K(c_{p-1-k}) )  \right\}.
\]
Now recall from Remark \ref{rmk:LowerBoundValuationsCi} that $v_K(c_{p-1-k}) \geq -\nu_{p-1-k}$, hence we get the lower bound
\[
\min \left\{ 
\min_{k=0,\ldots,j-1-i} (2e -\nu_{p-1-k} ), \min_{k=j-i,\ldots,j-1} (e - \nu_{p-1-k} )\right\}.
\]
Since the sequence $\nu_s$ is increasing in $s$, each of the two internal minima is achieved for $p-1-k$ as large as possible, that is, for the minimal admissible value of $k$ in each case. Thus we are reduced to studying
\[
\min \{ 2e -\nu_{p-1}, e - \nu_{p-1-j+i} \};
\]
Lemma \ref{lemma:InequalityOne} implies $2e -\nu_{p-1} \geq e \geq e - \nu_{p-1-j+i}$, so the minimum above is in fact equal to $e - \nu_{p-1-j+i}$, and we are done.
\end{proof}

The following is a variant of the previous lemma for $i=j+1,\ldots,p-1$.
\begin{lemma}\label{lemma:InequalityOnValuations2}
The following inequality holds for all $j=1,\ldots,p-1$ and $i=j+1,\ldots,p-1$:
\[
v_K \left( \lambda_i\left( \sum_{k=0}^{j-1} c_{p-1-k} f^{j-1-k} v \right) \right) \geq e-\nu_{p-1}
\]
\end{lemma}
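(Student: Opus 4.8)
The plan is to mimic the proof of Lemma~\ref{lemma:InequalityOnValuations1} almost verbatim, the only difference being that when $i\geq j+1$ none of the indices $j-1-k$ (for $k=0,\ldots,j-1$) can be $\geq i$, so we never get to exploit the stronger bound $v_K(\lambda_i(f^{j-1-k}v))\geq 2e$ from Lemma~\ref{lemma:fiv}(2); only the uniform bound $v_K(\lambda_i(f^{j-1-k}v))\geq e$ from Lemma~\ref{lemma:fiv}(3) is available. Concretely, first I would use linearity of $\lambda_i$ to reduce to
\[
\min_{k=0,\ldots,j-1}\left(v_K(c_{p-1-k})+v_K(\lambda_i(f^{j-1-k}v))\right).
\]
Since $i\geq j+1>j-1\geq j-1-k$ for every $k$ in range, Lemma~\ref{lemma:fiv}(3) gives $v_K(\lambda_i(f^{j-1-k}v))=e$ for all these terms, so the quantity is bounded below by $\min_{k=0,\ldots,j-1}\bigl(e+v_K(c_{p-1-k})\bigr)$.

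Next I would invoke Remark~\ref{rmk:LowerBoundValuationsCi} to replace $v_K(c_{p-1-k})$ by $-\nu_{p-1-k}$, obtaining the lower bound $\min_{k=0,\ldots,j-1}\bigl(e-\nu_{p-1-k}\bigr)$. Because the sequence $s\mapsto\nu_s$ is non-decreasing, $\nu_{p-1-k}$ is largest when $p-1-k$ is largest, i.e.\ when $k=0$, so the minimum equals $e-\nu_{p-1}$, which is exactly the claimed bound.

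There is essentially no obstacle here; the lemma is the ``easy case'' companion to Lemma~\ref{lemma:InequalityOnValuations1}, and the whole argument is a two-line chain of inequalities built from already-established facts (Lemma~\ref{lemma:fiv}, Remark~\ref{rmk:LowerBoundValuationsCi}, and monotonicity of $\nu_s$). The only point requiring a moment's care is the observation that the index condition $i\geq j+1$ forces $i>j-1-k$ for \emph{all} $k$ in the summation range, which is what rules out the $2e$ contribution and makes the bound uniform across $k$; everything else is bookkeeping.

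\begin{proof}
By linearity of $\lambda_i$, the left-hand side is at least
\[
\min_{k=0,\ldots,j-1}\left(v_K(c_{p-1-k})+v_K(\lambda_i(f^{j-1-k}v))\right).
\]
For every $k$ in this range we have $j-1-k\leq j-1<j+1\leq i$, so Lemma \ref{lemma:fiv}(3) gives $v_K(\lambda_i(f^{j-1-k}v))=e$. Hence the expression above is bounded below by
\[
\min_{k=0,\ldots,j-1}\left(e+v_K(c_{p-1-k})\right)\geq \min_{k=0,\ldots,j-1}\left(e-\nu_{p-1-k}\right),
\]
where we used Remark \ref{rmk:LowerBoundValuationsCi}. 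Since the sequence $s\mapsto\nu_s$ is non-decreasing, the minimum is attained at $k=0$, giving the lower bound $e-\nu_{p-1}$ as claimed.
\end{proof}
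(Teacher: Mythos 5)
Your proof is correct and follows essentially the same route as the paper's: the paper likewise bounds each term using $v_K(c_{p-1-k})\geq -\nu_{p-1-k}\geq -\nu_{p-1}$ and $v_K(\lambda_i(f^{j-1-k}v))\geq e$, noting this case is easier than Lemma \ref{lemma:InequalityOnValuations1}. Your extra observation that $i\geq j+1$ forces case (3) of Lemma \ref{lemma:fiv} (so the valuation is exactly $e$) is a harmless refinement of the paper's uniform bound $\geq e$.
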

\begin{proof}
This is proved as the previous lemma, but is in fact easier, because for every index $j$ we have $v_K(c_{p-1-k}) \geq -\nu_{p-1-k} \geq -\nu_{p-1}$ and $v_K(\lambda_i (f^{j-1-k} v)) \geq v_K(p) = e$.
\end{proof}
Combining Equation \eqref{coefficients} (which implies in particular $\lambda_0(f^j \beta)=0$ for $j > 0$) with Lemmas \ref{lemma:InequalityOne}, \ref{lemma:InequalityOnValuations1}, and \ref{lemma:InequalityOnValuations2} allows us to conclude that the coefficients $a_{ij} := \lambda_i(f^j \beta)$ have valuation at least
\[
v_K(a_{ij}) \geq \begin{cases}
- \nu_{i-j} \text{ for } i \geq j \\
e - \nu_{p-1-j+i} \text{ for } i < j
\end{cases}
\]
Notice moreover that $a_{0,j}=0$ for $j=1,\ldots,p-1$. 
Let $A=(a_{ij})_{i,j=0,\ldots,p-1}$ be the matrix corresponding to the lattice $\mathcal{O}_K[G] \beta$ in the basis $f^i\pi_L^a$. 
Recalling that the subgroup index is the norm of the index as a module and relation \eqref{eq:ind-det}, we get
\begin{equation}\label{eq:DeterminantsToIndices}
\begin{split}
[\mathcal{O}_L : \mathcal{O}_K[G]  \beta] &= [\mathcal{O}_L : \mathcal{O}_K[G] \pi_L^a] \, [\mathcal{O}_K[G] \pi_L^a:\mathcal{O}_K[G]  \beta] \\
&=N\left( \det(B^{-1}) \right)N\left(\det(A) \right)=N( \pi_K^{\sum_{i=0}^{p-1} \nu_i} ) N(\det(A)).
\end{split}
\end{equation}
It follows that the index $[\mathcal{O}_L : \mathcal{O}_K[G] \beta]$ is minimal if and only if the valuation of $\det(A)$ is minimal.
We are now in a position to prove our exact formula for $m(L/K)$ in the case of cyclic extensions of order $p$.
Let
\[
\mu := \min_{0\leq i\leq p-1}(i e-(p-1)\nu_i).
\]
We will prove that $v_K(\det(A)) \geq \mu$, and that for suitable $\beta$ equality holds.

In reading the next few paragraphs, the reader may find it useful to bear in mind the following description of $A$, which is equivalent to the inequalities discussed above: we have
\begin{equation}\label{eq:AFleshAndBones}
\begin{split}
 A&=\begin{pmatrix}
d_0 & 0 & 0 &  \cdots & 0 \\
\pi_K^{-\nu_1}d_1 & d_0 & 0 & \cdots & 0  \\
\pi_K^{-\nu_2}d_2 & \pi_K^{-\nu_1}d_1 & d_0 & \cdots & 0 \\
\vdots & \vdots & \vdots & \vdots &  \vdots \\
\pi_K^{-\nu_{p-1}}d_{p-1} & \pi_K^{-\nu_{p-2}}d_{p-2} & \pi_K^{-\nu_{p-3}}d_{p-3} & \cdots & d_0
\end{pmatrix}\\
&+\begin{pmatrix}
0 & 0 & 0 & 0 &  \cdots & 0 \\
0 & \pi_K^{e-\nu_{p-1}} \gamma_{1,1} & \pi_K^{e-\nu_{p-2}} \gamma_{1,2} & \pi_K^{e-\nu_{p-3}} \gamma_{1,3} & \cdots & \pi_K^{e-\nu_{1}} \gamma_{1,p-1} \\
0 & \pi_K^{e-\nu_{p-1}} \gamma_{2,1} & \pi_K^{e-\nu_{p-1}} \gamma_{2,2} & \pi_K^{e-\nu_{p-2}} \gamma_{2,3} & \cdots & \pi_K^{e-\nu_{2}} \gamma_{2,p-1} \\
0 & \pi_K^{e-\nu_{p-1}} \gamma_{3,1} & \pi_K^{e-\nu_{p-1}} \gamma_{3,2} & \pi_K^{e-\nu_{p-1}} \gamma_{3,3} & \cdots & \pi_K^{e-\nu_{3}} \gamma_{3,p-1} \\
\vdots & \vdots & \vdots& \vdots &  \cdots & \vdots \\
0 & \pi_K^{e-\nu_{p-1}} \gamma_{p-1,1} & \pi_K^{e-\nu_{p-1}} \gamma_{p-1,2} & \pi_K^{e-\nu_{p-1}} \gamma_{p-1,3} & \cdots & \pi_K^{e-\nu_{p-1}} \gamma_{p-1,p-1}
\end{pmatrix}
\end{split}
\end{equation}
where we have written $c_i=d_i \pi_K^{-\nu_i}$ with $d_i \in \mathcal{O}_L$ and where every $\gamma_{i,j}$ is in $\mathcal{O}_L$. The two terms of $A$ correspond to the two terms of \eqref{coefficients}.

Let $\tilde{A}$ denote the $(p-1) \times (p-1)$ submatrix $(a_{i,j})_{i,j = 1,\ldots,p-1}$, that is, the bottom-right block of $A$. Since the only non-zero coefficient in the first row of $A$ is $a_{0,0}=c_0=d_0$, the Laplace expansion of $\det A$ gives
$\det(A) = d_0 \det(\tilde{A})$. Define now
\[
b_{i,j} = \begin{cases}
\pi_K^{e - \nu_{(p-1)+(i-j)}}, \text{ if } i<j \\
\pi_K^{-\nu_{i-j}}, \text{ if } i \geq j
\end{cases}
\]
for $i,j = 1,\ldots,p-1$: by what we have already shown, $v_K(a_{i,j}) \geq v_K(b_{i,j})$ for all $i,j=1,\ldots,p-1$. Consider the Leibniz formula for $\det(\tilde{A})$:
\[
\det(\tilde{A}) = \sum_{\sigma \in S_{p-1}} \operatorname{sgn}(\sigma) \prod_{i=1}^{p-1} a_{i, \sigma(i)}.
\]
We will show that $\det(\tilde{A})$ (hence also $\det(A)=d_0 \det(\tilde{A})$) has valuation at least $\mu$ by proving that $v_K\left( \prod_{i=1}^{p-1} a_{i, \sigma(i)} \right) \geq \mu$ for all $\sigma$. Since $v_K(a_{i,\sigma(i)}) \geq v_K(b_{i,\sigma(i)})$, it is enough to prove that the inequality $v_K\left( \prod_{i=1}^{p-1} b_{i, \sigma(i)} \right) \geq \mu$ holds for every $\sigma$.
Given the definition of the coefficients $b_{i,j}$ we have that in general
\[
 \prod_{i=1}^{p-1} b_{i,\sigma(i)}=\pi_K^{ke-\sum_{j\in S}\nu_j},
\]
where $k=k(\sigma)$ is a non-negative integer and $S=S(\sigma)$ is a multiset of indices of cardinality $p-1$ (taking into account the multiplicities).
\begin{lemma}\label{lemma:RelationKS}
If for some $\sigma$ we have $ \prod_{i=1}^{p-1} b_{i,\sigma(i)}=\pi_K^{ke-\sum_{j\in S}\nu_j}$, then $\sum_{j\in S}j=(p-1)k$.
\end{lemma}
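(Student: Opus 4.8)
The plan is to unwind the definitions of $k(\sigma)$ and of the multiset $S(\sigma)$ directly from the exponents $b_{i,j}$, and then to evaluate $\sum_{j\in S}j$ by a manipulation that uses nothing about $\sigma$ beyond the fact that it permutes $\{1,\dots,p-1\}$.

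First I would read off, from the definition of $b_{i,j}$, the contribution of each index $i$ to $\prod_{i=1}^{p-1}b_{i,\sigma(i)}$: an index $i$ with $\sigma(i)>i$ contributes the exponent $e-\nu_{(p-1)+(i-\sigma(i))}$, while an index $i$ with $\sigma(i)\leq i$ contributes the exponent $-\nu_{i-\sigma(i)}$. Hence $k(\sigma)=\#\{i:\sigma(i)>i\}$, and $S(\sigma)$ is the multiset consisting of the integers $(p-1)+(i-\sigma(i))$ over all $i$ with $\sigma(i)>i$, together with the integers $i-\sigma(i)$ over all $i$ with $\sigma(i)\leq i$. (One checks in passing that all these indices lie in $\{0,\dots,p-2\}$, so the $\nu$'s involved are precisely the ones already introduced; this is reassuring but not needed for the statement.)

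Then I would group the two types of contributions and compute
\[
\sum_{j\in S(\sigma)}j=(p-1)\,k(\sigma)+\sum_{i=1}^{p-1}\bigl(i-\sigma(i)\bigr),
\]
the summand $(p-1)$ occurring once for each of the $k(\sigma)$ indices with $\sigma(i)>i$ and the remaining terms being exactly $i-\sigma(i)$ in all cases. Since $\sigma$ permutes $\{1,\dots,p-1\}$ we have $\sum_{i=1}^{p-1}\sigma(i)=\sum_{i=1}^{p-1}i$, so the last sum vanishes and $\sum_{j\in S}j=(p-1)k$, as claimed. There is no genuine obstacle here: the only point requiring attention is the bookkeeping of which index of $\nu$ appears in each factor of $b_{i,\sigma(i)}$ — in particular the shift by $p-1$ in the case $i<j$ — after which the identity is just the elementary fact that a permutation does not change the sum of its arguments.
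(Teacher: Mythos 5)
Your proof is correct and follows essentially the same route as the paper: you read off $k(\sigma)=\#\{i:\sigma(i)>i\}$ and the multiset $S(\sigma)$ of indices $(p-1)+(i-\sigma(i))$ or $i-\sigma(i)$ directly from the definition of $b_{i,j}$, and then conclude via $\sum_{j\in S}j=(p-1)k+\sum_{i}\bigl(i-\sigma(i)\bigr)=(p-1)k$, exactly as in the paper's argument. No gaps.
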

\begin{proof}
By the definition of $b_{i,j}$ we have
\[
\begin{aligned}
v_K \left( \prod_{i} b_{i,\sigma(i)} \right) & = \sum_i \begin{cases}
e - \nu_{(p-1)+(i-\sigma(i))}, \text{ if } i<\sigma(i) \\
-\nu_{i - \sigma(i)}, \text{ if } i \geq \sigma(i)
\end{cases} \\
& = k e - \sum_{j \in S} \nu_j,
\end{aligned}
\]
for $k=\#\{ i \in \{1,\ldots,p-1\} : i < \sigma(i) \}$ and $S$ the multiset 
$
\left\{s_i \bigm\vert i \in \{1,\ldots,p-1\} \right\},
$ 
where 
\[
s_i= \begin{cases} (p-1)+(i-\sigma(i))  \text{ if } i<\sigma(i) \\
i-\sigma(i)  \text{ if } i \geq \sigma(i).
 \end{cases}
\]
In particular,
\[
 \begin{aligned}
 \sum_{s \in S} s & = \sum_{i=1}^{p-1} s_i \\
 & = \sum_{i=1}^{p-1} \begin{cases} (p-1)+(i-\sigma(i))  \text{ if } i<\sigma(i) \\
i-\sigma(i)  \text{ if } i \geq \sigma(i)  \end{cases}
\\
& =k(p-1) + \sum_{i=1}^{p-1} \begin{cases} i-\sigma(i)  \text{ if } i<\sigma(i) \\
i-\sigma(i)  \text{ if } i \geq \sigma(i)
 \end{cases} \\
 & = k(p-1) + \sum_{i=1}^{p-1} i - \sum_{i=1}^{p-1} \sigma(i) \\
 & = k(p-1).
\end{aligned}
\]
\end{proof}
Now consider the function $g:\{0,...,p-1\} \rightarrow \mathbb{Q}$ given by $j \mapsto \frac{j}{p-1}e-\nu_j$
and let $i_m$ be an index that realises the minimum of $g$. Notice that $(p-1)g(i_m)$ realises the minimum of $j \mapsto je - (p-1)\nu_j$, that is, $(p-1)g(i_m)=\mu$.
Hence for every $\sigma\in S_p$ we have
\[
\begin{aligned}
 v_K\left(\prod_ib_{i,\sigma(i)}\right) & 
 = ke-\sum_{j\in S}\nu_j 
 \\ & =\frac{\sum_{j\in S}j}{p-1}e-\sum_{j\in S}\nu_j  & (\text{Lemma \ref{lemma:RelationKS}})
 \\ & =\sum_{j\in S}g(j)\geq (p-1)g(i_m)=\mu
\end{aligned}
\]
as claimed. 
Finally, we show that for suitable $\beta\in \mathcal{O}_L$ we do in fact have $v_K(\det A)=\mu$. If the minimum of the function $g(j)$ is $0$ we can take $\beta=\pi_L^a$ (in this case $\mu=0$ and the matrix $A$ is simply the identity), otherwise
we claim that $\beta=\pi_L^a+\pi_K^{-\nu_{i_m}}f^{i_m}\pi_L^a$ does the job (in terms of Equation \eqref{eq:AFleshAndBones} we are choosing $d_0=d_{i_m}=1$ and $d_i =0 $ for $i \neq 0, i_m$). Notice that we may assume $i_m>0$, for otherwise the minimum would be $0$. Similarly, we may assume $i_m < p-1$, because $g(p-1) \geq 0$ by Lemma \ref{lemma:InequalityOne}.
To show the claim, we begin by noticing that specialising Equation \eqref{coefficients} to our choice of $\beta$ we obtain
\begin{equation}\label{eq:ValuationsSpecialA}
f^k \beta = f^k \pi_L^a + \begin{cases} \pi_K^{-\nu_{i_m}} f^{k+i_m} \pi_L^a, \text{ if } k+i_m \leq p-1 \\
\pi_K^{-\nu_{i_m}} f^{k+i_m-p} v, \text{ if } k +i_m \geq p,
\end{cases}
\end{equation}
We now show that in the Leibniz development of $\det(A)$ there is precisely one summand of valuation $\mu$, while all other have strictly larger valuation, whence $v_K(\det A)=\mu$ will follow. Observe that with our choice we have $d_0=1$, so $\det(A)=\det(\tilde{A})$; we will work with this latter determinant and its Leibniz expansion.
Let $\tau : \{1,\ldots,p-1\} \to \{1,\ldots,p-1\}$ be defined by
\[
\tau(j) = \begin{cases}
\begin{array}{ll}
i_m+j & \text{ if } j+i_m \leq p-1 \\
i_m+j+1-p & \text{ if } j+i_m \geq p \\
\end{array}
\end{cases}
\]
One checks that the function $\tau$ is a permutation (indeed, it's easily seen to be injective). The Leibniz term
$
\operatorname{sgn} \tau \cdot \prod_{j=1}^{p-1} a_{\tau(j),j}
$
in the development of $\det(\tilde{A})$ has valuation
\[
\sum_{j=1}^{p-1} v_K(a_{\tau(j),j}) = \sum_{j=1}^{p-1-i_m} (-\nu_{i_m}) + \sum_{j=p-i_m}^{p-1} (e-\nu_{i_m})=(p-1)g(i_m)=\mu.
\]
We now show that all other terms in the Leibniz development have strictly greater valuation. Since we already know that $v_K \left( \prod_{i=1}^{p-1} a_{i,\sigma(i)} \right) \geq \mu$, it suffices to prove the following.
\begin{proposition}\label{prop:SingleLeibnizTermWithMinimalValuation}
Let $\sigma$ be a permutation such that $v_K \left( \prod_{i=1}^{p-1} a_{i,\sigma(i)} \right) = \mu$. Then $\sigma = \tau^{-1}$.
\end{proposition}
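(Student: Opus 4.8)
The plan is to leverage the explicit form of the matrix $A$ attached to the distinguished element $\beta = \pi_L^a + \pi_K^{-\nu_{i_m}} f^{i_m}\pi_L^a$, which is given by Equation \eqref{eq:ValuationsSpecialA}. Reading that formula column by column: in the column of $f^j\beta$ (for $1\le j\le p-1$), if $j+i_m\le p-1$ then the only nonzero entries are a unit in row $j$ and $\pi_K^{-\nu_{i_m}}$ in row $j+i_m$; if instead $j+i_m\ge p$, set $m := j+i_m-p\in\{0,\ldots,p-2\}$, so the second summand is $\pi_K^{-\nu_{i_m}}f^{m}v$, and Lemma \ref{lemma:fiv} shows that in this column the entry is a unit in row $j$ (using $\nu_{i_m}<e$, which holds by Lemma \ref{lemma:InequalityOne} since $0<i_m<p-1$), has valuation exactly $e-\nu_{i_m}$ in every row $i$ with $m+1\le i\le p-1$ and $i\neq j$, and has valuation at least $2e-\nu_{i_m}$ (or vanishes) in every row $i$ with $1\le i\le m$.

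Next I would record the two consequences of the hypothesis $v_K\bigl(\prod_i a_{i,\sigma(i)}\bigr)=\mu$. Since $v_K(a_{i,j})\ge v_K(b_{i,j})$ always holds, and since it has already been shown (using Lemma \ref{lemma:RelationKS}) that $v_K\bigl(\prod_i b_{i,\sigma(i)}\bigr)=\sum_{s\in S(\sigma)}g(s)\ge(p-1)g(i_m)=\mu$, equality in the hypothesis forces both \textbf{(A)} $g(s)=g(i_m)$ for every exponent $s$ collected in the multiset $S(\sigma)$, and \textbf{(B)} $v_K(a_{i,\sigma(i)})=v_K(b_{i,\sigma(i)})$ for every $i$. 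Comparing (B) with the description of $A$ above, column by column, leaves for each $j$ only the following possibilities for the row $\sigma^{-1}(j)$: either $\sigma^{-1}(j)=j$ (contributing the exponent $s=0$); or, when $j+i_m\le p-1$, $\sigma^{-1}(j)=j+i_m$ (exponent $s=i_m$); or, when $j+i_m\ge p$, some row $i$ with $m+1\le i<j$ and $\nu_{(p-1)+(i-j)}=\nu_{i_m}$ (exponent $s=(p-1)+(i-j)$). Every row outside these lists is excluded because the corresponding entry of $A$ either vanishes or has valuation strictly larger than the matching entry of the $b$-matrix.

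The final step removes the remaining ambiguity via (A) and the sign of $\mu$. We are in the case $\mu<0$, that is, precisely the range $i_m>0$ in which $\beta$ is taken as above; hence $g(0)=0>g(i_m)=\mu/(p-1)$, so $0$ is not a minimiser of $g$, and (A) therefore forbids the exponent $s=0$. Consequently $\sigma$ fixes no column. When $j+i_m\le p-1$ this forces $\sigma^{-1}(j)=j+i_m=\tau(j)$. When $j+i_m\ge p$, (B) gives $\nu_{(p-1)+(i-j)}=\nu_{i_m}$ and (A) gives $g\bigl((p-1)+(i-j)\bigr)=g(i_m)$; since $g(s)=\tfrac{s}{p-1}e-\nu_s$, subtracting these two equalities yields $(p-1)+(i-j)=i_m$, i.e.\ $\sigma^{-1}(j)=m+1=\tau(j)$. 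Thus $\sigma^{-1}=\tau$, equivalently $\sigma=\tau^{-1}$, as claimed.

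I expect the main obstacle to be the valuation bookkeeping in the columns with $j+i_m\ge p$: one has to verify via Lemma \ref{lemma:fiv} (together with $\nu_{i_m}<e$) that the relevant off-diagonal entries there have valuation \emph{exactly} $e-\nu_{i_m}$ while the diagonal entry remains a unit, and that the entries with $1\le i\le m$ and those with $i>j$ are genuinely ruled out by (B). Once this is settled, extracting $\sigma=\tau^{-1}$ from (A) and (B) is a short combinatorial step.
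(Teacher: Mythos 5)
Your proof is correct, and its skeleton is the paper's: both arguments extract from the hypothesis the two equality conditions you call (A) and (B), via the chain $\mu=\sum_i v_K(a_{i,\sigma(i)})\geq\sum_i v_K(b_{i,\sigma(i)})=\sum_{s\in S}g(s)\geq (p-1)g(i_m)=\mu$ together with Lemma \ref{lemma:RelationKS}, and both rest on the explicit column-by-column valuations of Equations \eqref{eq:ValuationsSpecialA} and \eqref{eq:ExplicitValuations}. The difference is in the endgame. The paper uses the equality conditions only to force every $v_K(a_{i,\sigma(i)})$ into $\{-\nu,e-\nu\}$ and then concludes with the purely combinatorial Lemma \ref{lemma:OnlyPermutation}: the admissible entries in the last $i_m$ columns a priori form a band of rows (all $i$ with the same $\nu$-value as $\nu_{i_m}$), and a counting-and-erasing induction shows that the only permutation supported on admissible entries is $\tau^{-1}$. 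You instead exploit condition (A) more fully: in a column $j\geq p-i_m$, condition (B) yields $\nu_{(p-1)+(i-j)}=\nu_{i_m}$ while condition (A) yields $g\bigl((p-1)+(i-j)\bigr)=g(i_m)$, and subtracting the two equalities pins $(p-1)+(i-j)=i_m$ outright, so each late column is determined without any combinatorial induction; your exclusion of diagonal entries via $g(0)=0>g(i_m)$ (legitimate, since $\mu<0$ and $0<i_m<p-1$ in the case where this $\beta$ is used) plays the role of the paper's exclusion of valuation-$0$ entries. This finish is slightly sharper: it also shows that the extra admissible rows the paper mentions parenthetically (indices $i\geq i_m$ with $\nu_i=\nu_{i_m}$) can never actually appear in a minimal Leibniz term. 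Either route completes the proposition.
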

By Equation \eqref{eq:ValuationsSpecialA} and Lemma \ref{lemma:fiv} we know that %
\begin{equation}\label{eq:ExplicitValuations}
v_K(a_{i, j}) = \begin{cases}
0, \text{ if } i=j \\
\infty, \text{ if } j \leq p-i_m-1 \text{ and }i \neq j \text{ and } i \neq j+i_m \\
-\nu_{i_m}, \text{ if } j \leq p-i_m-1 \text{ and } i=j+i_m \\
e- \nu_{i_m}, \text{ if } j \geq p-{i_m} \text{ and } i \neq j \text{ and } i \geq j+i_m-(p-1) \\
v_K(a_{i, j}) \geq 2e-\nu_{i_m}, \text{ if } j \geq p-i_m \text{ and } i \leq j+i_m-p
\end{cases}
\end{equation}
We set for simplicity $\nu := \nu_{i_m}$. We shall need the following observation multiple times:
\begin{lemma}
We have $e-\nu > 0$.
\end{lemma}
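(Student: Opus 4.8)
The plan is to read off the inequality directly from Lemma~\ref{lemma:InequalityOne}, using the constraints on $i_m$ that were recorded just before the statement. Recall that we are in the situation where $\mu \neq 0$ (if $\mu = 0$ one takes $\beta = \pi_L^a$ and there is nothing to prove), and that $\mu = (p-1) g(i_m)$; since $g(0) = \frac{0}{p-1}e - \nu_0 = -\nu_0 = 0$ --- because $0 \le a \le p-1$ gives $\nu_0 = \lfloor a/p \rfloor = 0$ --- the minimum of $g$ is strictly negative, so $g(i_m) < 0$. From this one sees that $i_m \neq 0$ (else $g(i_m) = g(0) = 0$) and that $i_m \neq p-1$ (else $g(i_m) = e - \nu_{p-1} \ge 0$ by Lemma~\ref{lemma:InequalityOne}); these are exactly the two reductions already noted in the paragraph where $\beta$ is introduced, so in particular $i_m \in \{0,\ldots,p-2\}$.

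With this in hand, I would simply apply the second assertion of Lemma~\ref{lemma:InequalityOne}, which says that $e > \nu_s$ for every $s \in \{0,\ldots,p-2\}$. Taking $s = i_m$ gives $e > \nu_{i_m} = \nu$, i.e.\ $e - \nu > 0$. I do not expect any genuine difficulty here: the statement is an immediate corollary of Lemma~\ref{lemma:InequalityOne}, and the only thing to be careful about --- namely that $i_m$ is neither $0$ nor $p-1$ --- has already been dealt with when $\beta$ was chosen.
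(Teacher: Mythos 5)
Your proof is correct and is essentially the paper's own argument: the paper also deduces the lemma from the second assertion of Lemma~\ref{lemma:InequalityOne} together with the earlier observation that $i_m \neq p-1$ (and $i_m \neq 0$) once the case $\mu=0$ has been disposed of by taking $\beta=\pi_L^a$. You merely spell out the reductions that the paper cites as "already observed", so there is nothing to add.
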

\begin{proof}
Follows from Lemma \ref{lemma:InequalityOne} and the fact that, as already observed, $i_m$ cannot be $p-1$.
\end{proof}
By what we have already shown, the inequalities
\[
v_K\left( a_{i, \sigma(i)} \right) \geq v_K\left( b_{i, \sigma(i)} \right), \quad \sum_{i=1}^{p-1} v_K\left( b_{i, \sigma(i)} \right) \geq \mu
\]
hold for every permutation $\sigma$. If $\sigma$ is such that $v_K\left(\prod_{i=1}^{p-1} a_{i, \sigma(i)} \right)=\mu$ holds, then equality must hold in both inequalities above. In particular, we must have $v_K(a_{i,\sigma(i)}) = v_K(b_{i,\sigma(i)})$ for all $i$. We now remark that $v_K(b_{i,j})$ is either of the form $-\nu_{i-j} \leq 0 < 2e-\nu$ or of the form $e-\nu_{(p-1)+(i-j)} < 2e-\nu$. It follows that none of the terms $a_{i, \sigma(i)}$ can have valuation $2e-\nu$ (or more), for otherwise its valuation would be strictly greater than the valuation of the corresponding  term $b_{i,\sigma(i)}$. Hence, by \eqref{eq:ExplicitValuations}, for each $i$ we have $v_L(a_{i,\sigma(i)})=v_L(b_{i,\sigma(i)}) \in \{0, -\nu, e-\nu\}$. Next suppose that $v_K(a_{i,\sigma(i)}) = e-\nu > 0$. Then also $v_K(b_{i,\sigma(i)}) > 0$, so $b_{i,\sigma(i)}$ must be of the form $\pi_K^{e-\nu_{(p-1) + (i-\sigma(i))}}$, and in order for equality to hold we must have $\nu_{(p-1) + (i-\sigma(i))} = \nu$. Finally, if $a_{i,\sigma(i)}$ has valuation $0=\nu_0$ or $-\nu$, then so does the corresponding $b_{i,\sigma(i)}$. It follows that
\[
v_K\left( \prod_{i=1}^{p-1} b_{i,\sigma(i)} \right) = ke - \sum_{j \in S} \nu_j
\]
for some multiset $S$ with the property that every $\nu_j$ is either equal to $\nu$ or to $0$, and (by Lemma \ref{lemma:RelationKS}) $(p-1)k = \sum_{j \in S} j$. Notice that we are not claiming that every $j$ is either $0$ or $i_m$, but just that $\nu_j \in \{0, \nu\}$ for every $j \in S$. We may then continue our chain of inequalities as follows:
\[
\begin{aligned}
\mu & = v_K \left( \prod_{i=1}^{p-1} a_{i,\sigma(i)} \right) 
= v_K \left( \prod_{i=1}^{p-1} b_{i,\sigma(i)} \right)\\
& = ke - \sum_{j \in S} \nu_j = \sum_{j \in S} \left( \frac{j}{p-1}e - \nu_j \right) \\
& = \sum_{j \in S} g(j) \geq (p-1) g(i_m) = \mu,
\end{aligned}
\]
which again yields that equality must hold everywhere. In particular, we must have that for all $j \in S$ the equality $g(j)=g(i_m)$ holds, so that for no $j \in S$ we can have $\nu_j=0$. By the above remarks, this implies that no valuation $v_K(b_{i,\sigma(i)})$ is equal to $0$, hence no valuation $v_K(a_{i,\sigma(i)})$ is equal to $0$ either. Since clearly no $v_K(a_{i,\sigma(i)})$ can be equal to $\infty$ (for otherwise $\prod_{i=1}^{p-1} a_{i,\sigma(i)}=0$ does not have valuation $\mu$), by Equation \eqref{eq:ExplicitValuations} we have shown that $v_K(a_{i,\sigma(i)})$ is either $-\nu$ or $e-\nu$ for every $i=1,\ldots,p-1$. 

Let us recapitulate which terms of $\tilde{A}$ are such that $v_K(a_{i,j})=v_K(b_{i,j})\in \{-\nu,e-\nu\}$ (these are the only entries $a_{i,\sigma(i)}$ that may be involved in a Leibniz term of minimal valuation): from \eqref{eq:ExplicitValuations} we see that if $1\leq j\leq p-1-i_m$ then we can only have $v_K(a_{\sigma^{-1}(j),j})=-\nu$ and $\sigma^{-1}(j)=j+i_m$; if $p-i_m\leq j \leq p-1$ then $v_K(a_{\sigma^{-1}(j),j})=e-\nu$ and $\sigma^{-1}(j)\geq j-p+1+i_m$ (more precisely, even if this is not necessary for the proof, note that we also have $\sigma^{-1}(j)\leq  j-p+1+\max\{i\geq i_m:\nu_i=\nu_{i_m}\}$). While reading the rest of the proof, the reader may find it useful to look at the following matrix, where in correspondence of such terms $(i,j)$ we display their valuation $v_K(a_{i,j})$:
\[
\begin{matrix}
\phantom{a} \\ \phantom{\vdots} \\ \phantom{\vdots} \\ \phantom{c} \\ \phantom{\vdots} (i_m+1)\text{-th row} \rightarrow \\ \phantom{e} \\ \phantom{\ddots} \\ \phantom{h}
 \end{matrix}
\begin{pmatrix}
 &  &  &  & e-\nu & & & \\
 &  &  &  & \vdots & e-\nu & & \\
 &  &  &  & e-\nu & \vdots & \ddots & \\
 &  &  &  & & e-\nu & & e-\nu \\
-\nu &  &  &  & & & \ddots & \vdots \\
 & -\nu  &  &  & & & & e-\nu \\
 &  & \ddots &  & & & & \\
 &  &  & -\nu & & & & \\
\end{pmatrix}\begin{matrix}
\phantom{a} \\ \phantom{\vdots} \\ \phantom{\vdots} \\ \leftarrow i_m\text{-th row} \\ \phantom{\ddots}  \\ \phantom{e} \\ \phantom{\ddots} \\ \phantom{h}
 \end{matrix}
\]
\begin{lemma}\label{lemma:OnlyPermutation}
Let $\sigma : \{1,\ldots,p-1\} \to \{1,\ldots,p-1\}$ be a permutation such that $v_K(a_{i,\sigma(i)})\in \{-\nu, e-\nu\}$ for all $i=1,\ldots,p-1$. Then $\sigma=\tau^{-1}$.
\end{lemma}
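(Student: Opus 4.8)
The plan is to exploit the explicit valuation table \eqref{eq:ExplicitValuations} in order to determine, column by column, which entries $a_{i,j}$ of $\tilde A$ can possibly have valuation in $\{-\nu, e-\nu\}$, and then to show that forcing $v_K(a_{i,\sigma(i)}) \in \{-\nu, e-\nu\}$ for every $i$ pins $\sigma$ down completely. Throughout one uses that $0 < i_m < p-1$ (established just before \eqref{eq:ValuationsSpecialA}), which gives $\nu = \nu_{i_m} \geq 1$ and, by the lemma preceding Proposition \ref{prop:SingleLeibnizTermWithMinimalValuation}, $e - \nu > 0$. Consequently $-\nu < 0 < e-\nu$, and any entry of valuation $0$, $\infty$, or $\geq 2e-\nu$ is automatically excluded from consideration.

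First I would handle the columns $j \in \{1,\ldots,p-1-i_m\}$. For such $j$, \eqref{eq:ExplicitValuations} shows that the only entry of the $j$-th column with valuation in $\{-\nu, e-\nu\}$ is $a_{j+i_m,\,j}$ (of valuation $-\nu$): the diagonal entry $a_{j,j}$ has valuation $0$ and every other entry of the column vanishes. This forces $\sigma^{-1}(j) = j+i_m$ for $j = 1,\ldots,p-1-i_m$, so $\sigma^{-1}$ maps $\{1,\ldots,p-1-i_m\}$ bijectively onto $\{i_m+1,\ldots,p-1\}$; being a permutation, it must then map the complementary block $\{p-i_m,\ldots,p-1\}$ bijectively onto $\{1,\ldots,i_m\}$.

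Next I would treat the remaining columns $j \in \{p-i_m,\ldots,p-1\}$. By \eqref{eq:ExplicitValuations}, in such a column the entries with admissible valuation are exactly the $a_{i,j}$ with $i \neq j$ and $i \geq j+i_m-(p-1)$, all of valuation $e-\nu$, since the diagonal entry has valuation $0$ and the entries with $i \leq j+i_m-p$ have valuation $\geq 2e-\nu$. Hence $\sigma^{-1}(j) \geq j + i_m - (p-1)$. Reparametrising $j = (p-i_m)+(l-1)$ with $l \in \{1,\ldots,i_m\}$ turns this into $\psi(l) \geq l$, where $\psi(l) := \sigma^{-1}((p-i_m)+(l-1))$ is, by the previous paragraph, a permutation of $\{1,\ldots,i_m\}$. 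The key point, which I expect to be the only genuinely combinatorial step, is the elementary fact that a permutation of $\{1,\ldots,i_m\}$ fixing or increasing every element must be the identity (downward induction from $l = i_m$); this yields $\sigma^{-1}(j) = j+i_m-(p-1)$ on this block as well.

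Putting the two cases together, $\sigma^{-1}(j)$ equals $i_m+j$ when $j+i_m \leq p-1$ and $i_m+j+1-p$ when $j+i_m \geq p$, which is precisely the definition of $\tau$; hence $\sigma^{-1} = \tau$, i.e.\ $\sigma = \tau^{-1}$. The main obstacle is essentially organisational: one must check that the ranges $\{1,\ldots,p-1-i_m\}$ and $\{p-i_m,\ldots,p-1\}$ partition $\{1,\ldots,p-1\}$, that the row indices produced (e.g.\ $j+i_m$) stay within range, and that the ``$i\neq j$'' exclusions in \eqref{eq:ExplicitValuations} never remove the entry one needs — all of which follow from $0 < i_m < p-1$.
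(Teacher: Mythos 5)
Your proof is correct and follows essentially the same route as the paper: both read off from \eqref{eq:ExplicitValuations} that each column $j\leq p-1-i_m$ contains a unique admissible entry, forcing $\sigma^{-1}(j)=j+i_m$ there, and then pin down $\sigma$ on the complementary block using the constraint $\sigma^{-1}(j)\geq j+i_m-(p-1)$. The only cosmetic difference is the final step, where the paper argues greedily row by row (unique admissible entry in the first row, erase those columns, induct), while you invoke the equivalent elementary fact that a permutation $\psi$ of $\{1,\ldots,i_m\}$ with $\psi(l)\geq l$ for all $l$ is the identity.
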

\begin{proof}
For every $i=1,\ldots,p-1$ the element $(i,\sigma(i))$ must be among those displayed in the matrix above (call such elements \textit{admissible}). For $j \leq p-i_m-1$, the $j$-th column contains only one admissible element, in position $j+i_m$, hence $\sigma^{-1}(j)=j+i_m$ for $j=1,\ldots,p-1-i_m$, that is, $\sigma(i)=i-i_m=\tau^{-1}(i)$ for $i=i_m+1,\ldots,p-1$. It remains to determine $\sigma(i)$ for $i \leq i_m$. There is only one admissible element on the first row, in position $(1,p-i_m)$, which forces $\sigma(1)=p-i_m$. After erasing the first $p-i_m$ columns there is only one admissible element on the second row, in position $(2,p-i_m+1)$, hence $\sigma(2)=p-i_m+1$. Continuing in this way, by induction one shows easily that $\sigma(i)=(p-i_m-1)+i=\tau^{-1}(i)$ for $i=1,\ldots,i_m$, hence $\sigma=\tau^{-1}$ as claimed.
\end{proof}
This finishes the proof of Proposition \ref{prop:SingleLeibnizTermWithMinimalValuation}, hence it shows that for our special choice of $\beta$ we have $v_K(\det(A))=\mu$. 
It now follows from Equation \eqref{eq:DeterminantsToIndices} and the previous considerations that 
\[
 m(L/K)=N(\pi_K^{\sum_{i=0}^{p-1} \nu_i} ) \cdot N(\pi_K^{\mu})= p^{f_K(\sum_{i=0}^{p-1} \nu_i+\min_{0\leq i\leq p-1}(i e-(p-1)\nu_i))},
\]
which concludes the proof of Theorem \ref{thm:FormulaCyclicExtensionsDegreeP}.
\begin{remark}\label{rmk:MinimalElement}
The proof shows in particular that the element $\beta = \pi_L^a$ (if $\mu=0$) or $\beta=\pi_L^a+\pi_K^{-\nu_{i_m}}f^{i_m}\pi_L^a$ (if $\mu \neq 0$) realises the minimal index.
\end{remark}

\section{Some classical results on cyclic extensions of degree $p$}\label{sect:BertrandiasFerton}
In this section we recover two results, originally due to Bertrandias, Bertrandias and Ferton, concerning the Galois structure of the ring of integers in cyclic extensions of degree $p$: we characterise the associated order and the cases when $\mathcal{O}_L$ is free over $\mathfrak{A}_{L/K}$. The original results may be found in \cite{MR296047}, \cite{MR0374104} and \cite{MR296048}, but these papers do not contain detailed proofs. We remark that in \cite{MR296048} the authors also obtain a characterisation of the almost-maximally ramified extensions $L/K$ for which $\mathcal{O}_L$ is $\mathfrak{A}_{L/K}$-free. Specifically, they show that -- for $L/K$ almost-maximally ramified -- the ring of integers $\mathcal{O}_L$ is free over $\mathfrak{A}_{L/K}$ if and only if in the continued fraction expansion $\frac{t}{p} = a_0 + \frac{1}{a_1+\frac{1}{a_2+\cdots}}=[a_0; a_1,a_2,\ldots,a_N]$ one has $N \leq 4$. While it would probably be possible to also obtain this result by our methods, the proof would involve computations very similar to those of \cite{MR543208}, so we prefer not to treat this case. 
 
Our approach is based on the following corollary, itself a consequence of Proposition \ref{prop:PropertiesOff}.
\begin{corollary}\label{cor:iPlusjGreaterThanP}
Let $0 \leq i,j \leq p-1$ be such that $i+j \geq p$. Then
\[
\pi_K^{-\nu_i} \, f^i \cdot \pi_K^{-\nu_j} f^j \pi_L^a
\]
lies in $\mathcal{O}_L$, and if $L/K$ is not almost-maximally ramified, then it also lies in $\pi_K \mathcal{O}_L$.
\end{corollary}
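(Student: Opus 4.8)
The plan is to reduce the statement to a single valuation estimate and then read it off from Proposition~\ref{prop:PropertiesOff} together with Remark~\ref{rmk:EGeqA}. Since $\pi_K\in K$ is central and $f$ commutes with itself, the displayed element equals $\pi_K^{-\nu_i-\nu_j}f^{i+j}\pi_L^a$, so it suffices to bound
\[
v_L\left(\pi_K^{-\nu_i-\nu_j}f^{i+j}\pi_L^a\right)=-p(\nu_i+\nu_j)+v_L\left(f^{i+j}\pi_L^a\right)
\]
from below: by $0$ in general, and by $p$ when $L/K$ is not almost-maximally ramified (note $\pi_K\mathcal{O}_L=\pi_L^p\mathcal{O}_L$, as $L/K$ is totally ramified of degree $p$). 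The hypotheses $0\le i,j\le p-1$ and $i+j\ge p$ force $i,j\ge 1$ and $k:=i+j-p\in\{0,\dots,p-2\}$.

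First I would estimate $v_L(f^{i+j}\pi_L^a)$. Writing $f^{i+j}\pi_L^a=f^{k}\left(f^{p}\pi_L^a\right)$, part (5) of Proposition~\ref{prop:PropertiesOff} gives $v_L(f^{p}\pi_L^a)=ep+t+a>0$, so $f^{p}\pi_L^a\in\mathcal{O}_L$; applying part (6) of Proposition~\ref{prop:PropertiesOff} $k$ times (each intermediate iterate still lying in $\mathcal{O}_L$) then yields $v_L(f^{i+j}\pi_L^a)\ge ep+t+a+kt=ep+a+(i+j-p+1)t$. Next, unwinding Definition~\ref{def:nui}, write $p\nu_i=it+a-r_i$ with $r_i:=(it+a)\bmod p\in\{0,\dots,p-1\}$, so that $p(\nu_i+\nu_j)=(i+j)t+2a-r_i-r_j$. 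Substituting both into the identity above, the $t$-terms and one copy of $a$ cancel, leaving
\[
v_L\left(\pi_K^{-\nu_i-\nu_j}f^{i+j}\pi_L^a\right)\ge ep-(p-1)t-a+r_i+r_j.
\]

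To finish, I would use that, with $t=pt_0+a$, one has the identity $ep-(p-1)t-a=p\left(e-a-(p-1)t_0\right)$. By Remark~\ref{rmk:EGeqA} we always have $e\ge a+(p-1)t_0$, so the right-hand side is $\ge 0$ and the element lies in $\mathcal{O}_L$; and if $L/K$ is not almost-maximally ramified, the same remark gives $e\ge a+(p-1)t_0+1$, whence $ep-(p-1)t-a\ge p$ and the element lies in $\pi_K\mathcal{O}_L$.

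This is essentially a bookkeeping argument, and I do not expect a genuine obstacle; the only points needing care are checking that the hypotheses of Proposition~\ref{prop:PropertiesOff}(5)--(6) are in force (totally ramified, $a\ne 0$, so that $f^{p}\pi_L^a\in\mathcal{O}_L$ and part (6) may be iterated), and invoking the sharp bound $e\ge a+(p-1)t_0$ from Remark~\ref{rmk:EGeqA} rather than the weaker $t\le ep/(p-1)$.
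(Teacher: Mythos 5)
Your proposal is correct and follows essentially the same route as the paper's proof: rewrite the element as $\pi_K^{-\nu_i-\nu_j}f^{i+j-p}\left(f^p\pi_L^a\right)$, use Proposition~\ref{prop:PropertiesOff}~(5) and iterate~(6), bound $\nu_i$ via $\nu_i\le\frac{a+it}{p}$ (your exact-remainder bookkeeping with $r_i\ge 0$ is the same estimate), and conclude from $e\ge a+(p-1)t_0$, respectively $e\ge a+(p-1)t_0+1$, as in Remark~\ref{rmk:EGeqA}. No substantive differences from the paper's argument.
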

\begin{proof}
We may write
\[
\pi_K^{-\nu_i} \, f^i \cdot \pi_K^{-\nu_j} f^j \pi_L^a = \pi_K^{-\nu_i-\nu_j} f^{i+j-p} f^p \pi_L^a,
\]
and by Proposition \ref{prop:PropertiesOff} (5) we know $v_L(f^p \pi_L^a) = ep+t+a$. Since every further application of $f$ increases the valuation by at least $t$ (by Proposition \ref{prop:PropertiesOff} (6)), we get
\[
\begin{aligned}
v_L( \pi_K^{-\nu_i} \, f^i \cdot \pi_K^{-\nu_j} f^j \pi_L^a) & \geq v_L(\pi_K^{-\nu_i-\nu_j}) + (i+j-p)t +ep+t+a \\
& = (-\nu_i-\nu_j)p + (i+j-p+1)t + ep + a
\end{aligned}
\]
Using the obvious inequality $\nu_k \leq \frac{a+kt}{p}$ and the fact that $e \geq (p-1)t_0+a=t-t_0$ (Remark \ref{rmk:EGeqA}) we obtain
\[
\begin{aligned}
v_L(\pi_K^{-\nu_i} \, f^i \cdot \pi_K^{-\nu_j} f^j \pi_L^a) & \geq (-a-it-a-jt) + (i+j-p+1)t + p(t-t_0) + a = 0,
\end{aligned}
\]
so $\pi_K^{-\nu_i} \, f^i \cdot \pi_K^{-\nu_j} f^j \pi_L^a$ is integral. If $L/K$ is not almost-maximally ramified, then (again by Remark \ref{rmk:EGeqA}) we have $e \geq t-t_0+1$, which leads to $v_L(\pi_K^{-\nu_i} \, f^i \cdot \pi_K^{-\nu_j} f^j \pi_L^a) \geq p$, that is, this element lies in in $\pi_K \mathcal{O}_L$.
\end{proof}

\subsection{Description of the associated order}
\begin{theorem}\label{thm:StructureOfALK}
Assume $a \neq 0$. Then the associated order $\mathfrak{A}_{L/K}$ is a free $\mathcal{O}_K$-module with basis $\pi_K^{-n_i} f^i$ for $i=0,\ldots,p-1$, where
\[
n_i = \min_{0 \leq j \leq p-1-i} (\nu_{i+j} - \nu_j).
\]
\end{theorem}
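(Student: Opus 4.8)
The plan is to work throughout in the $\mathcal{O}_K$-basis $1, f, f^2, \ldots, f^{p-1}$ of $\mathcal{O}_K[G]$ provided by Proposition~\ref{prop:PropertiesOff}(1), and in the explicit $\mathcal{O}_K$-basis $\omega_j := \pi_K^{-\nu_j} f^j \pi_L^a$ (for $j=0,\ldots,p-1$) of $\mathcal{O}_L$ furnished by Theorem~\ref{thm:StructureOfStuff}, noting that $\omega_0 = \pi_L^a$ since $\nu_0 = \lfloor a/p\rfloor = 0$. Writing a general element of $K[G]$ as $\lambda = \sum_{i=0}^{p-1} c_i f^i$ with $c_i \in K$, the order $\mathfrak{A}_{L/K}$ consists exactly of those $\lambda$ with $\lambda\omega_j \in \mathcal{O}_L$ for every $j$. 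Expanding $\lambda\omega_j = \pi_K^{-\nu_j}\sum_{i} c_i f^{i+j}\pi_L^a$ and splitting the sum at $i+j=p$ gives
\[
\lambda\omega_j = \pi_K^{-\nu_j}\sum_{i=0}^{p-1-j} c_i f^{i+j}\pi_L^a + \pi_K^{-\nu_j}\sum_{i=p-j}^{p-1} c_i f^{i+j}\pi_L^a.
\]
In the first sum all exponents satisfy $i+j\le p-1$, so $f^{i+j}\pi_L^a = \pi_K^{\nu_{i+j}}\omega_{i+j}$ and, reindexing by $m=i+j$, the first sum becomes $\sum_{m=j}^{p-1} c_{m-j}\pi_K^{\nu_m-\nu_j}\omega_m$.

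The first thing I would extract is the case $j=0$: there the second sum is empty and $\lambda\omega_0 = \sum_i c_i\pi_K^{\nu_i}\omega_i \in \mathcal{O}_L$ forces $v_K(c_i)\ge -\nu_i$ for all $i$. Granting this preliminary bound, the second sum above is automatically integral for \emph{every} $j$: when $i+j\ge p$, Corollary~\ref{cor:iPlusjGreaterThanP} gives $\pi_K^{-\nu_i-\nu_j} f^{i+j}\pi_L^a \in \mathcal{O}_L$, hence $\pi_K^{-\nu_j} c_i f^{i+j}\pi_L^a = (\pi_K^{\nu_i}c_i)\cdot(\pi_K^{-\nu_i-\nu_j} f^{i+j}\pi_L^a)\in\mathcal{O}_L$ because $\pi_K^{\nu_i}c_i\in\mathcal{O}_K$. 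Therefore $\lambda\omega_j\in\mathcal{O}_L$ if and only if the first sum $\sum_{m=j}^{p-1} c_{m-j}\pi_K^{\nu_m-\nu_j}\omega_m$ lies in $\mathcal{O}_L = \bigoplus_m \mathcal{O}_K\omega_m$, which, since the $\omega_m$ form an $\mathcal{O}_K$-basis, is equivalent to $v_K(c_i)\ge \nu_j-\nu_{i+j}$ for all $i$ with $0\le i\le p-1-j$.

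Letting $j$ run over $0,\ldots,p-1$ and combining these conditions, I would conclude that $\lambda\in\mathfrak{A}_{L/K}$ if and only if $v_K(c_i)\ge \max_{0\le j\le p-1-i}(\nu_j-\nu_{i+j}) = -\min_{0\le j\le p-1-i}(\nu_{i+j}-\nu_j) = -n_i$ for every $i$; equivalently $\lambda\in\bigoplus_{i=0}^{p-1}\mathcal{O}_K\,\pi_K^{-n_i} f^i$. This proves both inclusions simultaneously, and since the $\pi_K^{-n_i} f^i$ are $K$-linearly independent it exhibits $\mathfrak{A}_{L/K}$ as a free $\mathcal{O}_K$-module with the claimed basis (the $n_i$ being well-defined nonnegative integers, as the minimum runs over a nonempty index set containing $j=0$ and the sequence $s\mapsto\nu_s$ is nondecreasing). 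The one genuinely delicate point is the handling of the terms $f^{i+j}\pi_L^a$ with $i+j\ge p$: a priori these might impose further constraints on the $c_i$, and expanding them through the relation $f^p = -\sum_{k=1}^{p-1}{p\choose k} f^k$ of Proposition~\ref{prop:PropertiesOff}(2) would be unwieldy. The crux is that Corollary~\ref{cor:iPlusjGreaterThanP}, together with the cheap bound $v_K(c_i)\ge-\nu_i$ coming from the case $j=0$, shows that these terms are always integral and hence contribute no new condition.
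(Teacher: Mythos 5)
Your proposal is correct and follows essentially the same route as the paper's proof: split $\lambda\omega_j$ at $i+j=p$, use the $j=0$ case to get the preliminary bound $v_K(c_i)\geq-\nu_i$, dispose of the $i+j\geq p$ terms via Corollary \ref{cor:iPlusjGreaterThanP}, and read off the conditions $v_K(c_i)\geq \nu_j-\nu_{i+j}$ from the remaining terms to obtain $v_K(c_i)\geq -n_i$. The only cosmetic difference is that you extract the coefficientwise conditions by expanding in the $\mathcal{O}_K$-basis $\pi_K^{-\nu_m}f^m\pi_L^a$ of Theorem \ref{thm:StructureOfStuff}, whereas the paper argues via the pairwise-distinct valuations modulo $p$ (no cancellation); these are equivalent.
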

\begin{proof}
Let $\lambda$ be an element of $K[G]$. As $1, f, \ldots, f^{p-1}$ is a $K$-basis of $K[G]$, we may write uniquely $\lambda = \sum_{i=0}^{p-1} c_i f^i$ with the $c_i$ in $K$. By Theorem \ref{thm:StructureOfStuff},  $\lambda\in\mathfrak{A}_{L/K}$ if and only if $\lambda(\pi_K^{-\nu_j} f^j \pi_L^a)$ is in $\mathcal{O}_L$ for every $j$.
Taking $j=0$ yields
$
\sum_{i=0}^{p-1} c_i f^i \pi_L^a \in \mathcal{O}_L,
$
and since $v_L(c_i f^i \pi_L^a) = pv_K(c_i) + a+it$ by Proposition \ref{prop:PropertiesOff} (3), we see that the valuations of the terms in the previous sum are pairwise distinct (since they are all different modulo $p$). In particular there can be no cancellation among them, hence $c_i f^i \pi_L^a \in \mathcal{O}_L$ holds for all $i=0,\ldots,p-1$. Using again that the valuation of $f^i\pi_L^a$ is $a+it$ we obtain $v_L(c_i) \geq -a-it$, from which it follows that $v_K(c_i) \geq -\nu_i$ is a necessary condition for $\lambda= \sum_{i=0}^{p-1} c_i f^i\in\mathcal{O}_L$. 
We claim that $\lambda=\sum_{i=0}^{p-1} c_i f^i$ is in $\mathfrak{A}_{L/K}$ if and only if  each summand $c_i f^i$ is. One implication is clear. On the other hand, we have that $\lambda\in \mathfrak{A}_{L/K}$ exactly when, for each $j$,
$$\left(\sum_{i=0}^{p-1} c_i f^{i}\right) \pi_K^{-\nu_j}f^j \pi_L^a\in \mathcal{O}_L.$$ 
We rewrite this last condition as
$$
 \sum_{i=0}^{p-1-j} c_i \pi_K^{-\nu_j} f^{i+j}\pi_L^a +\sum_{i=p-j}^{p-1} c_i \pi_K^{-\nu_j} f^{i+j} \pi_L^a
\in\mathcal{O}_L.
$$
Now, all the summands  $c_i \pi_K^{-\nu_j} f^{i+j} \pi_L^a$ with $i+j\ge p$, namely those of the second sum, are in $\mathcal{O}_L$ by Corollary \ref{cor:iPlusjGreaterThanP},  since $v_K(c_i) \geq -\nu_i$, so the condition reduces to 
\[
 \sum_{i=0}^{p-1-j} c_i \pi_K^{-\nu_j} f^{i+j} \pi_L^a \in \mathcal{O}_L.
\]
By Proposition \ref{prop:PropertiesOff} (3) we have $v_L(c_i \pi_K^{-\nu_j} f^{i+j} \pi_L^a) \equiv a+(i+j)t \equiv (i+j+1) a \pmod{p}$, namely, the valuations of the terms in the above sum are all distinct, and the only possibility for the sum to be integral is that every summand is itself integral. 
This shows that $c_if^i\cdot\pi_K^{-\nu_j}f^j\pi_L^a\in\mathcal{O}_L$,  for each $i$ and $j$, namely that $c_if^i$ must be in $\mathfrak{A}_{L/K}$.

Thus it remains to understand what elements of the form $c_i f^i$ are in $\mathfrak{A}_{L/K}$; equivalently, we require
$
v_L(c_i f^i \cdot \pi_K^{-\nu_j} f^j \pi_L^a) \geq 0$ for all $j=0,\ldots,p-1$. In turn, this is equivalent to
\[
p v_K(c_i) -p\nu_j+ v_L(f^{i+j} \pi_L^a) \geq 0 \; \text{ for all } j=0,\ldots,p-1.
\]
As observed above, for $i+j\ge p$ this is guaranteed by the condition $v_K(c_i) \geq -\nu_i$.
For $j=0,\ldots,p-1-i$, using Proposition \ref{prop:PropertiesOff} (3) we get
\[
 -p\nu_j+ (i+j)t+a \geq -p v_K(c_i) \Leftrightarrow  \nu_{i+j}-\nu_j \geq -v_K(c_i);
\]
since this needs to hold for every $j$, we obtain $v_K(c_i) \geq -n_i$. Putting everything together, we see that $\sum_{i=0}^{p-1} c_i f^i$ is in $\mathfrak{A}_{L/K}$ if and only if $v_K(c_i) \geq -n_i$ for every $i$, that is, $\mathfrak{A}_{L/K}$ is free over $\mathcal{O}_K$ with basis $\{\pi_K^{-n_i} f^i\}_{i=0,\ldots,p-1}$.
\end{proof}

\subsection{On the $\mathfrak{A}_{L/K}$-freeness of $\mathcal{O}_L$}
In this section we show how our approach leads to a new proof of Theorem \ref{thm:IntroFertonBertrandias}. Notice that if $a=0$ then $\mathcal{O}_L$ is free over $\mathfrak{A}_{L/K}$ by Corollary \ref{cor:AEqZeroOLIsAFree}, while if $L/K$ is not almost-maximally ramified, then $a \neq 0$ by Proposition \ref{prop:InequalitiesOneAndt}. Hence in what follows we can prove the theorem under the additional assumption that $a \neq 0$.
We aim to compare the two expressions
\begin{equation}\label{eq:MinimalIndexOverFreeSubmodule}
\sum_{i=0}^{p-1} \nu_i + \min_{0 \leq i \leq p-1} (ei -(p-1)\nu_i)
\end{equation}
and
\begin{equation}\label{eq:IndexOverAssociatedOrder}
\sum_{i=0}^{p-1} n_i,
\end{equation}
which, by Theorems \ref{thm:FormulaCyclicExtensionsDegreeP} and \ref{thm:StructureOfALK}, are the minimal index of $\mathcal{O}_L$ over a free $\mathcal{O}_K[G]$-submodule and the index $[\mathfrak{A}_{L/K} : \mathcal{O}_K[G]]$, respectively. As a consequence of Proposition \ref{productindex}, $\mathcal{O}_L$ is free over $\mathfrak{A}_{L/K}$ if and only if \eqref{eq:MinimalIndexOverFreeSubmodule} and \eqref{eq:IndexOverAssociatedOrder} are equal, so we are reduced to showing the following statement.
\begin{theorem}\label{thm:BertrandiasFertonReformulation}
Let $L/K$ be a degree-$p$ cyclic extension of $p$-adic fields with ramification jump $t$. Let $a$ be the residue class of $t$ modulo $p$. The following hold:
\begin{enumerate}
\item Suppose $a \mid p-1$. Then \eqref{eq:MinimalIndexOverFreeSubmodule} and \eqref{eq:IndexOverAssociatedOrder} are equal, so $\mathcal{O}_L$ is free over $\mathfrak{A}_{L/K}$.
\item Suppose $L/K$ is not almost-maximally ramified and that $\mathcal{O}_L$ is free over $\mathfrak{A}_{L/K}$, so that \eqref{eq:MinimalIndexOverFreeSubmodule} and \eqref{eq:IndexOverAssociatedOrder} are equal. Then $a \mid p-1$.
\end{enumerate}
\end{theorem}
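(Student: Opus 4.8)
The plan is to translate both quantities \eqref{eq:MinimalIndexOverFreeSubmodule} and \eqref{eq:IndexOverAssociatedOrder} into combinatorial expressions depending only on $p$, $a$ and $e$, and then compare them. Write $t = pt_0 + a$ and put $e' := e - (p-1)t_0$, so that $e' \geq a$ by Remark \ref{rmk:EGeqA}, with $e' \geq a+1$ exactly when $L/K$ is not almost-maximally ramified. Setting $w_i := \lfloor (i+1)a/p \rfloor$, one checks $\nu_i = it_0 + w_i$; by Theorem \ref{thm:StructureOfALK} this gives $n_i = it_0 + \tilde n_i$ with $\tilde n_i := \min_{0 \leq j \leq p-1-i}(w_{i+j} - w_j)$, and $\mu = \min_{0 \leq i \leq p-1}(ie' - (p-1)w_i)$. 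The common term $t_0 \binom{p}{2}$ on the two sides cancels, so that \eqref{eq:MinimalIndexOverFreeSubmodule}$\,=\,$\eqref{eq:IndexOverAssociatedOrder} if and only if $\sum_i w_i + \mu = \sum_i \tilde n_i$; and by Proposition \ref{productindex} the inequality $\sum_i w_i + \mu \geq \sum_i \tilde n_i$ always holds.

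For part (1), write $p-1 = ab$. One first checks that $w_i = \lfloor i/b \rfloor$ for all $0 \leq i \leq p-1$ (writing $i = bk + \rho$ with $0 \leq \rho < b$, one has $(i+1)a/p = k + (a(\rho+1)-k)/p$ with $0 \leq a(\rho+1)-k < p$). Superadditivity of the floor then gives $w_{i+j} - w_j \geq w_i$ with equality at $j = 0$, hence $\tilde n_i = w_i$ and $\sum_i \tilde n_i = \sum_i w_i$; moreover $\mu = 0$, since $(p-1)w_i = ab\lfloor i/b\rfloor \leq ai \leq e' i$ using $e' \geq a$, with equality at $i = 0$. Therefore $\sum_i w_i + \mu = \sum_i \tilde n_i$ and $\mathcal{O}_L$ is free over $\mathfrak{A}_{L/K}$.

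For part (2) I assume $e' \geq a+1$ and prove the contrapositive: if $a \nmid p-1$ then $\sum_i w_i + \mu > \sum_i \tilde n_i$. The first ingredient is the combinatorial identity $\nu_i - n_i = w_i - \tilde n_i \in \{0, 1\}$, where $w_i - \tilde n_i = 1$ precisely when there is some $j \in \{1, \dots, p-1-i\}$ with $r_i + r_j < a$, where $r_k := (k+1)a \bmod p$; this follows from the elementary identity $w_{i+j} - w_i - w_j = \lfloor (r_i + r_j - a)/p \rfloor \in \{-1, 0, 1\}$ (note that $r_{i+j} = (r_i + r_j - a) \bmod p$), together with the fact that $j = 0$ computes $\nu_i$ in the definition of $n_i$. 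Thus $\sum_i (w_i - \tilde n_i)$ is the number of indices $i$ admitting such a partner $j$. The second ingredient is the bound $-\mu = \max_i\bigl((p-1)w_i - ie'\bigr) \leq \max\bigl(0,\ \max_{1 \leq i \leq p-1}(a - i - w_i - r_i)\bigr)$, obtained from $(p-1)w_i = ia + a - r_i - w_i$ and $ie' \geq i(a+1)$; in particular $-\mu$ is controlled by the (few) indices $i \geq 1$ whose residue $r_i$ is small. It then remains to show that, when $a \nmid p-1$, the number of indices admitting a partner strictly exceeds this bound for $-\mu$.

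I expect this last step to be the main obstacle. It is a statement about how the residues $r_i = (i+1)a \bmod p$ are distributed and which of their pairwise sums fall below $a$, and the inequality must be delicate, since it genuinely fails for almost-maximally ramified extensions — which is exactly why the hypothesis $e' \geq a+1$ cannot be dropped. The natural framework is the continued fraction expansion of $a/p$: the condition $a \mid p-1$ amounts to this expansion having the shortest non-trivial form $[0; b, a]$, and when it is longer the theory of best rational approximations produces, beyond the index realising $-\mu$, further small residues $r_i$ and hence further indices admitting a partner — enough to tip the inequality. This is the same circle of ideas behind the continued-fraction criterion of \cite{MR296048}, and carrying it out amounts to a careful but elementary bookkeeping of convergents, in the style of \cite{MR543208}.
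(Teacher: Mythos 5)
Your part (1) is correct and is essentially the paper's own argument: the identity $w_i=\lfloor i/b\rfloor$ is Lemma \ref{lemma:CFLength2} (with $b$ playing the role of the paper's $k$), and the deductions $\tilde n_i=w_i$ and $\mu=0$ are the same computations. Your reduction of the whole theorem to comparing $\sum_i w_i+\mu$ with $\sum_i\tilde n_i$ (after cancelling $t_0\binom{p}{2}$) is sound, and so are your two ``ingredients'' for part (2): the identity $w_{i+j}-w_i-w_j=\lfloor(r_i+r_j-a)/p\rfloor$ and the bound $-\mu\le\max\bigl(0,\max_{1\le i\le p-1}(a-i-w_i-r_i)\bigr)$. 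The genuine gap is in part (2): the entire content of the statement has been pushed into the final counting claim, namely that for $a\nmid p-1$ and $e'\ge a+1$ the number of indices $i$ admitting a partner $j$ with $r_i+r_j<a$ strictly exceeds $-\mu$, and this claim is not proved. You flag it yourself as ``the main obstacle'' and appeal to continued-fraction bookkeeping in the style of \cite{MR543208}, but no such argument is supplied, and it cannot be waved through: as you note, the inequality genuinely fails for almost-maximally ramified extensions (where $\mathcal{O}_L$ can be $\mathfrak{A}_{L/K}$-free with $a\nmid p-1$), so any proof must exploit $e'\ge a+1$ quantitatively. As written, part (2) is a reduction plus an unproved combinatorial conjecture; in particular your route still has to exclude the scenario in which several terms $w_i-\tilde n_i=1$ exactly compensate a positive $-\mu$, which is precisely the delicate case.

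The paper avoids this aggregate comparison altogether and argues termwise, which you may prefer to adopt. Freeness is used directly: if $\beta$ generates $\mathcal{O}_L$ over $\mathfrak{A}_{L/K}$, the matrix of the lattice $\mathfrak{A}_{L/K}\beta$ in the basis $\{\pi_K^{-\nu_i}f^i\pi_L^a\}$ is, modulo $\pi_K$, lower triangular with diagonal entries $\pi_K^{\nu_i-n_i}d_0$; this is where the hypothesis of not being almost-maximally ramified enters, through the sharpened form of Corollary \ref{cor:iPlusjGreaterThanP}, which makes the wrap-around terms $f^{i+j}$ with $i+j\ge p$ vanish modulo $\pi_K$. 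Invertibility of that matrix forces $n_i=\nu_i$ for \emph{every} $i$ separately, i.e.\ superadditivity $\tilde\nu_{i+j}\ge\tilde\nu_i+\tilde\nu_j$, a much stronger and more tractable conclusion than equality of the two sums. From there, choosing $k$ with $ka<p\le(k+1)a$ one gets the reverse bound $\tilde\nu_{i+k}\le\tilde\nu_i+1$, hence $\tilde\nu_i=\lfloor i/k\rfloor$ for all $i\le p-1$, and evaluating at $i=p-1$ and $i=ak$ yields $ak=p-1$, i.e.\ $a\mid p-1$. To complete your write-up you must either carry out your continued-fraction count in full or switch to this termwise argument.
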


\subsubsection{Sufficiency}
In this section we assume that $a \mid p-1$ and show that \eqref{eq:MinimalIndexOverFreeSubmodule} and \eqref{eq:IndexOverAssociatedOrder} are equal, thus proving (1) in Theorem \ref{thm:BertrandiasFertonReformulation}. 
We start with the following arithmetical lemma:
\begin{lemma}\label{lemma:CFLength2}
Assume that $a \mid p-1$ and set $k=\frac{p-1}{a}$. Then
$
\nu_i = it_0 + \left\lfloor \frac{i}{k} \right\rfloor
$
for $i=0,\ldots,p-1$.
\end{lemma}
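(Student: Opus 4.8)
The plan is to reduce the claimed identity to an elementary statement about floor functions. Recall from Definition~\ref{def:nui} that $\nu_i = \left\lfloor \frac{it+a}{p} \right\rfloor$, and that we are writing $t = pt_0 + a$. Substituting this in gives
\[
\nu_i = \left\lfloor \frac{i(pt_0+a)+a}{p} \right\rfloor = it_0 + \left\lfloor \frac{(i+1)a}{p} \right\rfloor,
\]
so the lemma is equivalent to the assertion that $\left\lfloor \frac{(i+1)a}{p} \right\rfloor = \left\lfloor \frac{i}{k} \right\rfloor$ for $0 \leq i \leq p-1$. Since $a \mid p-1$ and $k = \frac{p-1}{a}$, we have $p = ka+1$, so what must be proved is that $\left\lfloor \frac{(i+1)a}{ka+1} \right\rfloor = \left\lfloor \frac{i}{k} \right\rfloor$ in this range.

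To do this I would write $i = qk + r$ with $0 \leq r \leq k-1$, so that $\left\lfloor \frac{i}{k} \right\rfloor = q$; the constraint $i \leq p-1 = ka$ then forces $qk \leq i \leq ka$, hence $q \leq a$. It now suffices to verify the two inequalities $q(ka+1) \leq (i+1)a$ and $(i+1)a < (q+1)(ka+1)$. Expanding $(i+1)a = qka + (r+1)a$, the first reduces to $q \leq (r+1)a$, which holds because $q \leq a \leq (r+1)a$; the second reduces to $(r+1)a < q + ka + 1$, which holds because $(r+1)a \leq ka$ (as $r \leq k-1$) while $q + ka + 1 > ka$. Together these give $\left\lfloor \frac{(i+1)a}{ka+1} \right\rfloor = q = \left\lfloor \frac{i}{k} \right\rfloor$, which is the lemma.

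The argument is entirely elementary, so I do not expect a genuine obstacle: it is just bookkeeping with the Euclidean division $i = qk + r$ and the relation $p = ka+1$. The only point that requires a little care is the use of the hypothesis $0 \leq i \leq p-1$, which is precisely what delivers the bound $q \leq a$ on which the lower inequality depends.
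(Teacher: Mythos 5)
Your proof is correct. The first reduction, $\nu_i = it_0 + \left\lfloor \frac{(i+1)a}{p} \right\rfloor$, is exactly the one the paper makes; the difference lies in how the remaining floor identity $\left\lfloor \frac{(i+1)a}{p} \right\rfloor = \left\lfloor \frac{i}{k} \right\rfloor$ is verified. The paper argues by induction on $i$ in steps of $k$, using $ka = p-1$ to shift the numerator by $p-1$ and then checking that $(i-k+1)a$ is never divisible by $p$ so that the floor genuinely increases by $1$ at each step. You instead perform the Euclidean division $i = qk+r$ and check directly the two inequalities $q(ka+1) \leq (i+1)a < (q+1)(ka+1)$, with the hypothesis $i \leq p-1 = ka$ supplying the bound $q \leq a$ needed for the lower inequality. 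Your route is non-inductive and sidesteps the divisibility check entirely, at the cost of slightly more explicit bookkeeping; both arguments are elementary and of comparable length, and all your inequalities (including the boundary case $i = ka$, where $q = a$, $r = 0$ and the lower bound holds with equality) check out.
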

\begin{proof} By definition we have $\nu_i = \left\lfloor \frac{i(pt_0+a)+a}{p} \right\rfloor = it_0 + \left\lfloor \frac{(i+1)a}{p} \right\rfloor$, so we have to prove that $\left\lfloor \frac{(i+1)a}{p} \right\rfloor=\left\lfloor \frac{i}{k} \right\rfloor$. We establish this by induction on $i$. For $i \leq k-1$ we have $(i+1)a \leq ka = p-1$, and $\left\lfloor \frac{(i+1)a}{p} \right\rfloor=0$. Now let $i \geq k$: we have
\[
\left\lfloor \frac{(i+1)a}{p} \right\rfloor = \left\lfloor \frac{(i-k+1)a + ka}{p} \right\rfloor = \left\lfloor \frac{(i-k+1)a + (p-1)}{p} \right\rfloor,
\]
which is  easily seen to be equal to $\left\lfloor \frac{(i-k+1)a}{p} \right\rfloor + 1$, provided that $(i-k+1)a$ is not divisible by $p$.

Since $a<p$ and $i-k+1 \leq p-k < p$ we never have $(i-k+1)a \equiv 0 \pmod p$, so we obtain
\[
\left\lfloor \frac{(i+1)a}{p} \right\rfloor = \left\lfloor \frac{(i-k+1)a}{p} \right\rfloor + 1 = \left\lfloor \frac{i-k}{k} \right\rfloor +1 = \left\lfloor \frac{i}{k} \right\rfloor
\]
as claimed.
\end{proof}

\begin{lemma}
Assume that $a \mid p-1$. Then $\min_{0 \leq i \leq p-1} (ei -(p-1)\nu_i)=0$ and $\nu_i =n_i$ for all $i=0,\ldots,p-1$. In particular, \eqref{eq:MinimalIndexOverFreeSubmodule} and \eqref{eq:IndexOverAssociatedOrder} are equal.
\end{lemma}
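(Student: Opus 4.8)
The plan is to reduce both assertions of the lemma to the explicit formula $\nu_i = it_0 + \left\lfloor i/k \right\rfloor$ supplied by Lemma~\ref{lemma:CFLength2} (recall $k = \frac{p-1}{a} \in \mathbb{Z}$ since $a \mid p-1$), together with the bound $e \geq a + (p-1)t_0$ from Remark~\ref{rmk:EGeqA} and elementary properties of the floor function.

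First I would treat $\mu := \min_{0 \leq i \leq p-1}(ei - (p-1)\nu_i)$. Since $\nu_0 = \left\lfloor a/p \right\rfloor = 0$, the value at $i = 0$ is $0$, so $\mu \leq 0$; it remains to check $ei - (p-1)\nu_i \geq 0$, i.e. $\nu_i \leq \frac{ei}{p-1}$, for every $i$. From Remark~\ref{rmk:EGeqA} we get $\frac{e}{p-1} \geq \frac{a}{p-1} + t_0 = \frac{1}{k} + t_0$, and multiplying by $i$ and using $\frac{i}{k} \geq \left\lfloor i/k \right\rfloor$ yields $\frac{ei}{p-1} \geq it_0 + \left\lfloor i/k \right\rfloor = \nu_i$. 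Hence $\mu = 0$.

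Next I would prove $\nu_i = n_i$, where $n_i = \min_{0 \leq j \leq p-1-i}(\nu_{i+j} - \nu_j)$. Taking $j = 0$ gives $n_i \leq \nu_i - \nu_0 = \nu_i$, so it suffices to establish the superadditivity $\nu_{i+j} \geq \nu_i + \nu_j$ for $0 \leq i,j$ with $i+j \leq p-1$. Substituting the formula of Lemma~\ref{lemma:CFLength2}, the linear terms $(i+j)t_0$ cancel on both sides and the inequality becomes $\left\lfloor (i+j)/k \right\rfloor \geq \left\lfloor i/k \right\rfloor + \left\lfloor j/k \right\rfloor$, which is the standard superadditivity of $\left\lfloor \cdot \right\rfloor$. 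Therefore $n_i = \nu_i$ for all $i = 0,\ldots,p-1$.

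Combining these two facts, \eqref{eq:MinimalIndexOverFreeSubmodule} equals $\sum_{i=0}^{p-1}\nu_i + \mu = \sum_{i=0}^{p-1}\nu_i$, while \eqref{eq:IndexOverAssociatedOrder} equals $\sum_{i=0}^{p-1}n_i = \sum_{i=0}^{p-1}\nu_i$, so the two coincide. I do not expect a genuine obstacle here: once Lemma~\ref{lemma:CFLength2} is in hand the argument is a short direct computation. The only points requiring mild care are verifying that the minimum defining $\mu$ is actually \emph{attained} at $i=0$ (which the evaluation there confirms, not merely bounded), and checking that the range $0 \leq j \leq p-1-i$ in the definition of $n_i$ is precisely the range over which the floor superadditivity is being applied.
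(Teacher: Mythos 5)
Your proof is correct and follows essentially the same route as the paper: both use the explicit formula $\nu_i = it_0 + \lfloor i/k\rfloor$ from Lemma~\ref{lemma:CFLength2} together with $e \geq a+(p-1)t_0$ to get $ei-(p-1)\nu_i\geq 0$ (with equality at $i=0$), and both reduce $\nu_i = n_i$ to the superadditivity of the floor function. The only difference is cosmetic (you divide through by $p-1$ where the paper multiplies out), so there is nothing to add.
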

\begin{proof}
We need to prove that $ei - (p-1) \nu_i \geq 0$ for all $i=0,\ldots,p-1$ (notice that equality holds for $i=0$): this can be seen by writing $k=\frac{p-1}{a}$ and using the inequality $e \geq a +(p-1)t_0$ (Remark \ref{rmk:EGeqA}) and Lemma \ref{lemma:CFLength2}, that yield
\[
ei - (p-1) \nu_i \geq (a +(p-1)t_0)i -  (p-1) \left( \left\lfloor\frac{i}{k}\right\rfloor + it_0 \right) \geq ai - \frac{(p-1)i}{k} = ai - ai=0.
\]
We now turn to the equality $\nu_i=n_i$. Notice that $n_i$ is defined as the minimum of several quantities, one of which is $\nu_i-\nu_0=\nu_i$, so we certainly have $n_i \leq \nu_i$. As for the opposite inequality, we need to prove that $\nu_{i+j} - \nu_j \geq \nu_i$, that is, using Lemma \ref{lemma:CFLength2} again,
\[
(i+j)t_0 + \left\lfloor \frac{i+j}{k} \right\rfloor \geq it_0 + \left\lfloor \frac{i}{k} \right\rfloor + jt_0 + \left\lfloor \frac{j}{k} \right\rfloor,
\]
which is obvious.
\end{proof}

\subsubsection{Necessity}
We now prove that if $\mathcal{O}_L$ is $\mathfrak{A}_{L/K}$-free, and the extension is not almost-maximally ramified, then $a \mid p-1$. %
Let $0 \leq i,j \leq p-1$ be such that $i+j \geq p$. By Corollary \ref{cor:iPlusjGreaterThanP} and the obvious inequality $n_i \leq \nu_i$ we see that $\pi_K^{-n_i} \, f^i \cdot \pi_K^{-\nu_j} f^j \pi_L^a$ is in $\pi_K \mathcal{O}_L$. Now start with $\beta \in \mathcal{O}_L$, which we assume to generate $\mathcal{O}_L$ over $\mathfrak{A}_{L/K}$ and which we represent as a vector $\begin{pmatrix}
d_0 \\ d_1 \\ \vdots \\ d_{p-1}
\end{pmatrix}$ of coordinates in the basis $\pi_K^{-\nu_i} f^i \pi_L^a$ of $\mathcal{O}_L$. We consider the $\mathcal{O}_K$-lattice $\mathfrak{A}_{L/K} \beta$ in $L$, namely, the free $\mathcal{O}_K$-module spanned by
$
\pi_K^{-n_i} \, f^i \beta
$ for $i=0,\ldots,p-1$.
Using the fact that $\pi_K^{-n_i-\nu_j}f^{i+j} \pi_L^a$ lies in $\pi_K \mathcal{O}_L$ for $i+j \geq p$ 
we obtain
\[
\begin{aligned}
\pi_K^{-n_i} f^i \beta = \pi_K^{-n_i} f^i  \sum_{j=0}^{p-1} d_j \pi_K^{-\nu_j} f^j \pi_L^a
\equiv \sum_{k=i}^{p-1} \left(d_{k-i} \pi_K^{-n_i-\nu_{k-i}+\nu_{k}} \right) \pi_K^{-\nu_{k}} f^{k} \pi_L^a \pmod{\pi_K}.
\end{aligned}
\]
The matrix of the lattice $\mathfrak{A}_{L/K} \beta$, expressed with respect to the basis $\{\pi_K^{-\nu_i}f^i\pi_L^a \}$ of $\mathcal{O}_L$, is then congruent modulo $\pi_K$ to
\[
\begin{pmatrix}
d_0		& 0							& 0 & \cdots & 0 \\
d_1		& \pi_K^{-n_1+\nu_{1}} d_0	& 0 & \cdots & 0  \\
\vdots	& \vdots	 & 				  \pi_K^{-n_2+\nu_{2}} d_0 & \cdots & \vdots \\
d_{p-1}	&							& \vdots & & \pi_K^{-n_{p-1}+\nu_{p-1}} d_0
\end{pmatrix}.
\]
The assumption $\mathfrak{A}_{L/K} \beta=\mathcal{O}_L$ implies that the above matrix must be invertible. On the other hand, it is clear that its determinant is congruent modulo $\pi_K$ to $d_0^p \cdot \pi_K^{\sum_{i=1}^{p-1}  (\nu_i-n_i) }$, so it is invertible if and only if this quantity has valuation $0$. Since $v_K(d_0) \geq 0$ and $\nu_i-n_i \geq 0$ for all $i$, this happens if and only if $d_0$ is a $\pi_K$-adic unit and $\nu_i=n_i$ for $i=1,\ldots,p-1$, so that in particular we must have
\[
n_i = \min_{0 \leq j \leq p-1-i} (\nu_{i+j} - \nu_j) = \nu_i,
\]
which implies %
$
\nu_{i+j} - \nu_j \geq \nu_i$ for all $i,j$ such that $i+j \leq p-1$. Set for simplicity $\tilde{\nu}_i := \nu_i - it_0 = \left\lfloor \frac{a(i+1)}{p} \right\rfloor$ and notice that the previous inequality is equivalent to
\begin{equation}\label{eq:nuisuperadditive}
\tilde{\nu}_{i+j} - \tilde{\nu}_j \geq \tilde{\nu}_i \quad \text{for all }i,j\text{ such that }i+j \leq p-1.
\end{equation}
Let $k$ be such that $ka<p\le (k+1)a$; clearly $0\le k\le p-1$ and $a(k+1)=p+r$ with  $r < a$, so that  $\tilde{\nu}_{k-1}=0$ and $\tilde{\nu}_k=1$. 
For $i\ge0$ we have
$$\tilde{\nu}_{i+k}=\left\lfloor \frac{ (i+k+1)a }{p} \right\rfloor =\left\lfloor \frac{ ia+r+p }{p} \right\rfloor\le\left\lfloor \frac{ (i+1)a }{p} \right\rfloor+1=\tilde{\nu}_{i}+\tilde{\nu}_{k},$$
which, together with Equation \eqref{eq:nuisuperadditive}, shows that $\tilde{\nu}_{i+k} = \tilde{\nu}_i+\tilde{\nu}_k$ for each $i \leq p-1-k$. An immediate induction then implies  $\tilde{\nu}_i = \left\lfloor \frac{i}{k} \right\rfloor$ for all $i \leq p-1$.
 Setting $i=p-1$ we then obtain
\[
a = \tilde{\nu}_{p-1}= \left\lfloor \frac{p-1}{k} \right\rfloor \Rightarrow a \leq \frac{p-1}{k},
\]
hence in particular $ak \leq p-1$. We can then set $i=ak \leq p-1$ to obtain
\[
a = \left\lfloor \frac{ak}{k} \right\rfloor= \tilde{\nu}_{ak} = \left\lfloor \frac{(ak+1)a}{p} \right\rfloor \Rightarrow a \leq \frac{(ak+1)a}{p} \Rightarrow p \leq ak+1 \Rightarrow ak \geq p-1,
\]
hence $ak=p-1$ and $a \mid p-1$ as desired. This concludes the proof of Theorem \ref{thm:BertrandiasFertonReformulation} (2).

\renewcommand{\bibfont}{\small}

\bibliographystyle{plainnatabbrv}
\bibliography{biblio}
 

\end{document}